\documentclass[a4paper,10pt]{amsart}
\usepackage{graphicx}
\usepackage{subfigure}
\usepackage[utf8]{inputenc}
\usepackage{amsmath,amssymb}
\usepackage{verbatim}
\usepackage[colorlinks=true]{hyperref}
\usepackage[usenames,dvipsnames]{color}
\usepackage{xspace}

\usepackage{tikz}
\usetikzlibrary{snakes,trees,calc}

\newtheorem{thm}{Theorem}[section]
\newtheorem{prop}[thm]{Proposition}
\newtheorem{lemma}[thm]{Lemma}
\newtheorem{cor}[thm]{Corollary}
\newtheorem{conj}[thm]{Conjecture}

\theoremstyle{definition}
\newtheorem{defi}[thm]{Definition}

\newcommand{\ie}{\emph{i.e.}\xspace}
\newcommand{\Ie}{\emph{I.e.}\xspace}

\newcommand{\si}{\sigma}
\newcommand{\C}{C^{-1}}

\newcommand{\ai}{a}
\newcommand{\aj}{b}
\newcommand{\ak}{c}
\newcommand{\at}{{b\bullet c}}
\newcommand{\bi}{\alpha}
\newcommand{\bj}{\beta}
\newcommand{\bk}{\gamma}
\newcommand{\bt}{{\beta\circ \gamma}}
\newcommand{\zh}{{\color{orange} h}}
\newcommand{\zi}{{\color{red} i}}
\newcommand{\zj}{j}
\newcommand{\zk}{{\color{blue}k}}
\newcommand{\zl}{{\color{green}l}}
\newcommand{\wh}{{\color{orange}\delta}}
\newcommand{\wi}{{\color{red} \epsilon}}
\newcommand{\wj}{\zeta}
\newcommand{\wk}{{\color{blue}\eta}}
\newcommand{\wl}{{\color{green}\theta}}
\newcommand{\q}[2]{q_{#1 #2}}
\newcommand{\um}[2]{1_{#1 #2}}

\title[Polynomials counting FPL configurations at negative values]{On some polynomials enumerating Fully Packed Loop configurations, evaluation at negative values}
\author{Tiago Fonseca}
\address{Centre de Recherches Mathématiques, Université de Montréal}
\email{tiago.fonseca@lapth.cnrs.fr}
\begin{document}

\tikzstyle{ASM}=[black]
\tikzstyle{arche} = [red, thick]
\tikzstyle{line} = [black, semithick]
\tikzstyle{ajuda} = [gray, very thin]
\tikzstyle{dyck} = [black, thick]
\tikzstyle{young} = [black, semithick]
\tikzstyle{FPL}=[very thick,blue,rounded corners=3pt]
\tikzstyle{zFace}=[fill=yellow!50,draw=white,very thin]
\tikzstyle{xFace}=[fill=orange!30,draw=white,very thin]
\tikzstyle{yFace}=[fill=blue!30,draw=white,very thin]
\tikzstyle{nilp}=[blue,thick]
\tikzstyle{nilP}=[red,thick,dotted]
\tikzstyle{Nilp}=[blue,inner sep=0pt,minimum size=2pt,fill,shape=circle]
\tikzstyle{NilP}=[red,inner sep=0pt,minimum size=2pt,fill,shape=circle]

\begin{abstract}

In this article, we are interested in the enumeration of Fully Packed Loop configurations on a grid with a given noncrossing matching. 
These quantities also appear as the groundstate components of the $O(n)$ Loop model as conjectured by Razumov and Stroganov and recently proved by Cantini and Sportiello.

When considering matchings with $p$ nested arches these quantities are known to be polynomials. 
In a recent article, Fonseca and Nadeau conjectured some unexpected properties of these polynomials, suggesting that these quantities could be combinatorially interpreted even for negative $p$.
Here, we prove some conjectures in this article.
Notably, we prove that for negative $p$ we can factor the polynomials into two parts a ``positive'' one and a ``negative'' one. 
Also, a sum rule of the negative part is proven here. 

\end{abstract}

\maketitle


 \section*{Introduction}

In 2001, Razumov and Stroganov~\cite{RS-conj} conjectured that there is a correspondence between the Fully Packed Loop (FPL) configurations, a combinatorial model, and the components of the groundstate vector in the $O(n)$ Loop model, a model in statistical physics.
On the one hand, the connectivity of the FPL configurations at the boundary is described by a perfect noncrossing matchings $\pi$ of $2n$ points (see definitions~\ref{representations}).
The number of FPL configurations associated to a certain matching $\pi$ is denoted by $A_\pi$.
On the other hand, the $O(n)$ model is defined on the set of matchings and the groundstate components are naturally indexed by the matchings and are written $\psi_\pi$.
Razumov and Stroganov conjectured that this quantities are the same $A_\pi = \psi_\pi$ for all matchings $\pi$.
This conjecture was proved in 2010 by Cantini and Sportiello~\cite{ProofRS}.

Consider matchings with $p$ nested arches surrounding a smaller matching $\pi$, which we denote $(\pi)_p = (\cdots(\pi)\cdots)$. 
It was conjectured in~\cite{Zuber-conj}, and after proved in~\cite{CKLN,artic47}, that the quantities $A_{(\pi)_p}$ and $\psi_{(\pi)_p}$ are polynomials in $p$.
In a recent article, Nadeau and Fonseca~\cite{negative} conjectured some surprising properties of these polynomials.
The goal of this article is to prove some of these conjectures, notably 3.8 and 3.11.

\medskip

Let $\pi$ be a matching composed by $n$ arches.
We denote by $A_\pi(p)$ (respectively $\psi_\pi(p)$) the polynomial which coincides with $A_{(\pi)_p}$ (respectively $\psi_{(\pi)_p}$) when $p$ is a nonnegative integer.
In this paper we prove that, for $p$ between $0$ and $-n$, these quantities are either zero or they can be seen as the product of two distinct terms. 
One being a new quantity $g_\pi$ also indexed by perfect noncrossing matchings and the other being again the quantities $A_\pi$.

The quantities $g_\pi$ are surprisingly connected with the Fully Packed Loop model: the sum of the absolute values of $g_\pi$ is equal to the number of FPL configurations.
This relation has been proven in~\cite{tese}.
Here we prove another sum rule also conjectured in~\cite{negative}: the sum of the quantities $g_\pi$ is equal to the number of vertically symmetric FPL configurations, up to an eventual sign. 
These sum rules, together with other properties of $g_\pi$, rises the idea that these numbers $g_\pi$ have some combinatorial meaning, \ie they are counting something which is related to the FPL configurations.

An interesting byproduct of the proofs are the multivariate integral formulæ proposed for $g_\pi$, which allow us to reformulate the first mentioned conjecture in a stronger form (a polynomial form).

 \medskip

Let us give a detailed outline of this article. 
In the first section, we introduce the two models: the Fully Packed Loop (FPL) model and the $O(n)$ Loop model, and the associated quantities $A_\pi$ and $\psi_\pi$, respectively.
Furthermore, we give a brief perspective of the case of $\pi$ when it contains $p$ nested arches.

In Section~\ref{sec:conj} we state the two conjectures that we solve here.
They concern the polynomials $A_\pi (t)$.

In order to prove the first one, we introduce a multivariate polynomial version of the quantities $\psi_\pi$ in Section~\ref{sec:CPL_multi}, defined though the quantum Knizhnik--Zamolodchikov equation.
Although it seems a more complicated approach, this version allows us to use some polynomial properties, which will be essential to the proof.

The two further sections are dedicated to the proof of the conjectures.
The paper finishes with an appendix, where we describe some important results, which are straightforward but a little bit tedious.


\section{Definitions}\label{sec:defi}

In this section we introduce the concept of matchings. 
Furthermore, we briefly describe the Fully Packed Loop model and the $O(n)$ Loop model.
Finally, we introduce the concept of nested matching.

\subsection{Matchings}\label{representations}
A matching\footnote{these matchings are usually called {\em perfect noncrossing matchings} in the literature, but this is the only kind of matchings we will encounter so there will be no possible confusion.} $\pi$ of size $n$ is defined as a set of $n$ disjoint pairs of integers $\{1,\ldots,2n\}$, which are {\em noncrossing} in the sense that if $\{i,j\}$ and $\{k,l\}$ are two pairs in $\pi$ with $i<j$ and $k<l$, then it is forbidden to have $i<k<j<l$ or $k<i<l<j$. 
The number of matchings with $n$ pairs is the Catalan number $c_n=\frac{1}{n+1}\binom{2n}{n}$.

Matchings can be represented in several ways:

\begin{itemize}
 \item A Link Pattern is a set of noncrossing arches on $2n$ horizontally aligned points labelled from $1$ to $2n$. 
       Given a pair in a matching $\{i,j\}$, the corresponding arch connects point $i$ to the point $j$. 
       This will be our standard representation;
\[
\{\{1,2\},\{3,6\},\{4,5\}\}
\Leftrightarrow
 \begin{tikzpicture}[scale=0.25]
  \draw[arche] (0,0) .. controls (0,.5) and (1,.5) .. (1,0);
  \draw[arche] (2,0) .. controls (2,1.5) and (5,1.5) .. (5,0); 
  \draw[arche] (3,0) .. controls (3,.5) and (4,.5) .. (4,0);
  \draw[line] (-.5,0) -- (5.5,0);
 \end{tikzpicture}
\]

 \item A well-formed sequence of parentheses, also called \emph{parenthesis word}. 
Given an arch in a matching, the point connected to the left (respectively to the right) is encoded by an opening parenthesis (resp. by a closing parenthesis);
\[
 \begin{tikzpicture}[scale=0.25]
  \draw[arche] (0,0) .. controls (0,.5) and (1,.5) .. (1,0);
  \draw[arche] (2,0) .. controls (2,1.5) and (5,1.5) .. (5,0); 
  \draw[arche] (3,0) .. controls (3,.5) and (4,.5) .. (4,0);
  \draw[line] (-.5,0) -- (5.5,0);
 \end{tikzpicture}
\Leftrightarrow ()(())
\]

 \item A Dyck Path, which is a path between $(0,0)$ and $(2n,0)$ with steps NE $(1,1)$ and SE $(1,-1)$ that never goes under the horizontal line $y=0$. 
An opening parenthesis corresponds to a NE step, and a closing one to a SE step;
\[
 ()(()) \Leftrightarrow
 \begin{tikzpicture}[scale=0.25, baseline=2pt]
  \draw[dyck] (0,0) -- (1,1) -- (2,0) -- (3,1) -- (4,2) -- (5,1) -- (6,0);
 \end{tikzpicture}
\]

 \item A Young diagram is a collection of boxes, arranged in left-justified rows, such that the size of the rows is weakly decreasing from top to bottom.
Matchings with $n$ arches are in bijection with Young diagrams such that the $i$th row from the top has no more than $n-i$ boxes. 
The Young diagram can be constructed as the complement of a Dyck path, rotated $45^\circ$ counterclockwise;
\[
 \begin{tikzpicture}[scale=0.25, baseline=3pt]
   \draw[dyck] (0,0) -- (1,1) -- (2,0) -- (3,1) -- (4,2) -- (5,1) -- (6,0);
   \draw[young, dotted] (1,1) -- (3,3);
   \draw[young, dotted] (2,0) -- (4,2);
   \draw[young, dotted] (1,1) -- (2,0);
   \draw[young, dotted] (2,2) -- (3,1);
   \draw[young, dotted] (3,3) -- (4,2);
 \end{tikzpicture}
\Leftrightarrow
 \begin{tikzpicture}[scale=0.25, baseline=-10pt]
   \draw[young] (0,0) -- (0,-2);
   \draw[young] (1,0) -- (1,-2);
   \draw[young] (0,0) -- (1,0);
   \draw[young] (0,-1) -- (1,-1);
   \draw[young] (0,-2) -- (1,-2);
 \end{tikzpicture}
\]

\item A sequence $a=\{a_1,\ldots,a_n\}\subseteq\{1,\ldots,2n\}$, such that $a_{i-1}<a_i$ and $a_i\leq 2i-1$ for all $i$. 
Here $a_i$ is the position of the $i$th opening parenthesis.
\[
 ()(()) \Leftrightarrow \{1,3,4\}
\]
\end{itemize}

We will often identify matchings under those different representations, through the bijections explained above. 
We may need at times to stress a particular representation: thus we write $Y(\pi)$ for the Young diagram associated to $\pi$, and $a(\pi)$ for the increasing sequence associated to $\pi$, etc...

We will represent $p$ nested arches around a matching $\pi$  by ``$(\pi)_p$'', and $p$ consecutive small arches by ``$()^p$''; thus for instance 
\[
((((()()))))()()()=(()^2)_4()^3.
\]

We define a {\em partial order} on matchings as follows: $\si \preceq \pi$ if the Young diagram of $\pi$ contains the Young diagram of $\si$, that is $Y(\si)\subseteq Y(\pi)$. 
In the Dyck path representation, this means that the path corresponding to $\si$ is always weakly above the path corresponding to $\pi$; in the sequence representation, if we write $a=a(\si)$ and $a'=a(\pi)$, then this is simply expressed by $a_i\leq a'_i$ for all $i$.

Given a matching $\pi$, we define $d(\pi)$ as the total number of boxes in the Young diagram $Y(\pi)$. 
We also let $\pi^*$ be the conjugate matching of $\pi$, defined by: $\{i,j\}$ is an arch in $\pi^*$ if and only if $\{2n+1-j,2n+1-i\}$ is an arch in $\pi$.
This corresponds to a mirror symmetry of the parenthesis word, and a transposition in the Young diagram. 
We also define a natural {\em rotation} $r$ on matchings: $i,j$ are linked by an arch in $r(\pi)$ if and only if $i+1,j+1$ are linked in $\pi$ (where indices are taken modulo $2n$). 
These last two notions are illustrated on Figure~\ref{fig:matchings}.

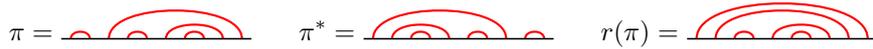
\begin{figure}[!ht]
\begin{align*}
\pi&=
 \begin{tikzpicture}[scale=0.25]
  \draw[arche] (0,0) .. controls (0,.5) and (1,.5) .. (1,0);
  \draw[arche] (2,0) .. controls (2,2) and (9,2) .. (9,0); 
  \draw[arche] (3,0) .. controls (3,.5) and (4,.5) .. (4,0);
  \draw[arche] (5,0) .. controls (5,1) and (8,1) .. (8,0);
  \draw[arche] (6,0) .. controls (6,.5) and (7,.5) .. (7,0);
  \draw[line] (-.5,0) -- (9.5,0);
 \end{tikzpicture}
 &
 \pi^*&=
 \begin{tikzpicture}[scale=0.25]
  \draw[arche] (9,0) .. controls (9,.5) and (8,.5) .. (8,0);
  \draw[arche] (7,0) .. controls (7,2) and (0,2) .. (0,0); 
  \draw[arche] (6,0) .. controls (6,.5) and (5,.5) .. (5,0);
  \draw[arche] (4,0) .. controls (4,1) and (1,1) .. (1,0);
  \draw[arche] (3,0) .. controls (3,.5) and (2,.5) .. (2,0);
  \draw[line] (-.5,0) -- (9.5,0);
 \end{tikzpicture}
 &
 r(\pi)&=
 \begin{tikzpicture}[scale=0.25]
  \draw[arche] (0,0) .. controls (0,2.5) and (9,2.5) .. (9,0);
  \draw[arche] (1,0) .. controls (1,2) and (8,2) .. (8,0); 
  \draw[arche] (2,0) .. controls (2,.5) and (3,.5) .. (3,0);
  \draw[arche] (4,0) .. controls (4,1) and (7,1) .. (7,0);
  \draw[arche] (5,0) .. controls (5,.5) and (6,.5) .. (6,0);
  \draw[line] (-.5,0) -- (9.5,0);
 \end{tikzpicture}
\end{align*}
\caption{A matching, its conjugate, and the rotated matching.\label{fig:matchings}}
\end{figure}

We need additional notions related to the Young diagram representation. 
So let $Y$ be a young diagram, and $u$ one of its boxes. 
The {\em hook length} $h(u)$ is the number of boxes below $u$ in the same column, or to its right in the same row (including the box $u$ itself). 
We note $H_Y$ the product of all hook lengths, \ie $H_Y=\prod_{u\in Y} h(u)$. 

\subsection{Fully Packed Loop}\label{sub:FPLintro}

 A \emph{Fully Packed Loop configuration} (FPL) of size $n$ is a subgraph of the square grid with $n^2$ vertices, such that each vertex is connected to exactly two edges. 
We furthermore impose the following boundary conditions: we select alternatively every second of the external edges to be part of our FPLs. 
By convention, we fix that the leftmost external edge on the top boundary is part of the selected edges, which fixes thus the entire boundary of our FPLs. 
We number these external edges clockwise from $1$ to $2n$, see Figure~\ref{fig:fplexample}.
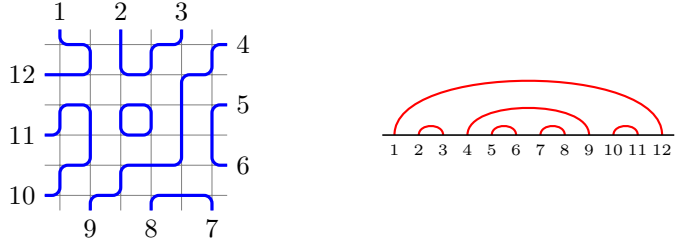
\begin{figure}[!ht]
 \begin{center}
  \begin{tikzpicture}[scale=.4]
  \draw[step=1,ajuda] (-.5,-.5) grid (5.5,5.5);
  \draw[FPL] (-.5,0) -- (0,0) -- (0,1) -- (1,1) -- (1,2) -- (1,3) -- (0,3) -- (0,2) -- (-.5,2);
  \draw[FPL] (1,-.5) -- (1,0) -- (2,0) -- (2,1) -- (3,1) -- (4,1) -- (4,2) -- (4,3) -- (4,4) -- (5,4) -- (5,5) -- (5.5,5);
  \draw[FPL] (3,-.5) -- (3,0) -- (4,0) -- (5,0) -- (5,-.5);
  \draw[FPL] (5.5,1) -- (5,1) -- (5,2) -- (5,3) -- (5.5,3);
  \draw[FPL] (-.5,4) -- (0,4) -- (1,4) -- (1,5) -- (0,5) -- (0,5.5);
  \draw[FPL] (2,5.5) -- (2,5) -- (2,4) -- (3,4) -- (3,5) -- (4,5) -- (4,5.5); 
  \draw[FPL] (2,2) rectangle (3,3);
  \node[above] at (0,5.5) {1};
  \node[above] at (2,5.5) {2};
  \node[above] at (4,5.5) {3};
  \node[right] at (5.5,5) {4};
  \node[right] at (5.5,3) {5};
  \node[right] at (5.5,1) {6};
  \node[below] at (5,-.5) {7};
  \node[below] at (3,-.5) {8};
  \node[below] at (1,-.5) {9};
  \node[left] at (-.5,0) {10};
  \node[left] at (-.5,2) {11};
  \node[left] at (-.5,4) {12};

 \begin{scope}[shift={(11,2)},scale=.8]
  \draw[arche] (0,0) .. controls (0,3) and (11,3) .. (11,0);
  \draw[arche] (1,0) .. controls (1,.5) and (2,.5) .. (2,0); 
  \draw[arche] (3,0) .. controls (3,1.5) and (8,1.5) .. (8,0);
  \draw[arche] (4,0) .. controls (4,.5) and (5,.5) .. (5,0);
  \draw[arche] (6,0) .. controls (6,.5) and (7,.5) .. (7,0);
  \draw[arche] (9,0) .. controls (9,.5) and (10,.5) .. (10,0);
  \draw[line] (-.5,0) -- (11.5,0);
  \node[below] at (0,0) {\tiny{1}};
  \node[below] at (1,0) {\tiny{2}};
  \node[below] at (2,0) {\tiny{3}};
  \node[below] at (3,0) {\tiny{4}};
  \node[below] at (4,0) {\tiny{5}};
  \node[below] at (5,0) {\tiny{6}};
  \node[below] at (6,0) {\tiny{7}};
  \node[below] at (7,0) {\tiny{8}};
  \node[below] at (8,0) {\tiny{9}};
  \node[below] at (9,0) {\tiny{10}};
  \node[below] at (10,0) {\tiny{11}};
  \node[below] at (11,0) {\tiny{12}};
  \end{scope}
  \end{tikzpicture}
\end{center}
\caption{FPL with its associated matching \label{fig:fplexample}}
\end{figure}

In each FPL configuration $F$ the chosen external edges are clearly linked by paths which  do not cross each other. 
We define $\pi(F)$ as the set of pairs $\{i,j\}$ of integers in $\{1,\ldots,2n\}$ such that the external edges labeled $i$ and $j$ are linked by a path in $F$. 
Then $\pi(F)$ is a matching in the sense of Section~\ref{representations}; an example is given on the right of Figure~\ref{fig:fplexample}. 

\begin{defi}[$A_\pi$]
 For any matching $\pi$, we define $A_\pi$ as the number of FPLs $F$ such that $\pi(F)=\pi$.
\end{defi}

A result of Wieland~\cite{wieland} shows that a rotation on matchings leaves the numbers $A_\pi$ invariant, and it is then easily seen that conjugation of matchings also leaves them invariant:

\begin{thm}[\cite{wieland}]
\label{thm:invar_api}
For any matching $\pi$, we have $A_\pi=A_{r(\pi)}$ and $A_\pi=A_{\pi^*}$.
\end{thm}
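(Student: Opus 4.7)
My plan is to prove the two invariance statements in turn, using \emph{Wieland's gyration} for the rotation $A_\pi = A_{r(\pi)}$ and a geometric reflection argument for the conjugation $A_\pi = A_{\pi^*}$.

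For the rotation invariance, I would introduce Wieland's gyration as a local operation on FPL configurations. Color the faces of the grid (including the fictitious half-faces along the boundary, one per external edge) as a checkerboard. At each face of a chosen color, inspect its four incident edges: if they form one of the two alternating patterns (opposite edges both selected, the other two unselected), flip this pattern to its complement; otherwise leave the face untouched. Apply this rule simultaneously at all faces of the chosen color to produce a new configuration $\tilde F$.

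The proof then rests on three verifications. First, that $\tilde F$ is again a valid FPL: at any interior vertex $v$, the two faces of the chosen color incident to $v$ flip in coordinated ways so that the degree-two constraint is preserved; this amounts to a finite case analysis on the local edge pattern at $v$. Second, that the boundary edges of $\tilde F$ are the same as those of $F$ but with the selected/unselected roles shifted cyclically by one position, so that after relabeling to restore the standard convention, the external connectivity gets rotated by one notch. Third, since the gyration defined from the other color class acts as an inverse, the map is a bijection, giving $A_\pi = A_{r(\pi)}$.

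For $A_\pi = A_{\pi^*}$, the plan is to combine the rotational invariance with a reflection of the square grid. Reflecting an FPL $F$ across a diagonal (or across a median axis) yields another FPL, since the boundary convention is invariant under such reflections up to a cyclic shift of labels; tracking how external labels are permuted identifies the new matching as $\pi^*$, possibly composed with a power of $r$, which is absorbed using Step~1. The main obstacle I foresee is the careful bookkeeping in Step~1: showing precisely that the gyration shifts boundary connectivity by exactly one step requires a delicate analysis near the four corners, where the alternating boundary convention interacts with the face-coloring and dictates the direction of the shift.
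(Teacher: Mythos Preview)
The paper does not actually prove this theorem: it is stated with attribution to Wieland~\cite{wieland}, preceded only by the remark that ``a result of Wieland shows that a rotation on matchings leaves the numbers $A_\pi$ invariant, and it is then easily seen that conjugation of matchings also leaves them invariant.'' Your proposal is precisely Wieland's original gyration argument for the rotation invariance, together with the reflection-plus-rotation reduction that the paper alludes to as ``easily seen'' for the conjugation part. So your approach matches the intended one; there is simply no in-paper proof to compare against beyond the citation.
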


 Now we let $A_n$ be the total number of FPLs of size $n$; by definition we have $A_n=\sum_\pi A_\pi$ where $\pi$ goes through all matchings with $n$ arches. 
We also define $A_{n}^V$ as the number of FPLs of size $n$ which are invariant with respect to vertical symmetry. 
It is easily seen that $A_{2n}^V=0$.
We have the famous product expressions of these quantities:
\begin{align}
 A_n&=\prod_{k=0}^{n-1} \frac{(3k+1)!}{(n+k)!}; \\
A_{2n+1}^V&= \frac{1}{2^n}\prod_{k=1}^n\frac{(6k-2)!(2k-1)!}{(4k-1)!(4k-2)!}.
\end{align}

The original proofs can be found in~\cite{Zeil-ASM,Kup-ASM} for $A_n$, and~\cite{MR1954236} for $A_{n}^V$.

\subsection{$O(n)$ Loop model}
\label{sub:O1}

In this subsection we briefly explain the $O(n)$ Loop model with periodic boundary conditions; for more details see~\cite{artic47, hdr, dG-review}. 
Let $n$ be an integer, and define a {\em state} as a column vector indexed by matchings of size $n$.

Let $e_i$ be the operator on matchings which creates a new arch at $(i,i+1)$, and join the vertices formerly linked to $i$ and $i+1$, as shown in the following examples:
\begin{align*}
 e_3 
\begin{tikzpicture}[scale=0.25, baseline=-3pt]
 \draw[arche] (0,0) .. controls (0,.5) and (1,.5) .. (1,0);
 \draw[arche] (2,0) .. controls (2,1.5) and (5,1.5) .. (5,0); 
 \draw[arche] (3,0) .. controls (3,.5) and (4,.5) .. (4,0);
 \draw[line] (-.5,0) -- (5.5,0);
\end{tikzpicture} = 
\begin{tikzpicture}[scale=0.25, baseline=-3pt]
 \draw[arche] (0,0) .. controls (0,.5) and (1,.5) .. (1,0);
 \draw[arche] (2,0) .. controls (2,1.5) and (5,1.5) .. (5,0); 
 \draw[arche] (3,0) .. controls (3,.5) and (4,.5) .. (4,0);
 \draw[line] (-.5,0) -- (5.5,0);
 \draw[arche] (0,0) -- (0,-1);
 \draw[arche] (1,0) -- (1,-1);
 \draw[arche] (2,0) .. controls (2,-.5) and (3,-.5) .. (3,0); 
 \draw[arche] (2,-1) .. controls (2,-.5) and (3,-.5) .. (3,-1); 
 \draw[arche] (4,0) -- (4,-1);
 \draw[arche] (5,0) -- (5,-1);
 \draw[line] (-.5,-1) -- (5.5,-1);
\end{tikzpicture} &= 
\begin{tikzpicture}[scale=0.25, baseline=-3pt]
 \draw[arche] (0,0) .. controls (0,.5) and (1,.5) .. (1,0);
 \draw[arche] (2,0) .. controls (2,.5) and (3,.5) .. (3,0); 
 \draw[arche] (4,0) .. controls (4,.5) and (5,.5) .. (5,0);
 \draw[line] (-.5,0) -- (5.5,0);
\end{tikzpicture}\\
 e_4
\begin{tikzpicture}[scale=0.25, baseline=-3pt]
 \draw[arche] (0,0) .. controls (0,.5) and (1,.5) .. (1,0);
 \draw[arche] (2,0) .. controls (2,1.5) and (5,1.5) .. (5,0); 
 \draw[arche] (3,0) .. controls (3,.5) and (4,.5) .. (4,0);
 \draw[line] (-.5,0) -- (5.5,0);
\end{tikzpicture} = 
\begin{tikzpicture}[scale=0.25, baseline=-3pt]
 \draw[arche] (0,0) .. controls (0,.5) and (1,.5) .. (1,0);
 \draw[arche] (2,0) .. controls (2,1.5) and (5,1.5) .. (5,0); 
 \draw[arche] (3,0) .. controls (3,.5) and (4,.5) .. (4,0);
 \draw[line] (-.5,0) -- (5.5,0);
 \draw[arche] (0,0) -- (0,-1);
 \draw[arche] (1,0) -- (1,-1);
 \draw[arche] (3,0) .. controls (3,-.5) and (4,-.5) .. (4,0); 
 \draw[arche] (3,-1) .. controls (3,-.5) and (4,-.5) .. (4,-1); 
 \draw[arche] (2,0) -- (2,-1);
 \draw[arche] (5,0) -- (5,-1);
 \draw[line] (-.5,-1) -- (5.5,-1);
\end{tikzpicture} &=
\begin{tikzpicture}[scale=0.25, baseline=-3pt]
 \draw[arche] (0,0) .. controls (0,.5) and (1,.5) .. (1,0);
 \draw[arche] (2,0) .. controls (2,1.5) and (5,1.5) .. (5,0); 
 \draw[arche] (3,0) .. controls (3,.5) and (4,.5) .. (4,0);
 \draw[line] (-.5,0) -- (5.5,0);
\end{tikzpicture}
\end{align*}
The operator $e_0$ creates an arch linking the positions $1$ and $2n$. 
Attached to these operators is the {\em Hamiltonian}
\[
 \mathcal{H}_{2n}=\sum_{i=0}^{2n-1} (1-e_i),
\]
where $1$ is the identity.  
$\mathcal{H}_{2n}$ acts naturally on states, and the groundstate $(\psi_\pi)_{\pi:|\pi|=n}$ attached to $\mathcal{H}_{2n}$ is defined as follows:

\begin{defi}[$\psi_\pi$]
\label{defi:psipi}
Let $n$ be a positive integer. 
We define the groundstate in the $O(n)$ Loop model as the  vector $\psi=(\psi_\pi)_{\pi:|\pi|=n}$ which is the solution of $\mathcal{H}_{2n}\psi=0$, normalized by $\psi_{()_n}=1$.
\end{defi}

By the Perron-Frobenius theorem, this is well defined. 
We have then the followings properties:

\begin{thm}
\label{th:propPsipi}
 Let $n$ be a positive integer.
\begin{itemize}
\item For any $\pi$,  $\psi_{r(\pi)}=\psi_{\pi^*}=\psi_{\pi}$.
\item The numbers $\psi_\pi$ are positive integers.
\item $\sum_\pi \psi_\pi = A_n$, where the sum is over matchings such that $|\pi|=n$.
\end{itemize}
\end{thm}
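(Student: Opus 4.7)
The plan is to deduce the three statements from two ingredients: the symmetries of the Hamiltonian $\mathcal{H}_{2n}$ combined with Perron--Frobenius for the first bullet, and the Razumov--Stroganov--Cantini--Sportiello theorem $A_\pi=\psi_\pi$ for the remaining two.

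For the symmetry properties, I would introduce the linear operators $R$ and $S$ on the space of formal linear combinations of matchings, acting by $\pi\mapsto r(\pi)$ and $\pi\mapsto \pi^*$ respectively. Since rotating or reflecting the indices $\{1,\ldots,2n\}$ merely permutes the generators $e_0,\ldots,e_{2n-1}$ among themselves, both $R$ and $S$ commute with $\mathcal{H}_{2n}$. Consequently $R\psi$ and $S\psi$ lie in the one-dimensional kernel provided by Perron--Frobenius, so $R\psi=c\psi$ and $S\psi=c'\psi$ for scalars $c,c'$. Perron--Frobenius further guarantees that every entry of $\psi$ is strictly positive while $R$ and $S$ merely permute those entries, forcing $c,c'>0$; combined with $R^{2n}=S^2=\mathrm{Id}$ this yields $c=c'=1$, and reading off coordinates gives $\psi_{r(\pi)}=\psi_\pi$ and $\psi_{\pi^*}=\psi_\pi$. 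Note that this argument does not require $()_n$ to be fixed by $R$, which it is not in general; the normalization $\psi_{()_n}=1$ is only used to guarantee $\psi\neq 0$.

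The remaining two bullets follow immediately from $\psi_\pi=A_\pi$. Since $A_\pi$ enumerates FPL configurations it is a nonnegative integer, and it is in fact positive because one can always exhibit at least one FPL realizing any prescribed connectivity $\pi$; hence $\psi_\pi\in\mathbb{Z}_{>0}$. Summing over matchings of size $n$ then yields $\sum_\pi \psi_\pi = \sum_\pi A_\pi = A_n$ by the very definition of $A_n$. The main obstacle is of course the deep Razumov--Stroganov correspondence itself, invoked here as the theorem of Cantini--Sportiello \cite{ProofRS}; in the present paper it is used as a black box, so the only content requiring a direct argument is the symmetry bullet, whose Perron--Frobenius proof is sketched above. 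An alternative route to the sum rule that bypasses $A_\pi=\psi_\pi$ proceeds through the $q$-deformed Knizhnik--Zamolodchikov equation developed in Section~\ref{sec:CPL_multi}, which is the tool the paper builds up for its own later purposes.
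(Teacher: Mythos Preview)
Your argument is correct. For the first bullet you spell out precisely what the paper means by ``clear from the symmetry of the problem'': the Perron--Frobenius one-dimensionality of $\ker\mathcal{H}_{2n}$ together with the commutation of the permutation operators $R$ and $S$ with $\mathcal{H}_{2n}$; the positivity of $c,c'$ then pins them down via $R^{2n}=S^2=\mathrm{Id}$. This matches the paper's approach.

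For the second and third bullets your route differs from the paper's primary one. The paper does \emph{not} invoke Cantini--Sportiello here: it cites \cite{artic43} for integrality and \cite{artic31} for the sum rule, both of which predate the proof of the Razumov--Stroganov conjecture and are logically independent of it. Your deduction from $A_\pi=\psi_\pi$ is of course valid, and the paper itself remarks immediately afterwards that Theorem~\ref{th:propPsipi} is now a corollary of Theorem~\ref{conj:rs}; so you are taking the alternative route the paper explicitly flags. The trade-off is that your argument is short but rests on the much deeper result of \cite{ProofRS}, whereas the cited papers give self-contained proofs (via the $q$KZ machinery you allude to at the end) that do not require the full FPL correspondence. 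One small remark: you do not actually need the claim that every connectivity $\pi$ is realized by at least one FPL, since you have already established strict positivity of $\psi_\pi$ via Perron--Frobenius in the first part; combining that with the integrality coming from $\psi_\pi=A_\pi\in\mathbb{Z}_{\ge 0}$ already gives $\psi_\pi\in\mathbb{Z}_{>0}$.
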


The stability by rotation and conjugation is clear from the symmetry of the problem. 
The integral property was proved in~\cite[Section 4.4]{artic43}, while the sum rule was proved in~\cite{artic31}.
The computation of this groundstate has received a lot of interest, mainly because of the Razumov--Stroganov (ex-)conjecture.

\subsection{The Razumov--Stroganov conjecture}

A simple computation shows that
\begin{align*}
 \psi_{
 \begin{tikzpicture}[scale=0.15]
   \draw[arche] (0,0) .. controls (0,.5) and (1,.5) .. (1,0);
   \draw[arche] (2,0) .. controls (2,.5) and (3,.5) .. (3,0); 
   \draw[arche] (4,0) .. controls (4,.5) and (5,.5) .. (5,0);
 \end{tikzpicture}}
&=2&
 \psi_{
 \begin{tikzpicture}[scale=0.15]
   \draw[arche] (0,0) .. controls (0,1.5) and (5,1.5) .. (5,0);
   \draw[arche] (1,0) .. controls (1,.5) and (2,.5) .. (2,0); 
   \draw[arche] (3,0) .. controls (3,.5) and (4,.5) .. (4,0);
 \end{tikzpicture}}
&=2 &
 \psi_{
 \begin{tikzpicture}[scale=0.15]
   \draw[arche] (0,0) .. controls (0,1) and (3,1) .. (3,0);
   \draw[arche] (1,0) .. controls (1,.5) and (2,.5) .. (2,0); 
   \draw[arche] (4,0) .. controls (4,.5) and (5,.5) .. (5,0);
 \end{tikzpicture}}
&=1\\ 
\psi_{
 \begin{tikzpicture}[scale=0.15]
   \draw[arche] (0,0) .. controls (0,.5) and (1,.5) .. (1,0);
   \draw[arche] (2,0) .. controls (2,1) and (5,1) .. (5,0); 
   \draw[arche] (3,0) .. controls (3,.5) and (4,.5) .. (4,0);
 \end{tikzpicture}}
&=1 &
 \psi_{
 \begin{tikzpicture}[scale=0.15]
   \draw[arche] (0,0) .. controls (0,1.5) and (5,1.5) .. (5,0);
   \draw[arche] (1,0) .. controls (1,1) and (4,1) .. (4,0); 
   \draw[arche] (2,0) .. controls (2,.5) and (3,.5) .. (3,0);
 \end{tikzpicture}}
&=1
\end{align*}
which are exactly the numbers that appear in the FPL counting:
\medskip

\[
  \begin{tikzpicture}[scale=.3]
  \draw[step=1,ajuda] (-.5,-.5) grid (2.5,2.5);
  \draw[FPL] (-.5,0) -- (0,0) -- (0,1) -- (0,2) -- (-.5,2);
  \draw[FPL] (1,-.5) -- (1,0) -- (1,1) -- (2,1) -- (2,0) -- (2.5,0);
  \draw[FPL] (1,2.5) -- (1,2) -- (2,2) -- (2.5,2);

  \draw[shift={(4,0)},step=1,ajuda] (-.5,-.5) grid (2.5,2.5);
  \draw[shift={(4,0)},FPL] (-.5,0) -- (0,0) -- (0,1) -- (0,2) -- (-.5,2);
  \draw[shift={(4,0)},FPL] (1,-.5) -- (1,0) -- (2,0) -- (2.5,0);
  \draw[shift={(4,0)},FPL] (1,2.5) -- (1,2) -- (1,1) -- (2,1) -- (2,2) -- (2.5,2);

  \draw[snake=brace,mirror snake] (-.5,-1)--(6.5,-1);

  \draw[shift={(1.75,-2.5)},arche] (0,0)..controls(0,.25)and(.5,.25)..(.5,0);
  \draw[shift={(1.75,-2.5)},arche] (1,0)..controls(1,.25)and(1.5,.25)..(1.5,0);
  \draw[shift={(1.75,-2.5)},arche] (2,0)..controls(2,.25)and(2.5,.25)..(2.5,0);

  \draw[shift={(8,0)},step=1,ajuda] (-.5,-.5) grid (2.5,2.5);
  \draw[shift={(8,0)},FPL] (-.5,0) -- (0,0) -- (0,1) -- (1,1) -- (1,0)-- (1,-.5);
  \draw[shift={(8,0)},FPL] (2.5,2) -- (2,2) -- (2,1) -- (2,0) -- (2.5,0);
  \draw[shift={(8,0)},FPL] (1,2.5) -- (1,2) -- (0,2) -- (-.5,2);

  \draw[shift={(12,0)},step=1,ajuda] (-.5,-.5) grid (2.5,2.5);
  \draw[shift={(12,0)},FPL] (-.5,0) -- (0,0) -- (1,0) -- (1,-.5);
  \draw[shift={(12,0)},FPL] (2.5,2) -- (2,2) -- (2,1) -- (2,0) -- (2.5,0);
  \draw[shift={(12,0)},FPL] (1,2.5) -- (1,2) -- (1,1) -- (0,1) -- (0,2) -- (-.5,2);

  \draw[snake=brace,mirror snake] (7.5,-1)--(14.5,-1);

  \draw[shift={(9.75,-2.5)},arche] (0,0)..controls(0,.75)and(2.5,.75)..(2.5,0);
  \draw[shift={(9.75,-2.5)},arche] (.5,0)..controls(.5,.25)and(1,.25)..(1,0);
  \draw[shift={(9.75,-2.5)},arche] (1.5,0)..controls(1.5,.25)and(2,.25)..(2,0);

  \draw[shift={(16,0)},step=1,ajuda] (-.5,-.5) grid (2.5,2.5);
  \draw[shift={(16,0)},FPL] (-.5,0) -- (0,0) -- (1,0) -- (1,-.5);
  \draw[shift={(16,0)},FPL] (-.5,2) -- (0,2) -- (0,1) -- (1,1) -- (2,1) -- (2,0) -- (2.5,0);
  \draw[shift={(16,0)},FPL] (1,2.5) -- (1,2) -- (2,2) -- (2.5,2);

  \draw[snake=brace,mirror snake] (15.5,-1)--(18.5,-1);

  \draw[shift={(15.75,-2.5)},arche] (0,0)..controls(0,.25)and(.5,.25)..(.5,0);
  \draw[shift={(15.75,-2.5)},arche] (1,0)..controls(1,.5)and(2.5,.5)..(2.5,0);
  \draw[shift={(15.75,-2.5)},arche] (1.5,0)..controls(1.5,.25)and(2,.25)..(2,0);

  \draw[shift={(20,0)},step=1,ajuda] (-.5,-.5) grid (2.5,2.5);
  \draw[shift={(20,0)},FPL] (-.5,0) -- (0,0) -- (0,1) -- (0,2) -- (-.5,2);
  \draw[shift={(20,0)},FPL] (1,-.5) -- (1,0) -- (1,1) -- (1,2) -- (1,2.5);
  \draw[shift={(20,0)},FPL] (2.5,0) -- (2,0) -- (2,1) -- (2,2) -- (2.5,2);

  \draw[snake=brace,mirror snake] (19.5,-1)--(22.5,-1);

  \draw[shift={(19.75,-2.5)},arche] (0,0)..controls(0,.5)and(1.5,.5)..(1.5,0);
  \draw[shift={(19.75,-2.5)},arche] (.5,0)..controls(.5,.25)and(1,.25)..(1,0);
  \draw[shift={(19.75,-2.5)},arche] (2,0)..controls(2,.25)and(2.5,.25)..(2.5,0);

  \draw[shift={(24,0)},step=1,ajuda] (-.5,-.5) grid (2.5,2.5);
  \draw[shift={(24,0)},FPL] (-.5,0) -- (0,0) -- (0,1) -- (1,1) -- (2,1) -- (2,2) -- (2.5,2);
  \draw[shift={(24,0)},FPL] (1,-.5) -- (1,0) -- (2,0) -- (2.5,0);
  \draw[shift={(24,0)},FPL] (1,2.5) -- (1,2) -- (0,2) -- (-.5,2);

  \draw[snake=brace,mirror snake] (23.5,-1)--(26.5,-1);

  \draw[shift={(23.75,-2.5)},arche] (0,0)..controls(0,.75)and(2.5,.75)..(2.5,0);
  \draw[shift={(23.75,-2.5)},arche] (.5,0)..controls(.5,.5)and(2,.5)..(2,0);
  \draw[shift={(23.75,-2.5)},arche] (1,0)..controls(1,.25)and(1.5,.25)..(1.5,0);
  \end{tikzpicture}
\]

\medskip
Razumov and Stroganov~\cite{RS-conj} noticed in 2001 that this seems to hold in general, and this was recently proved by Cantini and Sportiello~\cite{ProofRS}:

\begin{thm}[Stroganov--Razumov--Cantini--Sportiello Theorem]
\label{conj:rs}
 The groundstate components of the $O(n)$ Loop model count the number of FPL configurations: for any matching $\pi$,
\[
 \psi_\pi=A_{\pi}.
\]
\end{thm}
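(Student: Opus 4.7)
By the Perron--Frobenius theorem the positive kernel of $\mathcal{H}_{2n}$ is one-dimensional and spanned by $\psi$. So to prove $\psi_\pi = A_\pi$ it is enough to check that (i) the positive vector $A=(A_\pi)_\pi$ also satisfies $\mathcal{H}_{2n} A = 0$, and (ii) $A$ and $\psi$ have the same total mass. Point (ii) is immediate from Theorem~\ref{th:propPsipi}: one has $\sum_\pi A_\pi = A_n = \sum_\pi \psi_\pi$, so once proportionality is established the constant of proportionality must be $1$.

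\textbf{Proving the eigenvector equation.} The natural tool for (i) is Wieland's gyration, which already realises the rotation invariance $A_\pi=A_{r(\pi)}$ by a bijection on FPLs. Gyration is defined as a product of local involutions on the plaquettes of one colour in a checkerboard pattern; I would try to carry out only the involutions on the strip of plaquettes adjacent to the two boundary edges labelled $i$ and $i+1$. The resulting ``partial gyration'' should locally modify $\pi(F)$ in exactly the way dictated by the operator $e_i$: positions $i$ and $i+1$ are either joined into a new small arch, or their current connectivity is rerouted by reconnecting their former partners. Assembling these $2n$ local identities and summing over $i$ would yield the global identity $\sum_i (1-e_i) A = 0$, which is precisely $\mathcal{H}_{2n} A = 0$.

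\textbf{Main obstacle.} The delicate step is the case analysis for this local bijection. For a fixed position $i$, the two selected external edges may belong to the same loop of $F$ or to distinct loops, and the plaquettes in the adjacent strip can be in several local configurations; each case must be analysed to check that partial gyration implements the transformation of $\pi(F)$ prescribed by $e_i$. One also has to treat the corners of the grid and the periodic index $i=0$ (linking positions $1$ and $2n$) separately, and to confirm that the ``partial'' gyration one constructs is indeed an involution compatible with the global gyration used in Wieland's proof. Once the local bijection is verified in every configuration, the identity $\mathcal{H}_{2n} A = 0$ follows, and Perron--Frobenius combined with the sum rule closes the argument.
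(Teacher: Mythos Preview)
The paper does not give its own proof of this theorem: it simply attributes the result to Cantini and Sportiello and summarises their argument in one line, namely that they verify the defining relations of $\psi$ (Definition~\ref{defi:psipi}) for the numbers $A_\pi$. At the level of that summary your plan is exactly right: show $\mathcal{H}_{2n}A=0$ and then use Perron--Frobenius to conclude. (For the normalisation you can even avoid the sum rule: $A_{()_n}=1$ is immediate, matching the normalisation $\psi_{()_n}=1$.)

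Where your sketch diverges from the actual Cantini--Sportiello argument is in the mechanism you propose for the eigenvector identity. You suggest performing the plaquette involutions only on a local strip adjacent to the boundary edges $i$ and $i+1$, hoping this realises $e_i$ by a bijection on FPLs. This is not how the proof works, and as stated it cannot work: a local flip near the boundary does not preserve the FPL condition on the rest of the grid, and there is no bijection on FPLs implementing a single $e_i$. What Cantini and Sportiello do instead is apply a \emph{global} half-gyration (all plaquettes of one parity), which \emph{is} a bijection on FPLs; they then analyse its effect on the link pattern and show that, aggregated over the whole set of FPLs with a given pattern, the result matches the action of $\sum_i e_i$ (up to the identity part). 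The crucial point is that one never obtains the relations for the individual $e_i$; one only obtains the relation for the full Hamiltonian $\sum_i(1-e_i)$ after summing. Your ``main obstacle'' is therefore not just a delicate case analysis but a genuine obstruction: the local bijection you are looking for does not exist, and you need the global half-gyration and its randomised effect on the link pattern instead.
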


The proof of Cantini and Sportiello consists in verifying that the relations of Definition~\ref{defi:psipi} hold for the numbers $A_\pi$. 
We note also that the results of Theorem~\ref{th:propPsipi} are now a corollary of the Razumov--Stroganov conjecture.

\subsection{Matchings with nested arches and polynomials}

In~\cite{Zuber-conj}, Zuber computed some $\psi_{(\pi)_p}$ for some small matchings $\pi$, and $p=0,1,2,...$. Among other things, he conjectured the following:
\begin{thm}[{\cite{CKLN,artic47}}]
\label{zuber}
For any matching $\pi$ and $p$ a nonnegative integer, the quantity $A_{(\pi)_p}$ can be written in the following form:
\[
 A_{(\pi)_p}=\frac{P_\pi (p)}{d(\pi)!},
\]
where $P_\pi (p)$ is a polynomial in $p$ of degree $d(\pi)$ with integer coefficients, and leading coefficient equal to $d(\pi)!/H_{Y(\pi)}$.
\end{thm}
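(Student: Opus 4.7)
The plan is to work with $\psi_{(\pi)_p}$ rather than $A_{(\pi)_p}$ (these are equal by Theorem~\ref{conj:rs}), and to exploit the multivariate polynomial lift of $\psi_\pi$ provided by the qKZ equation that the author introduces in Section~\ref{sec:CPL_multi}. Concretely, I would lift $\psi_{(\pi)_p}$ to a polynomial $\Psi_{(\pi)_p}(z_1,\ldots,z_{2N})$ in $2N=2(n+p)$ spectral parameters, where $|\pi|=n$; setting all $z_i=1$ recovers $\psi_{(\pi)_p}$. This lift is convenient because polynomial degree arguments and ``wheel conditions'' (vanishing at specific specializations forced by the qKZ exchange relations) transform the addition of a nested arch into an algebraically clean recursion.

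The core of the argument is a factorization property of $\Psi_{(\pi)_p}$ attached to the nested arches. Using the qKZ recursion, one shows that the two spectral variables $z_i,z_{i+1}$ sitting under an outermost nested arch can be ``peeled off'', producing an explicit prefactor times $\Psi_{(\pi)_{p-1}}$ in $2(N-1)$ variables. Iterating this peeling $p$ times, I would express the specialization of $\Psi_{(\pi)_p}$ at $z_i=1$ as a contour integral in the spirit of Di Francesco and Zinn--Justin, with exactly $d(\pi)$ integration variables, one per box of $Y(\pi)$. The crucial feature is that after peeling, the $p$-dependence of the integrand is concentrated in factors of the form $(\text{rational function of the contour variables})^p$. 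In particular, the integrand depends polynomially on $p$ once the contours are fixed.

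Polynomiality, the degree bound, and the leading coefficient then follow from a residue evaluation of this integral. Each of the $d(\pi)$ contour integrations picks up a residue that is linear in $p$, so the sum over residue configurations is a polynomial of degree at most $d(\pi)$. The dominant term as $p\to\infty$ is controlled by a single residue configuration whose combinatorial data is naturally labelled by a filling of the boxes of $Y(\pi)$; standard manipulations match this leading contribution to the Frame--Robinson--Thrall hook-length product $1/H_{Y(\pi)}$, and absorbing the numerical factors into the normalization $P_\pi(p)/d(\pi)!$ gives the stated leading coefficient $d(\pi)!/H_{Y(\pi)}$. Integrality of the coefficients of $P_\pi$ then follows from a standard finite-difference argument: $A_{(\pi)_p}$ is an integer for every nonnegative integer $p$, the degree is at most $d(\pi)$, and expanding $P_\pi(p)/d(\pi)!$ in the binomial basis $\binom{p+k}{d(\pi)}$ forces integer numerators in the monomial basis after multiplication by $d(\pi)!$.

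The main obstacle is the intermediate step: deriving the contour-integral formula with exactly $d(\pi)$ integration variables, and then identifying the dominant residue with the hook-length product over $Y(\pi)$. Both sub-steps require a careful combinatorial bookkeeping that translates the ``geometry'' of the nested-arch recursion into the shape $Y(\pi)$, and a matching of normalizations across the multivariate polynomial, its specialization at $z_i=1$, and the scalar $\psi_\pi$. The remaining steps (polynomial degree from the residue count, integrality from the finite-difference argument) are then essentially formal.
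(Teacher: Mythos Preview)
The paper does not actually prove Theorem~\ref{zuber}; it is quoted from \cite{CKLN} (a direct combinatorial argument on FPLs for $A_{(\pi)_p}$) and \cite{artic47} (the qKZ/contour-integral argument for $\psi_{(\pi)_p}$). The only echo of the proof in this paper is Section~\ref{sec:pol_qKZ}, where the first $p$ of the $n+p$ contour variables in $\phi_{(a)_p}(\tau)$ are integrated out to yield
\[
\phi_{(a)_p}(\tau)=\oint\cdots\oint \prod_{i=1}^{n}\frac{du_i}{2\pi i\,u_i^{a_i}}(1+\tau u_i)^{p}\prod_{j>i}(u_j-u_i)(1+\tau u_j+u_i u_j),
\]
followed by the remark ``This is a polynomial in $p$''.

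Your overall strategy is exactly this \cite{artic47} route, so architecturally you match the cited proof. However, one concrete claim in your plan is wrong and would derail the degree and leading-coefficient steps: after peeling off the $p$ nested arches the surviving integral has $n=|\pi|$ contour variables (one per arch of $\pi$), \emph{not} $d(\pi)$ variables ``one per box of $Y(\pi)$''. The $p$-dependence is carried by the $n$ factors $(1+\tau u_i)^p=\sum_{k\ge 0}\binom{p}{k}(\tau u_i)^k$; the degree bound $d(\pi)$ comes from the orders $a_i$ of the poles at $u_i=0$ together with the cross terms $\prod_{j>i}(u_j-u_i)(1+\tau u_j+u_iu_j)$, not from ``each of $d(\pi)$ integrations being linear in $p$''. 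Once you correct this, the leading coefficient $1/H_{Y(\pi)}$ for $\psi_\pi$ is read off via the triangular change of basis of Proposition~\ref{prop:Bases} (only $a=a(\pi)$ contributes at top degree), and your finite-difference argument for integrality of $P_\pi$ is fine.
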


This was proved first by Caselli, Krattenthaler, Lass and Nadeau in~\cite{CKLN} for $A_{(\pi)_p}$, and by Fonseca and Zinn-Justin in~\cite{artic47} for $\psi_{(\pi)_p}$. 
Because of this polynomiality property, we introduce the following notations:

\begin{defi}[$A_\pi(t)$ and $\psi_\pi(t)$] 
We let $A_\pi(t)$ (respectively  $\psi_\pi(t)$) be the polynomial in $t$ such that  $A_\pi(p)=A_{(\pi)_p}$ (resp. $\psi_\pi(p)=\psi_{(\pi)_p}$) for all integers $p\geq 0$.
\end{defi}

By the Razumov--Stroganov conjecture~\ref{conj:rs} one has clearly for all $\pi$:
\[
 A_\pi(t)=\psi_\pi(t).
\]
We introduced two different notations so that the origin of the quantities involved becomes clearer; in most of this paper however we will only use the notation $\psi_\pi(t)$. 
The following proposition sums up some properties of the polynomials.

\begin{prop}
\label{prop:polynomials}
The polynomial $\psi_\pi(t)$ has degree $d(\pi)$ and leading coefficient $1/H_{Y(\pi)}$. 
Furthermore, we have $\psi_\pi(t)=\psi_{\pi^*}(t)$, and $\psi_{(\pi)_\ell}(t)=\psi_{\pi}(t+\ell)$ for any nonnegative integer $\ell$.
\end{prop}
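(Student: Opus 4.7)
The statement is a direct consequence of results already available in the excerpt, so the plan is simply to assemble them correctly.

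First I would establish the degree and leading coefficient. By the Razumov--Stroganov--Cantini--Sportiello Theorem~\ref{conj:rs} we have $\psi_\pi(t)=A_\pi(t)$ as polynomials (they agree on all nonnegative integers and hence everywhere), so it suffices to read off the leading term from Theorem~\ref{zuber}. That theorem gives $A_{(\pi)_p}=P_\pi(p)/d(\pi)!$ with $\deg P_\pi = d(\pi)$ and leading coefficient $d(\pi)!/H_{Y(\pi)}$; dividing by $d(\pi)!$ yields a polynomial of degree $d(\pi)$ with leading coefficient $1/H_{Y(\pi)}$, which is therefore the degree and leading coefficient of $\psi_\pi(t)$.

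Next, I would prove the translation identity $\psi_{(\pi)_\ell}(t)=\psi_\pi(t+\ell)$. This is purely combinatorial on the level of matchings: wrapping $p$ nested arches around the matching $(\pi)_\ell$ simply yields $(\pi)_{\ell+p}$, i.e.\ $((\pi)_\ell)_p = (\pi)_{\ell+p}$. Consequently, for every nonnegative integer $p$,
\[
\psi_{(\pi)_\ell}(p)=\psi_{((\pi)_\ell)_p}=\psi_{(\pi)_{\ell+p}}=\psi_\pi(\ell+p).
\]
Two polynomials that coincide at infinitely many points are equal, giving the claimed identity in $t$.

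For the conjugation identity $\psi_\pi(t)=\psi_{\pi^*}(t)$, I would first check the combinatorial identity $((\pi)_p)^* = (\pi^*)_p$: conjugation is the mirror symmetry on parenthesis words, and a word of the form $\underbrace{(\cdots(}_{p}\,w\,\underbrace{)\cdots)}_{p}$ maps under mirror symmetry to $\underbrace{(\cdots(}_{p}\,w^*\,\underbrace{)\cdots)}_{p}$, so the outer nest of $p$ arches is preserved. Combining this with Theorem~\ref{th:propPsipi}, which tells us that $\psi_{\sigma^*}=\psi_\sigma$ for every matching $\sigma$, we get $\psi_{(\pi^*)_p}=\psi_{((\pi)_p)^*}=\psi_{(\pi)_p}$ for every nonnegative integer $p$, that is, $\psi_{\pi^*}(p)=\psi_\pi(p)$ for all $p\geq 0$. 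Polynomial equality again follows from agreement on infinitely many integers.

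There is no real obstacle in this proposition; the only point requiring care is the elementary verification that nesting and conjugation commute, i.e.\ $((\pi)_p)^* = (\pi^*)_p$, but this is immediate once one represents $\pi$ as a parenthesis word.
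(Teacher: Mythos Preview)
Your argument is correct and follows essentially the same approach as the paper: deduce the degree and leading coefficient from Theorem~\ref{zuber}, and obtain the remaining identities by checking them at all nonnegative integers and invoking polynomiality. One small remark: you route the first part through the identification $\psi_\pi(t)=A_\pi(t)$ via Theorem~\ref{conj:rs}, whereas the paper notes that Theorem~\ref{zuber} was also proved directly for $\psi_{(\pi)_p}$ in~\cite{artic47}, so the appeal to Razumov--Stroganov is not strictly necessary.
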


  The first part comes from Theorem~\ref{zuber}, while the rest is clear when $t$ is a nonnegative integer and thus holds true in general by polynomiality in $t$. 

\section{Conjectures}\label{sec:conj}

The aim of this article is to prove two conjectures presented in~\cite{negative} about the polynomials $\psi_\pi (t)$ for negative $t$.
In fact, when computing these quantities, it is natural to add an extra parameter $\tau$, \ie there is a bivariate polynomial $\psi_\pi (\tau,t)$ which has the same properties as $\psi_\pi (t)$ and in the limit $\tau=1$, it coincides with $\psi_\pi (t)$.

In Section~\ref{sec:CPL_multi}, where we explain how to compute the $\psi_\pi (t)$, the origin of this parameter will be made more clear.
For now, it will be enough to think at this parameter as a refinement.

\subsection{Integer roots}

Let $\pi$ be a matching, represented by a link pattern, and $|\pi|=n$ its number of arches.
Define $\hat{x}:=2n+1-x$.

\begin{defi}[$m_p (\pi)$]
Let $p$ be an integer between $1$ and $n-1$. 
We consider the set $\mathcal{A}_p^L (\pi)$ of arches $\{a_1,a_2\}$ such that $a_1\leq p$ and $p<a_2<\hat{p}$, and the set $\mathcal{A}_p^R (\pi)$ of arches $\{a_1,a_2\}$ such that $p<a_1<\hat{p}$ and $a_2\geq \hat{p}$. 
It is clear that $\left|\mathcal{A}_p^L(\pi)\right|+\left|\mathcal{A}_p^R(\pi)\right|$ is a even nonnegative integer, and we can thus define the nonnegative integer
\[
 m_p(\pi) := \frac{\left|\mathcal{A}_p^L(\pi)\right|+\left|\mathcal{A}_p^R(\pi)\right|}{2}.
\]
\end{defi}

For example, let $\pi$ be the following matching with eight arches. 
For $p=4$, we get $\left|\mathcal{A}_p^L(\pi)\right|=3$ and $\left|\mathcal{A}_p^R(\pi)\right|=1$, which count arches between the regions (O) and (I), thus $m_4(\pi)=2$.
In the figure on the right we give an alternative representative by folding the link pattern, it is then clear that $m_p(\pi)$ is half of the number of arches linking (O) with (I).

\[
\begin{tikzpicture}[scale=.3]
\draw[ajuda] (-.5,0)--(15.5,0);
\draw[arche] (0,0) .. controls (0,4) and (15,4) .. (15,0);
\draw[arche] (1,0) .. controls (1,2) and (8,2) .. (8,0);
\draw[arche] (2,0) .. controls (2,1) and (5,1) .. (5,0);
\draw[arche] (3,0) .. controls (3,.5) and (4,.5) .. (4,0);
\draw[arche] (6,0) .. controls (6,.5) and (7,.5) .. (7,0);
\draw[arche] (9,0) .. controls (9,1.5) and (14,1.5) .. (14,0);
\draw[arche] (10,0) .. controls (10,.5) and (11,.5) .. (11,0);
\draw[arche] (12,0) .. controls (12,.5) and (13,.5) .. (13,0);
\node[below] at (0,0) {\tiny{$1$}};
\node[below] at (1,0) {\tiny{$2$}};
\node[below] at (2,0) {\tiny{$3$}};
\node[below] at (3,0) {\tiny{$4$}};
\node[below] at (4,0) {\tiny{$5$}};
\node[below] at (5,0) {\tiny{$6$}};
\node[below] at (6,0) {\tiny{$7$}};
\node[below] at (7,0) {\tiny{$8$}};
\node[below] at (8,0) {\tiny{$\hat{8}$}};
\node[below] at (9,0) {\tiny{$\hat{7}$}};
\node[below] at (10,0) {\tiny{$\hat{6}$}};
\node[below] at (11,0) {\tiny{$\hat{5}$}};
\node[below] at (12,0) {\tiny{$\hat{4}$}};
\node[below] at (13,0) {\tiny{$\hat{3}$}};
\node[below] at (14,0) {\tiny{$\hat{2}$}};
\node[below] at (15,0) {\tiny{$\hat{1}$}};

\draw[gray,dashed] (3.5,-1) -- (3.5,5.5);
\draw[gray,dashed] (11.5,-1) -- (11.5,5.5);

\node at (1.5,5) {(O)};
\node at (7.5,5) {(I)};
\node at (13.5,5) {(O)};
\end{tikzpicture}
\qquad\qquad\qquad
\begin{tikzpicture}[scale=.3]
\draw[ajuda] (-.5,0)--(7.5,0);
\draw[ajuda] (-.5,3)--(7.5,3);

\draw[arche] (0,0)--(0,3);
\draw[arche] (1,0) .. controls (1,1.75) and (7,1.25) .. (7,3);
\draw[arche] (2,0) .. controls (2,1) and (5,1) .. (5,0);
\draw[arche] (3,0) .. controls (3,.5) and (4,.5) .. (4,0);
\draw[arche] (6,0) .. controls (6,.5) and (7,.5) .. (7,0);
\draw[arche] (6,3) .. controls (6,1.5) and (1,1.5) .. (1,3);
\draw[arche] (5,3) .. controls (5,2.5) and (4,2.5) .. (4,3);
\draw[arche] (3,3) .. controls (3,2.5) and (2,2.5) .. (2,3);

\node[below] at (0,0) {\tiny{$1$}};
\node[below] at (1,0) {\tiny{$2$}};
\node[below] at (2,0) {\tiny{$3$}};
\node[below] at (3,0) {\tiny{$4$}};
\node[below] at (4,0) {\tiny{$5$}};
\node[below] at (5,0) {\tiny{$6$}};
\node[below] at (6,0) {\tiny{$7$}};
\node[below] at (7,0) {\tiny{$8$}};
\node[above] at (0,3) {\tiny{$\hat{1}$}};
\node[above] at (1,3) {\tiny{$\hat{2}$}};
\node[above] at (2,3) {\tiny{$\hat{3}$}};
\node[above] at (3,3) {\tiny{$\hat{4}$}};
\node[above] at (4,3) {\tiny{$\hat{5}$}};
\node[above] at (5,3) {\tiny{$\hat{6}$}};
\node[above] at (6,3) {\tiny{$\hat{7}$}};
\node[above] at (7,3) {\tiny{$\hat{8}$}};

\draw[gray,dashed] (3.5,-1) -- (3.5,5.5);

\node at (1.5,5) {(O)};
\node at (5.5,5) {(I)};
\end{tikzpicture}
\]
The reader can check that $m_p=0,1,2,2,2,1,1$ for $p=1,\ldots,7$.

It was conjectured in~\cite{negative} that these numbers correspond to the multiplicity of the real roots of $\psi_\pi (t)$:

\begin{conj}\label{conj:realroots}
 All the real roots of the polynomials $\psi_{\pi}(t)$ are negative integers, and $-p$ appears with multiplicity $m_p(\pi)$. Equivalently, we have a factorization:
\[
 \psi_{\pi}(t) = \frac{1}{|d(\pi)|!} \cdot \left(\prod_{p=1}^{|\pi|-1} (t+p)^{m_p(\pi)}\right)\cdot Q_{\pi} (t),
\]
where $Q_{\pi} (t)$ is a polynomial with integer coefficients and no real roots. 
\end{conj}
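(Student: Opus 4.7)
The approach is to lift the question from the univariate polynomial $\psi_\pi(t)$ to the multivariate $\psi_\pi(z_1,\ldots,z_{2n})$ of Section~\ref{sec:CPL_multi}, defined as a solution of the quantum Knizhnik--Zamolodchikov equation. Its specialization at a geometric progression of spectral parameters, with ratio controlled by the extra parameter $\tau$ and offset encoded by $t$, yields the bivariate refinement $\psi_\pi(\tau,t)$; setting $\tau=1$ recovers $\psi_\pi(t)$, so any factorization established at the refined level descends to the statement.

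First, I would write a multivariate contour-integral representation for $\psi_\pi(\tau,t)$, equivalently for the companion quantity $g_\pi$ that will absorb the ``negative'' factor $Q_\pi$ mentioned in the introduction. The polynomial $\psi_{(\pi)_p}$ is the evaluation at a geometric sequence of $2(n+p)$ spectral parameters, and analytic continuation to a negative integer $p$ forces certain ``outer'' variables to collide with ``inner'' ones. Each collision compatible with an arch in $\mathcal{A}_p^L(\pi)\cup\mathcal{A}_p^R(\pi)$ activates a wheel-type vanishing property of the qKZ solution and produces one simple factor $(t+p)$. The noncrossing structure together with the $\pi\mapsto\pi^*$ symmetry of Theorem~\ref{thm:invar_api} (and the folded-matching picture that goes with it) pair these arches, so the total order of vanishing at $t=-p$ is at least $m_p(\pi)=(|\mathcal{A}_p^L(\pi)|+|\mathcal{A}_p^R(\pi)|)/2$. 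Integrality of the quotient $Q_\pi(t)$ then follows from Gauss's lemma, since $d(\pi)!\,\psi_\pi(t)$ has integer coefficients by Theorem~\ref{zuber} and the extracted linear factors are monic with integer coefficients.

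The main obstacle is the \emph{exact} multiplicity: producing enough wheel vanishings is the easy direction, but ruling out any extra cancellation that would raise the order above $m_p(\pi)$ is delicate. I would resolve this by computing the leading term of the integral formula at $t=-p$ and exhibiting a nonzero combinatorial expression, expected to factor as an $A_\sigma$-type quantity multiplied by a $g$-quantity---in line with the splitting of the negative-$p$ evaluation into a positive and a negative piece advertised in the introduction. Such a factorization forces $Q_\pi(-p)\neq 0$ for every integer $p\in\{1,\ldots,|\pi|-1\}$ and so pins the multiplicities down exactly. The strongest part of the conjecture---that $Q_\pi$ has \emph{no} real roots at all---lies beyond what the collision argument sees directly; I would attack it via the ``polynomial form'' alluded to in the introduction, reformulating absence of roots as a positivity statement on the integrand that, after a suitable contour deformation, can be read off from the multivariate representation. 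I expect the integer-root factorization to come out cleanly from this scheme, while the stronger assertion on $Q_\pi$ will be the genuinely hard step and may have to be addressed only through its refined polynomial incarnation.
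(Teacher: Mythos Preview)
The statement you are trying to prove is labelled \emph{Conjecture} in the paper for a reason: the paper does \emph{not} prove it. What the paper actually establishes (as a byproduct of the proof of Theorem~\ref{thm:dec_mult}) is only the much weaker fact that $\psi_\pi(\tau,-p)=0$ whenever $m_p(\pi)\neq 0$, i.e.\ that $-p$ is a root of multiplicity \emph{at least one}. The exact multiplicity $m_p(\pi)$, the claim that no other real roots occur, and the integrality of $Q_\pi$ all remain open in the paper. So your proposal is not an alternative proof of a known result; it is an attack on an open problem, and should be read as such.

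On the substance of the plan, the mechanism you sketch for producing $m_p(\pi)$ factors of $(t+p)$ is not the one the paper uses and, as stated, is not justified. The paper does not analytically continue in the number of spectral parameters nor argue via ``collisions'' of outer and inner variables; instead it introduces by hand a new contour integral $\Phi_{a,-p}(z)$ whose homogeneous limit equals $\phi_a(\tau,-p)$, shows it is a genuine polynomial, and proves that its inner part lies in $\mathcal{V}_{n-p}$. The wheel condition there is used to identify that vector space, not to generate repeated linear factors in $t$. In particular, nothing in the paper associates one factor $(t+p)$ to each arch in $\mathcal{A}_p^L(\pi)\cup\mathcal{A}_p^R(\pi)$; the argument only yields a \emph{single} vanishing at $t=-p$, coming from the fact that when $m_p(\pi)\neq 0$ the matching $\pi$ cannot be written as $\alpha\circ\beta$, so the corresponding coefficient in the expansion is forced to zero. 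Your claim that ``each collision \ldots\ produces one simple factor $(t+p)$'' would need an independent argument that the vanishing order in $t$ is additive over the arches crossing the cut, and no such statement is available from the qKZ/wheel machinery as presented.

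Your remarks on exact multiplicity and on $Q_\pi$ having no real roots are honestly flagged as speculative, and indeed they are: the factorization $\psi_\pi(\tau,-p)=g_\alpha(\tau)\psi_\beta(\tau)$ that the paper proves holds only when $m_p(\pi)=0$, so it says nothing about the value (or higher derivatives) of $\psi_\pi(\tau,t)$ at $t=-p$ when $m_p(\pi)>0$, and therefore cannot by itself pin down the multiplicity. In short, the lower bound ``multiplicity $\ge 1$'' is what is actually proved; your proposed upgrade to ``$\ge m_p(\pi)$'' and then to ``$=m_p(\pi)$ with no further real roots'' does not follow from the tools developed here.
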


In the context of the $O(n)$ Loop model, it is normal to have an extra parameter $\tau$. We have then $\psi_{\pi} (\tau,t)$ which coincides with $\psi_{\pi} (\tau)$ whenever $\tau = 1$. 
The previous conjecture seems to hold, the only difference being that $Q_\pi$ depends now on $\tau$.

In Section~\ref{sec:deco}, we prove a weaker version of this conjecture: $\psi_\pi (\tau,-p)=0$ if $m_p (\pi)\neq 0$.

\subsection{Values at negative $p$}

We are now interested in the value of the polynomial $\psi_\pi (\tau,-p)$ for integer values $0 \leq p \leq n$.
It has been already conjectured that it vanishes if $m_p (\pi) \neq 0$.

So let $\pi$ be a matching and $p$ such that $m_p (\pi)$ vanishes. 
It means that there are no arches that link the outer part with the inner part of $\pi$.
\Ie we can define a matching sitting in the outer part (denote it by $\alpha$) and an other in the inner part (denote it by $\beta$), as shown in the picture:
\[
 \pi=
 \begin{tikzpicture}[scale=.25,baseline=0pt]
  \fill [blue!10!white] (4,0) ..  controls (4,4) and (-4,4) .. (-4,0) -- (-2,0) .. controls (-2,2) and (2,2) .. (2,0) -- cycle;
  \draw [green, snake=brace, mirror snake, segment amplitude=1pt] (-4,0) -- (-2,0);
  \draw [green, snake=brace, mirror snake, segment amplitude=1pt] (2,0) -- (4,0);
  \draw [black] (-3,-.5) node {\tiny $p$};
  \draw [black] (3,-.5) node {\tiny $p$};
  \draw [black] (0,0) node {$\beta$};
  \draw [black] (0,2) node {$\alpha$};
 \end{tikzpicture}
\]
we introduce the notation $\pi = \alpha \circ \beta$ to describe this situation.
We need one more definition:
\begin{defi}[$g_\pi$] For any matching $\pi$ we define 
\[
g_\pi (\tau):=\psi_{\pi}(\tau,-|\pi|),
\]
and $g_\pi := g_\pi (1)$.
\end{defi}

We are now ready to present the main result of this article:
\begin{thm}[Generalization of Conjecture 3.8 of~\cite{negative}]\label{thm:dec}
 Let $\pi$ be a matching and $p$ be an integer between $1$ and $|\pi|-1$ such that $m_p(\pi)=0$, and write $\pi=\alpha \circ \beta$ with $|\alpha|=p$. We then have the following factorization:
 \[
  \psi_{\pi}(\tau,-p)=  g_\alpha(\tau) \psi_{\beta}(\tau).
 \]
\end{thm}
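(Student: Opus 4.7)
My plan is to work in the multivariate framework of Section~\ref{sec:CPL_multi}, where $\psi_\pi(\tau,t)$ arises as a scalar specialization of a multivariate polynomial $\Psi_\pi$ defined through the quantum Knizhnik--Zamolodchikov equation. Since both sides of the claimed identity are polynomial in $\tau$, it suffices to establish the identity at the multivariate level; moreover, as the introduction hints at multivariate integral formulae for $g_\pi$, one expects $g_\alpha(\tau)$ and $\psi_\beta(\tau)$ to emerge as two independent contributions once the right specialization is made.

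The first step is to translate the hypothesis $m_p(\pi)=0$ into a statement about spectral parameters. Writing $\pi=\alpha\circ\beta$ with $|\alpha|=p$, no arch of $\pi$ links the outer region (positions $\{1,\ldots,p\}\cup\{\hat p,\ldots,\hat 1\}$) with the inner region. This geometric decoupling should translate into an algebraic factorization: under the specialization of spectral parameters that realizes $t=-p$, the polynomial $\Psi_\pi$ ought to split as a product of an outer polynomial in the $2p$ external variables depending only on $\alpha$, times an inner polynomial in the remaining $2(n-p)$ variables depending only on $\beta$. The inner factor should specialize to $\psi_\beta(\tau)$ with no residual shift in $t$, while the outer factor involves $2p$ spectral variables under precisely the specialization that defines $g_\alpha(\tau)=\psi_\alpha(\tau,-|\alpha|)$, since $|\alpha|=p$.

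The main obstacle is establishing the factorization of $\Psi_\pi$ at the $t=-p$ specialization. In contrast with the positive case, where $\psi_{(\pi)_\ell}(\tau,t)=\psi_\pi(\tau,t+\ell)$ from Proposition~\ref{prop:polynomials} is built into the nesting construction, there is no direct recursion producing the $t=-p$ evaluation. I expect the proof to combine wheel/vanishing conditions on $\Psi_\pi$ with a sharp degree count: by Theorem~\ref{zuber}, $d(\pi)=d(\alpha)+d(\beta)$ and the hook-length product $H_{Y(\pi)}$ splits accordingly under $\pi=\alpha\circ\beta$, so any candidate factorization with the correct leading behavior is forced to be the true answer. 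Making this rigorous while tracking how the wheel conditions survive the specialization is, I expect, the main technical core; the integral formula for $g_\alpha(\tau)$ promised in the introduction should then provide the clean identification of the outer factor.
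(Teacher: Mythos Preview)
Your high-level instinct—pass to the multivariate setting, separate inner and outer spectral variables, and use wheel conditions plus degree counts to force a product structure—matches the paper's strategy in spirit. But the plan as written has a genuine gap at its first step.

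There is no ``specialization of spectral parameters that realizes $t=-p$'' for $\Psi_\pi$. For nonnegative $p$ one has $\psi_\pi(\tau,p)=\Psi_{(\pi)_p}(1,\ldots,1)$ with $2(n+p)$ variables; for $p<0$ nothing of the sort exists, and this is precisely the difficulty. The paper does not specialize $\Psi_\pi$; instead it \emph{defines} a new multivariate object $\Psi_{\pi,-p}(z)$ indirectly, by writing down an explicit modified contour integral $\Phi_{a,-p}(z)$ (with extra factors $\prod_j (qw_i-q^{-1}z_{\hat\jmath})/(qz_j-q^{-1}w_i)$ that specialize to $(1+\tau u_i)^{-p}$ in the homogeneous limit) and then setting $\Psi_{\pi,-p}:=\sum_a C^{-1}_{\pi,a}(\tau)\,\Phi_{a,-p}$. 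Polynomiality, the wheel condition in the inner variables, and the degree bounds are all proved for $\Phi_{a,-p}$, not for $\Psi_\pi$; they are not inherited from anything you already know.

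Consequently the factorization is established at the $\Phi$ level, not at the $\Psi$ level: one shows $\Phi_{b\bullet c,-p}(z)=\Phi_{b,-p}(z^{(O)})\sum_\beta C_{c,\beta}(\tau)\Psi_\beta(z^{(I)})$ via a recursion on small arches of $\beta$ (Lemma~\ref{lema:prec}), then $\Phi_{b,-p}=\sum_\alpha C_{b,\alpha}(\tau)G_\alpha$, and finally uses the combinatorial splitting $C_{b\bullet c,\alpha\circ\beta}(\tau)=C_{b,\alpha}(\tau)C_{c,\beta}(\tau)$ of the change-of-basis matrix to invert. Your proposed route—matching leading coefficients via $d(\pi)=d(\alpha)+d(\beta)$ and hook-length factorization—controls only the top-degree behavior of $\psi_\pi(\tau,t)$ in $t$ and says nothing about the value at a specific negative integer; it cannot replace the explicit construction of $\Phi_{a,-p}$ and the recursion/splitting lemmas.
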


Notice that we are reducing the number of unknowns from $c_n$ to $c_p+c_{n-p}-1$.
The proof is postponed to Section~\ref{sec:deco}.

\subsection{Sum rules}
It has been proved in~\cite{tese} that these numbers $g_\pi (\tau)$ have some interesting properties.
For example $g_{\pi} (-\tau) = (-1)^{d(\pi)} g_{\pi} (\tau)$ and
\begin{thm}[\cite{tese}]\label{thm:sum_Gtau}
We have the sum rule:
\[
 \sum_{\pi:|\pi|=n} g_\pi (\tau) = \sum_{\pi:|\pi|=n} \psi_\pi (-\tau).
\]
\end{thm}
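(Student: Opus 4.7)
The plan is to lift both sides of the identity to the multivariate setting. Let $\Psi_\pi(z_1,\ldots,z_{2n};\tau)$ denote the polynomial $qKZ$ solution associated with $\pi$ that will be introduced in Section~\ref{sec:CPL_multi}; by construction, the univariate polynomial $\psi_\pi(\tau,t)$ is recovered from $\Psi_\pi$ as a one-parameter family of homogeneous specializations of the spectral parameters $z_i$. The key input is the multivariate sum rule valid at this level,
\[
\sum_{|\pi|=n} \Psi_\pi(z_1,\ldots,z_{2n};\tau) \;=\; Z_n(z_1,\ldots,z_{2n};\tau),
\]
where $Z_n$ is an explicit symmetric polynomial of Izergin--Korepin type, admitting a determinantal/product formula.

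Next, I would express both sides of the theorem as specializations of this single quantity $Z_n$. The right-hand side $\sum_\pi \psi_\pi(-\tau)$ arises from the standard homogeneous specialization with $\tau$ replaced by $-\tau$. The left-hand side $\sum_\pi g_\pi(\tau) = \sum_\pi \psi_\pi(\tau,-n)$ corresponds instead to a \emph{singular} specialization, in which the nested-arch parameter $t$ is set to $-n$ and many of the spectral parameters collide. Matching these two evaluations of $Z_n$ should then follow from a structural $\mathbb{Z}_2$-symmetry of $Z_n$ under $\tau\leftrightarrow -\tau$, combined with a suitable involution of the $z_i$ --- properties that are ultimately inherited from the $R$-matrix of the underlying integrable model.

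The main obstacle will be making the singular specialization $t=-n$ precise. The homogeneous construction only gives $\psi_\pi(\tau,t)$ directly for nonnegative integer $t$; extension to $t=-n$ is \emph{a priori} only by polynomial continuation, which does not by itself plug into the closed form of $Z_n$. The natural remedy is to obtain an explicit representation of $g_\pi(\tau)$, most likely a multivariate contour-integral formula of the kind alluded to in the Introduction as a byproduct of the proof technique. Once such a representation is in hand, the identity should follow from contour manipulations together with the symmetry of $Z_n$. Tracking signs is crucial throughout, as the parity relation $g_\pi(-\tau) = (-1)^{d(\pi)}g_\pi(\tau)$ mentioned just above the statement forces a delicate cancellation pattern on the left-hand side that must be reproduced on the right after the $\tau\mapsto -\tau$ substitution.
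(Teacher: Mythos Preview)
Your proposal is not yet a proof; it has a real gap at the central step. The hoped-for mechanism --- a $\mathbb{Z}_2$-symmetry of the Izergin--Korepin partition function $Z_n$ exchanging the homogeneous limit at $-\tau$ with some ``singular specialization'' realizing $t=-n$ --- is speculative. In fact the multivariate polynomials $\Psi_{\pi,-p}(z)$ that this paper constructs are \emph{not} obtained from $\Psi_\pi(z)$ by any specialization of the spectral parameters; they are defined through a genuinely new family of contour integrals $\Phi_{a,-p}$. So there is no evident way to write $\sum_\pi g_\pi(\tau)$ as an evaluation of $Z_n$, and without that your matching argument never gets off the ground. You essentially acknowledge this when you fall back on ``a multivariate contour-integral formula'' for $g_\pi$, but at that point the $Z_n$-symmetry storyline has been abandoned and nothing concrete replaces it.

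What the paper (following \cite{tese}) actually does is quite different and avoids $Z_n$ entirely. One passes to the $\Phi_a$-basis: using the triangularity of $C_{a,\pi}(\tau)$ and the argument of \cite[Section~3.3]{artic41}, the sum $\sum_\pi \psi_\pi(\tau,t)$ collapses to $\sum_{a\in\mathcal{L}_n}\phi_a(\tau,t)$, where $\mathcal{L}_n$ is the set of sequences with $a_i\in\{2i-2,2i-1\}$. At $t=0$ and at $t=-n$ this gives two explicit $n$-fold contour integrals in the $u_i$ variables, namely
\[
\sum_\pi g_\pi(\tau)=\oint\!\cdots\!\oint \prod_i \frac{du_i}{2\pi i\,u_i^{2i-1}}(1+u_i)(1+\tau u_i)^{-n}\prod_{j>i}(u_j-u_i)(1+\tau u_j+u_iu_j)
\]
and the analogous formula for $\sum_\pi \psi_\pi(\tau)$. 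The identity between the two (after $\tau\mapsto -\tau$ on one side) is then proved \emph{combinatorially}: both integrals are recognized as weighted enumerations of TSSCPPs, once via the standard nonintersecting lattice paths and once via the ``dual paths'' of \cite{tese}, and a Lindstr\"om--Gessel--Viennot argument shows they coincide. That bijective TSSCPP step is the real content, and it is entirely absent from your outline.
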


This can be used to partially prove Conjecture 3.11 of~\cite{negative}.
Notice that, according to the Conjecture~\ref{conj:realroots}, $(-1)^{d(\pi)}g_\pi = |g_\pi|$.
\begin{thm}\label{thm:sum_G}
For any positive integer $n$, we have
\begin{align}
\sum_{\pi:|\pi|=n} (-1)^{d(\pi)} g_\pi &= A_n \label{eq:sum_G_1} \\
\sum_{\pi:|\pi|=n} g_\pi &= (-1)^{\frac{n(n-1)}{2}}\left(A^V_n\right)^2 \label{eq:sum_G_2},
\end{align}
where $d(\pi)$ is the number of boxes of the Young diagram $Y(\pi)$.
\end{thm}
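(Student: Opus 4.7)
The idea is to derive both sum rules from Theorem~\ref{thm:sum_Gtau} together with the parity relation $g_\pi(-\tau)=(-1)^{d(\pi)}g_\pi(\tau)$ stated just before that theorem.

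For~\eqref{eq:sum_G_1}, evaluating the parity relation at $\tau=1$ yields $g_\pi(-1)=(-1)^{d(\pi)}g_\pi$, so
\[
\sum_{\pi:|\pi|=n}(-1)^{d(\pi)}g_\pi=\sum_{\pi:|\pi|=n}g_\pi(-1).
\]
Applying Theorem~\ref{thm:sum_Gtau} at the specialization $\tau=-1$ (legitimate since both sides are polynomials in $\tau$), the right-hand side equals $\sum_\pi\psi_\pi(1)=\sum_\pi\psi_\pi=A_n$ by Theorem~\ref{th:propPsipi}. So~\eqref{eq:sum_G_1} is essentially immediate.

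For~\eqref{eq:sum_G_2}, applying Theorem~\ref{thm:sum_Gtau} directly at $\tau=1$ gives
\[
\sum_{\pi:|\pi|=n}g_\pi=\sum_{\pi:|\pi|=n}\psi_\pi(-1),
\]
so the claim reduces to the specialization sum rule
\[
\sum_{\pi:|\pi|=n}\psi_\pi(-1)=(-1)^{n(n-1)/2}(A_n^V)^2.
\]

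This is where the real work lies. The plan is to pass to the multivariate polynomial $\Psi(z_1,\dots,z_{2n})$ introduced in Section~\ref{sec:CPL_multi} and interpret $\sum_\pi\psi_\pi(\tau)$ as a homogeneous specialization of the symmetric polynomial $\sum_\pi\Psi_\pi(z_1,\dots,z_{2n})$. At $\tau=-1$, the qKZ exchange parameters degenerate in a way that makes this sum factor as a square; concretely, one expects to recognise it as an Izergin--Korepin-type determinant for the 6-vertex model with reflecting/$U$-turn boundary conditions, which by Kuperberg's formula enumerates vertically symmetric ASMs. The sign $(-1)^{n(n-1)/2}$ should emerge from a Vandermonde-like prefactor as the inhomogeneities are sent to the homogeneous limit. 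The main obstacle is carrying out this factorisation cleanly in the conventions of the present paper: matching signs and normalisations with the existing $O(n)$-loop-model sum rules in the literature (Di Francesco--Zinn-Justin) is the delicate bookkeeping step, whereas the parity-based argument for~\eqref{eq:sum_G_1} is essentially free once Theorem~\ref{thm:sum_Gtau} is in hand.
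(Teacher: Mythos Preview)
Your treatment of~\eqref{eq:sum_G_1} is exactly the paper's: specialise Theorem~\ref{thm:sum_Gtau} at $\tau=-1$, use $g_\pi(-1)=(-1)^{d(\pi)}g_\pi$, and invoke $\sum_\pi\psi_\pi=A_n$. Nothing to add.

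For~\eqref{eq:sum_G_2} you correctly reduce to $\sum_\pi g_\pi=\sum_\pi\psi_\pi(-1)$ via Theorem~\ref{thm:sum_Gtau} at $\tau=1$, which is also what the paper does. After that, however, your proposal is only a plan (``one expects to recognise\dots'', ``the main obstacle is\dots''), not a proof: passing to the multivariate $\Psi_\pi(z)$, identifying the sum as an Izergin--Korepin/U-turn determinant, and matching Kuperberg's VSASM formula in the homogeneous limit is a legitimate strategy in principle, but you have not carried out any of it, and the bookkeeping you yourself flag as delicate is precisely the content of such an argument. As written this step is a gap.

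The paper avoids this machinery entirely. It uses the contour-integral expression
\[
A_n(x)=\sum_{i=1}^n A_{n,i}\,x^{i-1}=\oint\!\cdots\!\oint\prod_i\frac{du_i}{2\pi i\,u_i^{2i-1}}(1+xu_i)\prod_{j>i}(u_j-u_i)(1+u_j+u_iu_j)
\]
for the refined ASM count (Zeilberger, Di~Francesco--Zinn-Justin), observes by a change of variables $u_i\mapsto -u_i$ that $\sum_\pi\psi_\pi(-1)=(-1)^{\binom{n}{2}}A_n(-1)$, and then evaluates $A_n(-1)$ by a short, purely elementary generating-function computation from Zeilberger's closed form for $A_{n,i}$: one recognises $\sum_i(-1)^{i-1}A_{n,i}$ as the coefficient of $x^{n-1}$ in $(1-x^2)^{-n}$ times the ASM product, which vanishes for even $n$ and gives $\binom{3m}{m}(2m)!\prod_{j=0}^{2m-1}\frac{(3j+1)!}{(2m+j+1)!}=(A_{2m+1}^V)^2$ for $n=2m+1$. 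No six-vertex partition functions or Kuperberg determinants are needed. If you want to complete your write-up, I would recommend replacing the multivariate sketch by this direct $(-1)$-enumeration argument.
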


The first equation~\eqref{eq:sum_G_1} has been proved in~\cite{tese}, and it follows from Theorem~\ref{thm:sum_Gtau}. Here we will prove the second equation~\ref{eq:sum_G_2}. The point is that $\sum_{\pi} g_{\pi} = \sum_{\pi} \psi_{\pi} (-1)$ is equivalent to the minus enumeration of TSSCPP which appears in Di Francesco's article~\cite{DF-qKZ-TSSCPP}, and this can be computed, see Section~\ref{sec:-enum} for a better explanation.

\section{Multivariate solutions of the $O(n)$ Loop model}\label{sec:CPL_multi}

In this section, we briefly describe a multivariate version of the $O(n)$ Loop model. 
This version, although more complicated, is useful to the proof of Theorem~\ref{thm:dec}.

The $O(n)$ model is an integrable model, meaning that there is an operator called $\check{R}$-Matrix which obeys to the Yang--Baxter equation.
We can add new parameters $\{z_1,z_2,\ldots,z_{2n}\}$, called spectral parameters, which will characterize each column.
In this multivariate setting, the groundstate depends on the $2n$ spectral parameters (and in an extra parameter $q$), in fact the components of the groundstate can be normalized such that they are homogeneous polynomials $\Psi_\pi (z_1,\ldots,z_{2n})$ of degree $n(n-1)$.
The important fact about these solutions is that we recover the solutions of the $O(n)$ model, as stated in Section~\ref{sub:O1}, in the limit $z_i=1$ for all $i$, and $q=e^{2\pi i/3}$.

We shall not describe this model in detail here, such detailed description can be found in~\cite{artic47,hdr,artic43,tese}.

In order to simplify notation we will use $z=\{z_1,\ldots,z_{2n}\}$, thus we will often write $\Psi_\pi (z)$. 
Notice that these polynomials depend on $q$, but we omit this dependence.

\subsection{The quantum Knizhnik--Zamolodchikov equation}\label{sec:qKZ}

The groundstate of the multivariate $O(n)$ model is known to solve the quantum Knizhnik--Zamolodchikov (qKZ) equation in the special value $q=e^{2\pi i/3}$.
See a complete explanation in~\cite{hdr,tese}.

The qKZ equation was firstly introduced in a paper by Frenkel and Reshetkhin~\cite{FR-qkz}.
Here we use the version introduced by Smirnov~\cite{Smi}. 
Let the $\check{R}$-Matrix be the following operator, 
\[
 \check{R}_i(z_i,z_{i+1}) = \frac{q z_{i+1}-q^{-1}z_i}{q z_i-q^{-1} z_{i+1}} Id + \frac{z_{i+1}-z_i}{q z_i - q^{-1}z_{i+1}} e_i.\label{eq:R-matrix}
\]

The quantum Knizhnik--Zamolodchikov equation:
\begin{itemize}
 \item The \emph{exchange} equation:
\begin{equation}\label{eq:exc}
\check{R}_i(z_i,z_{i+1}) \Psi(z_1,\ldots,z_i,z_{i+1},\ldots,z_{2n}) = \Psi(z_1,\ldots,z_{i+1},z_i.\ldots,z_{2n}),
\end{equation}
for $i=1,\ldots,2n$.

 \item The \emph{rotation} equation:
\begin{equation}\label{eq:rot}
 \rho^{-1} \Psi (z_1,z_2,\ldots,z_{2n})=\kappa \Psi (z_2,\ldots,z_{2n},s z_1),
\end{equation}
where $\kappa$ is a constant such that $\rho^{-2n}=1$. In our case $s=q^6$ and $\kappa=q^{3(n-1)}$.
\end{itemize}

\subsection{Solutions of the quantum Knizhnik--Zamolodchikov equation}\label{sec:sol_qKZ}

We start for pointing down some properties of the solutions of the qKZ equation without proof.
\begin{itemize}
\item The solutions are homogeneous polynomials in $2n$ variables;
\item The total degree is $n(n-1)$ and the individual degree in each $z_i$ is $n-1$;
\item They obey to the \emph{wheel condition}:
\[
 \left.P(z_1,\ldots,z_{2n})\right|_{z_k=q^2 z_j = q^4 z_i} =0 \qquad \forall \, k>j>i.
\] 
\end{itemize}

In fact these three properties define a vector space:

\begin{defi}[$\mathcal{V}_n$]
 We define $\mathcal{V}_n$ as the vector space of all homogeneous polynomials in $2n$ variables, with total degree $\delta=n(n-1)$ and individual degree $\delta_i=n-1$ in each variable which obey to the \emph{wheel condition}.
\end{defi}

This vector space has dimension $c_n$, exactly the number of matchings of size $|\pi|=n$. 
Moreover, the polynomials $\Psi_\pi(z)$ verify the following important lemma:

\begin{lemma}[\cite{artic41}]\label{lem:dual}
Let $q^\epsilon=\{q^{\epsilon_1},\ldots,q^{\epsilon_{2n}}\}$, where $\epsilon_i=\pm 1$ are such that changing $q^{-1}$ into ``$($'' and changing $q$ into ``$)$'' gives a valid parenthesis word $\pi(\epsilon)$. Then 
\[
 \Psi_\pi(q^\epsilon) = \tau^{d(\pi)} \delta_{\pi,\epsilon},
\]
where $\delta_{\pi,\epsilon}=1$ when we have $\pi(\epsilon)=\pi$. $\tau$ is related with $q$ by the formula $\tau=-q-q^{-1}$.
\end{lemma}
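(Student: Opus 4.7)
The plan is to induct on $n=|\pi|$, the case $n=0$ being immediate. Two complementary mechanisms drive the induction: a vanishing derived from the exchange equation~\eqref{eq:exc}, and a reduction derived from the wheel condition. Since $\pi(\epsilon)$ is a valid parenthesis word with $n\geq 1$ arches, it must contain at least one small arch, so I fix an index $i$ with $\epsilon_i=-1$ and $\epsilon_{i+1}=1$. The two sub-cases of the induction then correspond to whether $\pi$ itself has a small arch at $\{i,i+1\}$.

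First I would extract the vanishing identity. Clearing the denominator $qz_i-q^{-1}z_{i+1}$ in~\eqref{eq:exc} and evaluating at $(z_i,z_{i+1})=(q^{-1},q)$, where that denominator vanishes and $q^2-q^{-2}=-(q-q^{-1})\tau$, yields the component identity
\[
 (e_i\Psi)_\pi\big|_{z_i=q^{-1},\,z_{i+1}=q}=\tau\,\Psi_\pi\big|_{z_i=q^{-1},\,z_{i+1}=q}.
\]
Now the matrix element $(e_i)_{\pi,\sigma}$ is nonzero only when $e_i\sigma=\pi$, and any such $\pi$ necessarily has a small arch at $\{i,i+1\}$. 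Hence if $\pi$ does not, then $(e_i\Psi)_\pi\equiv 0$ as a polynomial, and the identity forces $\Psi_\pi(q^\epsilon)=0$; since $\pi(\epsilon)$ does have the small arch here, $\pi\neq\pi(\epsilon)$, matching the right-hand side of the lemma.

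In the remaining case, where $\pi$ has a small arch at $\{i,i+1\}$, I would invoke the standard recursion following from the wheel condition. Writing $\pi'=\pi\setminus\{i,i+1\}$ and $\widehat z=(z_1,\ldots,\widehat{z_i},\widehat{z_{i+1}},\ldots,z_{2n})$, the wheel condition at $z_{i+1}=q^2z_i$ forces an explicit product of linear forms to divide $\Psi_\pi|_{z_{i+1}=q^2 z_i}$; comparing degrees inside $\mathcal V_n$ produces
\[
 \Psi_\pi(z)\big|_{z_{i+1}=q^2 z_i}=F_i(z_i;\widehat z)\cdot \Psi_{\pi'}(\widehat z)
\]
for an explicit product $F_i$. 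Specializing $z_\ell=q^{\epsilon_\ell}$ for $\ell\neq i,i+1$ (with $\epsilon'$ the truncated $\epsilon$) and applying the inductive hypothesis for $\Psi_{\pi'}$ gives $\Psi_\pi(q^\epsilon)=F_i(q^{-1};q^{\epsilon'})\,\tau^{d(\pi')}\,\delta_{\pi',\pi'(\epsilon')}$. Because both $\pi$ and $\pi(\epsilon)$ share the small arch $\{i,i+1\}$, the condition $\pi=\pi(\epsilon)$ is equivalent to $\pi'=\pi'(\epsilon')$, so the Kronecker delta is the correct one. The induction closes once one checks
\[
 F_i(q^{-1};q^{\epsilon'})=\tau^{d(\pi)-d(\pi')},
\]
which comes down to identifying $d(\pi)-d(\pi')$ with the number of arches of $\pi$ enclosing $\{i,i+1\}$ and tracking which linear factors of $F_i$ evaluate to $\tau$ versus $1$.

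The hard part is establishing the factorization itself: locating exactly the linear forms that the wheel condition demands at $z_{i+1}=q^2z_i$ and checking that what remains is a scalar multiple of $\Psi_{\pi'}$, with the constant pinned down by a single specialization. Once $F_i$ is written out, the identity $F_i(q^{-1};q^{\epsilon'})=\tau^{d(\pi)-d(\pi')}$ is a direct bookkeeping exercise on the arches surrounding $\{i,i+1\}$. Both ingredients follow the pattern of~\cite{artic41}, which I would adapt essentially verbatim.
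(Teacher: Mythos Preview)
The paper does not give its own proof of Lemma~\ref{lem:dual}: it is quoted from~\cite{artic41} and used as a black box, so there is no in-paper argument to compare yours against. Your outline is precisely the standard proof from~\cite{artic41}: the exchange relation~\eqref{eq:exc} cleared of its denominator and specialized at $(z_i,z_{i+1})=(q^{-1},q)$ gives $(e_i\Psi)_\pi=\tau\,\Psi_\pi$ there, which kills $\Psi_\pi(q^\epsilon)$ whenever $\pi$ lacks the small arch $\{i,i+1\}$; and when $\pi$ does have that arch, the wheel condition at $z_{i+1}=q^2z_i$ forces the factorization
\[
\Psi_\pi\big|_{z_{i+1}=q^2z_i}\;=\;\mathrm{(const)}\prod_{\ell<i}(qz_\ell-q^{-1}z_i)\prod_{\ell>i+1}(q\,q^2z_i-q^{-1}z_\ell)\,\Psi_{\pi'}(\widehat z),
\]
after which the evaluation $F_i(q^{-1};q^{\epsilon'})=\tau^{d(\pi)-d(\pi')}$ is exactly the bookkeeping you describe (each $\ell<i$ with $\epsilon_\ell=+1$ and each $\ell>i+1$ with $\epsilon_\ell=-1$ contributes a factor $-q-q^{-1}=\tau$, the others contribute $q-q^{-1}$ and are absorbed into the normalization; the count of $\tau$-factors is the number of arches enclosing $\{i,i+1\}$, which equals $d(\pi)-d(\pi')$). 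The only point worth flagging is that identifying the quotient with $\Psi_{\pi'}$ (and not some other element of $\mathcal V_{n-1}$) requires one more use of the qKZ system, not just a degree count, but you acknowledge this and it is carried out in~\cite{artic41}. Your proposal is correct.
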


Since there are $c_n$ polynomials $\Psi_\pi(z)$, this lemma shows that these polynomials form a basis of $\mathcal{V}_n$. 
Thus a polynomial in this space is determined by its value on these points $q^\epsilon$.

\subsection{A different approach}\label{sec:base_a}
We now define another set of polynomials, introduced in~\cite{artic41}, $\Phi_a (z_1,\ldots,z_{2n})$ (indexed by the increasing sequences defined earlier), by the integral formula:
\begin{multline}\label{eq:qKZ_var}
 \Phi_a(z)= k_n
 \prod_{1\le i<j\le 2n} (qz_i-q^{-1}z_j)\\ \times \oint\ldots\oint \prod_{i=1}^n \frac{dw_i}{2\pi i} \frac{\prod_{1\le i<j\le n}(w_j-w_i)(qw_i-q^{-1}w_j)}{\prod_{1\le k\leq a_i}(w_i-z_k)\prod_{a_i<k\le 2n}(qw_i-q^{-1}z_k)},
\end{multline}
where the integral is performed around the $z_i$ but not around $q^{-2} z_i$, and $k_n=(q-q^{-1})^{-n(n-1)}$.

It is relatively easy to check that these polynomials belong to the vector space $\mathcal{V}_n$, so we can write:
\[
\Phi_a (z)=\sum_\pi C_{a,\pi} (\tau) \Psi_\pi (z),
\]
where $C_{a,\pi}(\tau)$ are the coefficients given by the formula:
\[
 \Phi_a (q^\epsilon) = \tau^{d(\epsilon)} C_{a,\epsilon} (\tau).
\]

An algorithm to compute the coefficients $C_{a,\pi} (\tau)$ is given in~\cite[Appendix A]{artic41}. 
We just need the following facts:
\begin{prop}[{\cite[Lemma 3]{artic47}}] 
\label{prop:Bases}
 Let $a$ and $\pi$ be two matchings. Then we have:
\[
 C_{a,\pi}(\tau)=\begin{cases}
             0 & \textrm{if } \pi \npreceq a;\\
             1 & \textrm{if } \pi=a;\\
             P_{a,\pi} (\tau) & \textrm{if } \pi \prec a,
            \end{cases}
\]
where $P_{a,\pi}(\tau)$ is a polynomial in $\tau$ with degree $\leq d(a)-d(\pi)-2$.
\end{prop}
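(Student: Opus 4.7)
The entire proposition reduces, via Lemma~\ref{lem:dual} and the expansion $\Phi_a = \sum_\pi C_{a,\pi}(\tau)\Psi_\pi$, to computing $\Phi_a(q^\epsilon) = \tau^{d(\epsilon)}C_{a,\epsilon}(\tau)$ for every parenthesis word $\epsilon$. My plan is to evaluate this value directly from the integral formula~\eqref{eq:qKZ_var} by iterated residues, and then extract the three claims from the resulting residue structure.

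\textbf{Step 1 (reduction to a sum over injections).} Substitute $z_k = q^{\epsilon_k}$ in~\eqref{eq:qKZ_var}. For each $i$ the contour of $w_i$ encloses only the simple poles $w_i = z_k$ with $k \leq a_i$ (the poles at $q^{-2}z_k$ for $k > a_i$ lie outside), and the Vandermonde-type factor $\prod_{i<j}(w_j - w_i)$ forces the residue assignment to be injective. Consequently $\Phi_a(q^\epsilon)$ becomes a finite sum indexed by injections $\sigma\colon\{1,\ldots,n\}\to\{1,\ldots,2n\}$ with $\sigma(i)\leq a_i$, each contributing a specific rational expression in $q$.

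\textbf{Step 2 (triangularity and diagonal value).} I would then identify which $\sigma$ produce nonzero residues after the specialization. The prefactor $\prod_{i<j}(qz_i - q^{-1}z_j)$ vanishes at $q^\epsilon$ exactly when $(\epsilon_i,\epsilon_j)=(-1,+1)$ for some $i<j$, so the $w$-integrand must develop compensating poles in order to have a finite limit. A careful matching of these zeros and poles shows that in every surviving term $\sigma(i)$ lands on the $i$-th opening position of $\epsilon$, namely $\sigma(i) = a_i(\pi)$. Combined with the constraint $\sigma(i)\leq a_i$, this forces $a_i(\pi)\leq a_i$ for all $i$, i.e.\ $\pi\preceq a$, proving $C_{a,\pi}(\tau)=0$ when $\pi\npreceq a$. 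For $\pi = a$ the only contribution is $\sigma = \mathrm{id}$, and a direct residue computation collapses to $\tau^{d(a)}$, giving $C_{a,a}(\tau)=1$.

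\textbf{Step 3 (degree bound).} For $\pi\prec a$, I would expand the surviving contributions in powers of $q$ and $q^{-1}$, use the symmetry $q\mapsto -q^{-1}$ of the integrand (which permutes its factors up to signs) to conclude that $C_{a,\pi}(\tau)$ is a polynomial in $\tau = -q - q^{-1}$, and count degrees. The naive count yields $\tau^{d(\pi)}$ times something of $\tau$-degree $\leq d(a)-d(\pi)$; a parity argument then eliminates an extra factor of $\tau^2$ from every non-diagonal term, giving the sharp bound $\deg P_{a,\pi}(\tau) \leq d(a)-d(\pi)-2$. The main obstacle is exactly this sharp $-2$: it records a hidden factor of $q+q^{-1}$ in every non-diagonal residue, and isolating it cleanly, either by a careful term-by-term expansion along chains in the partial order or by a symmetry argument around $q\leftrightarrow -q^{-1}$, is what I expect to take the most effort.
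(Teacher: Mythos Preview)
The paper does not prove this proposition itself; it quotes it from~\cite{artic47}. The method behind that reference, and the one the present paper relies on throughout (see Lemma~\ref{lema:0rec} in the appendix), is \emph{recursive}: one locates a small arch $(j,j+1)$ of $\pi$, uses the vanishing of the prefactor at $z_{j+1}=q^2z_j$ to force a residue $w_\zeta=z_j$ with $a_\zeta=j$, and reduces to a problem of size $n-1$, picking up a factor $[s]$ (the quantum integer, where $s=\#\{i:a_i=j\}$) via the antisymmetrisation of Lemma~\ref{lema:anti}. The three claims then drop out: some step with $s=0$ gives $C=0$; every step with $s=1$ gives $C_{a,a}=1$; and the $\tau$--degree is $\sum_k(s_k-1)$, which one bounds combinatorially by $d(a)-d(\pi)-2$ when $\pi\prec a$.

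Your all--at--once residue plan is a genuinely different route, and Step~1 is fine. The gap is in Step~2. The assertion that ``in every surviving term $\sigma(i)=a_i(\pi)$'' --- i.e.\ that a \emph{single} injection survives --- is not correct. After the specialisation $z_k=q^{\epsilon_k}$ the individual residues $R_\sigma(z)$ become $0/0$ indeterminate forms (the Vandermonde $\prod(z_{\sigma(j)}-z_{\sigma(i)})$ and the pole factors $\prod_{k\le a_i,\,k\ne\sigma(i)}(z_{\sigma(i)}-z_k)$ both vanish, since many $z_k$ coincide), so there is no meaningful ``surviving $\sigma$'' before one sums. Concretely, for $a=\{1,3,4,5\}$ and $\pi=\{1,2,3,4\}$ the recursion gives $C_{a,\pi}=[2]=-\tau$, and this $[2]$ arises precisely from combining \emph{two} residue choices at the second step; in your picture these correspond to distinct $\sigma$'s mapping into the opening set $\{1,2,3,4\}$, none of which has a finite individual limit. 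What \emph{is} salvageable is a weaker statement: one can argue that only $\sigma$'s whose image lies in the opening positions of $\pi$ contribute, and then a pigeonhole argument (any bijection onto $\{a_1(\pi),\ldots,a_n(\pi)\}$ with $\sigma(i)\le a_i$ forces $a_i(\pi)\le a_i$) still yields the triangularity. But to actually evaluate the sum and get the degree bound of Step~3 you will be forced to organise the limit carefully --- and the natural way to do that is exactly the one--arch--at--a--time recursion of Lemma~\ref{lema:0rec}, which also makes the sharp $-2$ transparent.
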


Moreover, we have
\begin{equation}
\label{eq:capi-tau}
 C_{a,\pi}(\tau)=(-1)^{d(a)-d(\pi)} C_{a,\pi}(-\tau),
\end{equation}
since it is a product of polynomials $U_s$ in $\tau$ with degree of the form $d(a)-d(\pi)-2k$, $k\in \mathbb{N}$, and parity given by $d(a)-d(\pi)$: this is an easy consequence of~\cite[p.12 and Appendix C]{artic47}. 

By abuse of notation, we write $(a)_p$ to represent $\{1,\ldots,p,p+a_1,\ldots,p+a_n\}$, since this corresponds indeed to adding $p$ nested arches to $\pi(a)$ via the bijections of Section~\ref{sec:defi}. Then 
one easy but important lemma for us is the following:

\begin{lemma}[{\cite[Lemma 4]{artic47}}]
\label{lem:sta} 
The coefficients $C_{a,\pi}(\tau)$ are stable, that is:
  \[
   C_{(a)_p,(\pi)_p}(\tau)=C_{a,\pi}(\tau) \qquad \forall p\in \mathbb{N}.
 \]
\end{lemma}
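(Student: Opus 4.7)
The plan is to reduce to $p=1$ by iteration, since $(a)_p = ((a)_1)_{p-1}$ and repeated application of the case $p=1$ yields the general statement. Throughout the proof, I will use the characterization furnished by Lemma~\ref{lem:dual}: if $\epsilon = \epsilon(\pi)$ is the sign sequence encoding $\pi$ as a parenthesis word, then
\[
 \Phi_a(q^\epsilon) = \tau^{d(\pi)} C_{a,\pi}(\tau).
\]
Since wrapping a matching by a single nested arch leaves the Young diagram (and hence $d(\pi)$) unchanged, the desired equality $C_{(a)_1,(\pi)_1}(\tau) = C_{a,\pi}(\tau)$ is equivalent to the functional identity
\[
 \Phi_{(a)_1}(q^{\epsilon'}) = \Phi_a(q^\epsilon),
\]
where $\epsilon'=(-1,\epsilon_1,\ldots,\epsilon_{2n},+1)$ corresponds to the parenthesis word of $(\pi)_1$.

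The next step is to exploit the integral formula (\ref{eq:qKZ_var}) for $\Phi_{(a)_1}$ under the specialization $z_1 \mapsto q^{-1}$ and $z_{2n+2} \mapsto q$ (with the remaining $z_2,\ldots,z_{2n+1}$ kept as indeterminates). Because $(a)_1 = (1, 1+a_1, \ldots, 1+a_n)$, the extra integration variable $w_1$ carries the denominator factor $(w_1 - z_1)\prod_{k=2}^{2n+2}(qw_1 - q^{-1} z_k)$, which under the specialization has a simple pole at $w_1 = q^{-1}$; after performing this single residue, the $(n+1)$-fold integral collapses to an $n$-fold integral in $w_2,\ldots,w_{n+1}$. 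A careful comparison shows that this residue together with the specialized prefactor $\prod_{1\le i<j\le 2n+2}(qz_i - q^{-1}z_j)$ — which, for instance, provides the compensating factor for $qz_1 - q^{-1}z_{2n+2} = 0$ through the residue pole — collapses to exactly the prefactor and integrand of $\Phi_a(z_2,\ldots,z_{2n+1})$, multiplied by the variable shift $w_i \mapsto w_{i-1}$.

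The main obstacle is the bookkeeping around the contour prescription. The rule is that each $w$-contour encircles the $z_k$ but not the $q^{-2}z_k$, and under our specialization we have $q^{-2}z_{2n+2} = z_1 = q^{-1}$, so the two nominally-disjoint poles collide. One must verify that after deforming the contour and picking up only the residue at $w_1 = q^{-1}$, no spurious singular contributions appear elsewhere, and that the contours in $w_2,\ldots,w_{n+1}$ still encircle exactly the remaining $z_2,\ldots,z_{2n+1}$ and avoid the corresponding $q^{-2}z_k$. Once this is settled, the matching with $\Phi_a$ is purely algebraic and finishes the proof; evaluating the resulting equality at $z_i = q^{\epsilon_{i-1}}$ for $i=2,\ldots,2n+1$ then delivers $\Phi_{(a)_1}(q^{\epsilon'}) = \Phi_a(q^\epsilon)$, which is what we wanted.
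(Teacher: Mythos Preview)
Your argument has a genuine gap. The claim that after the $w_1$ residue and the specialization $z_1=q^{-1}$, $z_{2n+2}=q$ the expression ``collapses to exactly the prefactor and integrand of $\Phi_a(z_2,\ldots,z_{2n+1})$'' is false. Carrying out the $w_1$ residue at $z_1$ for generic $z$ and cancelling against the $i=1$ prefactor terms leaves, after the shift $w_l\mapsto w_{l-1}$ and $z_k\mapsto z'_{k-1}$,
\[
\frac{k_{n+1}}{k_n}\,\prod_{i=1}^{2n}(qz'_i-q^{-1}z_{2n+2})\times\Bigl[\text{the }\Phi_a(z')\text{ integral with the extra factor }\prod_{l}\tfrac{qz_1-q^{-1}w_l}{qw_l-q^{-1}z_{2n+2}}\Bigr],
\]
and none of these extra pieces is equal to $1$. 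Already for $n=1$ one computes $\Phi_{(1,2)}(q^{-1},z'_1,z'_2,q)=(q-q^{-1})^{-2}(qz'_2-1)(1-q^{-1}z'_1)$, whereas $\Phi_{(1)}(z'_1,z'_2)=1$; the two agree only after the \emph{further} specialization $z'_1=q^{-1}$, $z'_2=q$. There is therefore no intermediate identity in the inner variables that one may then ``evaluate at $q^\epsilon$''. What remains is precisely to show that the extra factors combine to $1$ at the full specialization $z'=q^\epsilon$, and this is exactly the $0\cdot\infty$ limit you flag as an obstacle: it is not a side issue but the entire content of the computation, and you have not carried it out.

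The route the paper's machinery makes natural (and which underlies the cited Lemma~4 of \cite{artic47}) is shorter and avoids all of this: the coefficients $C_{a,\pi}(\tau)$ are determined by the small-arch recursion of Lemma~\ref{lema:0rec}. Every small arch of $(\pi)_1$ is a small arch of $\pi$ shifted by one position, with the same local multiplicity $s$, so the recursion for $C_{(a)_1,(\pi)_1}$ runs in lockstep with that for $C_{a,\pi}$; the outermost arch only becomes small at the very last step, where it contributes a harmless factor $[1]=1$. Hence $C_{(a)_1,(\pi)_1}(\tau)=C_{a,\pi}(\tau)$ directly, and iteration gives the general $p$.
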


We remark that Proposition~\ref{prop:Bases}, Equation~\eqref{eq:capi-tau} and Lemma~\ref{lem:sta} also hold for the coefficients $\C_{a,\pi}(\tau)$ of the inverse matrix. 

\subsection{The homogeneous limit}\label{sec:pol_qKZ}

The bivariate polynomials $\psi_{\pi}(\tau,t)$ are defined as the homogeneous limit of the previous multivariate polynomials (\ie $z_i=1$ for all $i$). 
Though we are mostly interested in the case $\tau=1$, since we recover the groundstate $\psi_{\pi}(t)=\psi_{\pi}(1,t)$, as explained in~\cite{hdr}, the variable $\tau$ it will be useful for the proofs presented here.

Define $\phi_a (\tau):=\Phi_a (1,\ldots,1)$\footnote{Notice that $\Phi_a(z)$ depends on $q$, even if we do not write it explicitly.}. 
Using variable transformation 
\[
u_i = \frac{w_i-1}{q w_i - q^{-1}},
\]
we obtain the formula:
\begin{equation}
 \phi_{a}(\tau) = \oint \ldots \oint \prod_i \frac{du_i}{2 \pi i u_i^{a_i}} \prod_{j>i} (u_j-u_i) (1+\tau u_j+u_i u_j). 
\end{equation}

Thus, we can then obtain the $\psi_\pi(\tau)$ via the matrix $C(\tau)$:
\begin{align} 
 \phi_a (\tau)=&\sum_\pi C_{a,\pi}(\tau) \psi_\pi (\tau);\label{eq:psiphi1}\\
 \psi_\pi (\tau)=&\sum_a \C_{\pi,a}(\tau) \phi_a (\tau).\label{eq:psiphi2}
\end{align}

Let $\hat{a}_i$ be the components of $(a)_p$. 
Now
\begin{align*}
 \phi_{(a)_p} (\tau) =& \oint\ldots\oint \prod_i^{n+p} \frac{du_i}{2\pi i u_i^{\hat{a}_i}} \prod_{j>i} (u_j-u_i)(1+\tau u_j+u_i u_j)\\
 =& \oint\ldots\oint \prod_i^n \frac{du_i}{2\pi i u_i^{a_i}} (1+\tau u_i)^p \prod_{j>i} (u_j-u_i)(1+\tau u_j+u_i u_j),
\end{align*}
where we integrated in the first $p$ variables and renamed the rest $u_{p+i}\mapsto u_i$. 
This is a polynomial in $p$, and we will naturally note $\phi_{a} (\tau,t)$ the polynomial such that $\phi_{a} (\tau,p)=\phi_{(a)_p}(\tau)$.

Finally, from Equation~\eqref{eq:psiphi2} and Lemma~\ref{lem:sta} we obtain the fundamental equation
\begin{equation}
\label{eq:psitauphitau}
 \psi_\pi (\tau,t) = \sum_a \C_{\pi,a}(\tau) \phi_a (\tau,t).
\end{equation}

In the special case $\tau=1$, we write $C_{a,\pi}=C_{a,\pi}(1)$, $\phi_a (t)=\phi_a (1,t)$ and thus
\[
 A_\pi(t)=\psi_\pi (t) = \sum_a \C_{\pi,a} \phi_a (t),
\]
thanks to the Razumov--Stroganov conjecture~\ref{conj:rs}.

\section{Decomposition formula}\label{sec:deco}

The aim of this section is to prove Theorem~\ref{thm:dec}. 
With this in mind, we introduce two new multivariate polynomials $\Psi_{\pi,-p} (z)$ and $G_\pi (z)$, which generalize the quantities $\psi_\pi (\tau,-p)$ and $g_\pi (\tau)$ respectively.

\subsection{New polynomials}
Let $\pi$ be a matching with size $|\pi|=n$ and $p$ be a nonnegative integer less or equal than $n$. 
We require that the new object $\Psi_{\pi,-p} (z)$ has the following essential properties:
\begin{itemize}
\item It generalizes $\psi_\pi (\tau,-p)$, \ie $\psi_\pi (\tau,-p) = \Psi_{\pi,-p} (1,\ldots,1)$;
\item When $p=0$, we have $\Psi_{\pi,0} (z) = \Psi_\pi (z)$, justifying the use of the same letter;
\item They are polynomials on $z_i$.
\end{itemize}
Thus, we can define a multivariate version of $g_\pi (\tau)$, by 
\begin{equation}
G_\pi (z_1,\ldots,z_{2n}) := \Psi_{\pi,-|\pi|} (z_1,\ldots,z_{2n}),
\end{equation}
such that $g_\pi (\tau) = G_\pi (1,\ldots,1)$.

Surprisingly enough, using these new polynomials we can state a theorem equivalent to Theorem~\ref{thm:dec}:
\begin{thm}\label{thm:dec_mult}
 Let $\pi$ be a matching and $p$ be an integer between $1$ and $|\pi|-1$ such that $m_p(\pi)=0$, and write $\pi=\alpha \circ \beta$ with $|\alpha|=p$. We then have the following factorization:
 \[
  \Psi_{\pi,-p}(z_1,\ldots,z_{2n})=  G_\alpha(z_1,\ldots,z_{p},z_{\hat{p}},\ldots,z_{\hat{1}}) \Psi_{\beta}(z_{p+1},\ldots,z_{\hat{p}-1}).
 \]
\end{thm}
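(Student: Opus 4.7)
The plan is to work entirely within the multivariate $O(n)$ Loop model, proving the factorisation as an identity of polynomials in $z_1,\ldots,z_{2n}$, from which the homogeneous statement of Theorem~\ref{thm:dec} will follow by setting all $z_i = 1$. The first step is to give clean integral definitions of the two new objects $\Psi_{\pi,-p}(z)$ and $G_\pi(z)$ in the spirit of formula~\eqref{eq:qKZ_var}: define a multivariate $\Phi_{a,-p}(z)$ extending $\phi_a(\tau,t)$ to $t=-p$, and set
\[
\Psi_{\pi,-p}(z) := \sum_a \C_{\pi,a}(\tau)\, \Phi_{a,-p}(z),
\]
so that $\Psi_{\pi,0}(z) = \Psi_\pi(z)$ and $\Psi_{\pi,-p}(1,\ldots,1) = \psi_\pi(\tau,-p)$, while polynomiality in the $z_i$ follows from the stability Lemma~\ref{lem:sta} together with contour-integral manipulations.

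Next I would show that both sides of Theorem~\ref{thm:dec_mult} live in a common space of polynomials defined by degree bounds and a wheel-type condition, partitioned according to the cut between the \emph{outer} variables $\{z_1,\ldots,z_p,z_{\hat p},\ldots,z_{\hat 1}\}$ and the \emph{inner} variables $\{z_{p+1},\ldots,z_{\hat p-1}\}$. The right-hand side is manifestly a product across this partition; on the left, the hypothesis $m_p(\pi)=0$ means no arch of $\pi$ crosses the cut, and this should translate into the algebraic vanishing properties that force $\Psi_{\pi,-p}$ into product form.

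The core computation then proceeds by exploiting the analogue of Lemma~\ref{lem:dual} for this product space: both sides are uniquely determined by their values at basis points $z = q^\epsilon$ whose parenthesis word respects the block decomposition. On such $\epsilon$, the multivariate integral defining $\Phi_{a,-p}(q^\epsilon)$ decouples into independent contour integrals over $\{w_1,\ldots,w_p\}$ and $\{w_{p+1},\ldots,w_n\}$ whenever $a$ has the form $\alpha' \circ \beta'$. Combining this with the corresponding factorisation $\C_{\pi,a}(\tau) = \C_{\alpha,\alpha'}(\tau)\,\C_{\beta,\beta'}(\tau)$, derived from Lemma~\ref{lem:sta} and the block structure of the order $\preceq$ (indices $a$ with $\pi \not\preceq a$ drop out by Proposition~\ref{prop:Bases}), the sum defining $\Psi_{\pi,-p}(q^\epsilon)$ collapses to the product $G_\alpha(q^{\epsilon^{\text{out}}})\,\Psi_\beta(q^{\epsilon^{\text{in}}})$.

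The hard part is almost certainly the first step: producing an integral formula for $\Phi_{a,-p}(z)$ that is honestly polynomial in the $z_i$, specialises correctly, and retains all the features (wheel condition, correct degrees, block decoupling on $q^\epsilon$) needed for the rest of the argument. Once that foundation is laid, the two remaining ingredients---the factorisation of the $\C$-matrix enabled by $m_p(\pi)=0$, and the decoupling of the $w$-contours---are the kind of bookkeeping made routine by Proposition~\ref{prop:Bases} and Lemma~\ref{lem:sta} on the combinatorial side, and by standard residue manipulations on the analytic side.
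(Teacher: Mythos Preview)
Your overall architecture matches the paper's: define a multivariate $\Phi_{a,-p}(z)$, set $\Psi_{\pi,-p}=\sum_a \C_{\pi,a}\Phi_{a,-p}$, establish polynomiality, degree bounds and a wheel condition on the inner variables so that the inner part lands in $\mathcal{V}_{n-p}$, and then reduce to a block factorisation of the $C$-matrix. But there is a genuine gap in the middle step.

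You assume that in the sum $\Psi_{\pi,-p}=\sum_a \C_{\pi,a}\,\Phi_{a,-p}$ only those $a$ of the form $\alpha'\circ\beta'$ survive, so that the contour integral can be split into $\{w_1,\dots,w_p\}$ and $\{w_{p+1},\dots,w_n\}$ and $\C_{\pi,a}$ factored as $\C_{\alpha,\alpha'}\C_{\beta,\beta'}$. This is false: triangularity only gives $a\preceq\pi$, and $a\preceq\pi$ with $m_p(\pi)=0$ does \emph{not} force $m_p(a)=0$. For instance with $n=3$, $p=2$, $\pi=()()()$ one has $a=(()())\preceq\pi$ but $m_2(a)=1$. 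So there are contributing $a$ that do not split as $\alpha'\circ\beta'$, and for these neither your integral decoupling nor your $\C$-factorisation applies as stated. The paper deals with exactly this obstruction: it decomposes \emph{every} $a$ as $a=b\bullet c$ where $b,c$ need not be matchings, proves the inner factorisation $\Phi_{b\bullet c,-p}(z)=\Phi_{b,-p}(z^{(O)})\sum_\beta C_{c,\beta}\Psi_\beta(z^{(I)})$ by a small-arch recursion (not a one-shot decoupling of the $w$-contours), and then separately shows (Proposition~5.8) that $\Phi_{b,-p}(z^{(O)})=\sum_\alpha C_{b,\alpha}\,G_\alpha(z^{(O)})$ even when $b$ is not a matching, via an explicit reduction of repeated entries in $b$. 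Only after this does the block factorisation $C_{b\bullet c,\alpha\circ\beta}=C_{b,\alpha}C_{c,\beta}$ finish the job, and the result is obtained by inverting $C$ at the very end rather than working with $\C$ throughout.

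A second, smaller issue: your plan to pin down both sides by evaluating at product points $q^\epsilon$ presupposes a Lemma~\ref{lem:dual}-type duality for the \emph{outer} factor. The paper never establishes such a structure for the $z^{(O)}$-polynomials (there is no wheel-condition space identified on the outer side); it avoids the problem by proving the factorisation at the level of $\Phi_{a,-p}$ as an identity of polynomials and then using that $C$ is invertible. If you want to stick with your evaluation strategy, you would need to supply that missing outer duality.
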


In what follows we use the short notation $z^{(O)}$ for the outer variables $\{z_1,\ldots,z_p,\allowbreak z_{\hat{p}},\ldots,z_{\hat{1}}\}$ and $z^{(I)}$ for the inner variables $\{z_{p+1},\ldots,z_{\hat{p}-1}\}$.

\subsection{A contour integral formula}

We will follow the same path as in Section~\ref{sec:base_a}.
That is, we introduce a new quantity $\Phi_{a,-p} (z)$ defined by a multiple contour integral formula, and after we can obtain $\Psi_{\pi,-p} (z)$ by:
\begin{equation}
\Psi_{\pi,-p} (z) := \sum_a \C_{\pi,a}(\tau) \Phi_{a,-p} (z).
\end{equation}

This new quantity $\Phi_{a,-p} (z)$ must be a generalization of the formula $\Phi_a(z)$, let $\hat{\jmath}=2n-j+1$:
\begin{align*}
 \Phi_{a,-p}(z)&:=k_n \prod_{1\leq i,j \leq p} (q z_i -q^{-1} z_j) \prod_{2 \leq i,j \leq p} (q z_i - q^{-1}z_{\hat{\jmath}}) \\
  &\qquad \prod_{i=1}^p \prod_{j=p+1}^{2n-p} (q z_i -q^{-1} z_j) \prod_{p<i<j< \hat{p}} (q z_i -q^{-1} z_j) \\
&\qquad \oint \ldots \oint \prod_{i=1}^n \frac{dw_i}{2\pi i} \frac{\prod_{j>i} (w_j-w_i)(qw_i-q^{-1}w_j)}{\prod_{j\leq a_i} (w_i-z_j) \prod_{j>a_i} (qw_i-q^{-1}z_j)} \prod_{j=1}^p \frac{qw_i-q^{-1}z_{\hat{\jmath}}}{qz_j - q^{-1}w_i},
\end{align*}
where $k_n$ is a normalization constant:
\[
 k_n = 
  \begin{cases}
   (q-q^{-1})^{-n(n-1)}&\text{if }p=0;\\
   (q-q^{-1})^{-(p-1)^2 -(n-p)(n-p-1)}& \text{otherwise},
  \end{cases} 
\] 
and the contours of integration surround all $z_i$ but not $q^{\pm 2} z_i$. 
This means, that integrating is equivalent to choose all possible combinations of poles $(w_k - z_i)^{-1}$ for all $k\leq n$ such that $i\leq a_k$. 
Notice that the presence of the Vandermonde implies that we cannot chose the same pole twice.

In the homogeneous limit $z_i=1$ for all $i$, we get:
\[
 \Phi_{a,-p} (1,\ldots,1)=\oint \ldots \oint \prod_i \frac{du_i}{2\pi i u_i^{a_i}} (1+\tau u_i)^{-p} \prod_{j>i} (u_j-u_i)(1+\tau u_j +u_i u_j), 
\]
which is precisely $\phi_a (\tau,-p)$.
In fact, this is the main reason for the formula presented here.

Therefore, we can write 
\begin{equation}
 G_\pi (z)=\sum_a \C_{\pi,a}(\tau) \Phi_{a,-|\pi|} (z),
\end{equation}
where the sum runs over all matchings $a$.
In fact, we will use this equation as definition of $G_\pi (z)$.

\subsection{Some properties of $\Phi_{a,-p} (z)$}

In Section~\ref{sec:base_a}, we have seen that $\Phi_a (z)$ are useful because they are homogeneous polynomials with a certain degree which obey to the \emph{wheel condition}, thus they span $\mathcal{V}_n$ and we can expand $\Psi_\pi (z)$ as a linear combination of $\Phi_a (z)$.
We hope that we can find some properties of $\Phi_{a,-p} (z)$ which allow us to apply similar methods.
Let us start with the polynomiality:

\begin{prop}[Polynomiality]\label{prop:poly}
 The function $\Phi_{a,-p} (z_1,\ldots,z_{2n})$ is a homogeneous polynomial on the variables $z_i$.
\end{prop}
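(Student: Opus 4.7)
The plan is to expand the multiple contour integral as an iterated residue sum and show that every apparent pole in the resulting rational function is absorbed by an explicit factor of the prefactor. For each tuple $(\sigma_1,\ldots,\sigma_n)$ with $\sigma_i\le a_i$, one picks the residue of $w_i$ at $w_i=z_{\sigma_i}$. Because the Vandermonde $\prod_{i<j}(w_j-w_i)$ in the numerator of the integrand vanishes whenever two $w$'s coincide, only tuples with pairwise distinct $\sigma_i$ contribute, and $\Phi_{a,-p}(z)$ is rewritten as the prefactor times a finite sum of rational functions in the $z$'s.

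Next, I would classify the apparent denominators of each summand into three types, depending on which factor of the integrand they come from. Type (i): $(z_{\sigma_i}-z_j)$ with $j\le a_i$ and $j\ne\sigma_i$, coming from the leftover factors $(w_i-z_j)^{-1}$ not consumed by the residue. Type (ii): $(qz_{\sigma_i}-q^{-1}z_j)$ with $j>a_i$, coming from $(qw_i-q^{-1}z_j)^{-1}$. Type (iii): $(qz_j-q^{-1}z_{\sigma_i})$ with $j\le p$, coming from the new factors $(qz_j-q^{-1}w_i)^{-1}$ introduced in the definition.

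The type (i) poles are disposed of by the same antisymmetrization argument as in the original $p=0$ case of $\Phi_a(z)$: the integral is manifestly regular as $z_i\to z_j$ (the contour surrounds both points, and the integrand has only simple poles at each), so any $(z_{\sigma_i}-z_j)^{-1}$ appearing in a single summand cancels against the contribution of the configuration in which $\sigma_i$ and another valid choice are swapped, the cancellation being effected by the $(w_j-w_i)$ factors of the numerator. For types (ii) and (iii), I would do a case-by-case matching showing that the denominator divides the prefactor. The four explicit products in the prefactor—$\prod_{1\le i,j\le p}(qz_i-q^{-1}z_j)$, $\prod_{2\le i,j\le p}(qz_i-q^{-1}z_{\hat\jmath})$, $\prod_{i\le p<j\le 2n-p}(qz_i-q^{-1}z_j)$ and $\prod_{p<i<j<\hat p}(qz_i-q^{-1}z_j)$—between them cover exactly the pairs $(\sigma_i,j)$ and $(j,\sigma_i)$ permitted by the constraints $\sigma_i\le a_i$, $j>a_i$, and $j\le p$.

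The main obstacle is precisely this bookkeeping: showing that the somewhat idiosyncratic ranges in the prefactor (e.g.\ $2\le i,j\le p$ rather than $1\le i,j\le p$ in the second product, and $j\le 2n-p$ rather than $j\le 2n$ in the third) fit exactly the poles that arise, without leaving any stray denominator and without introducing spurious factors that would spoil homogeneity. Once this check is carried out, homogeneity in the $z_i$'s follows by counting degrees: the integrand is rational and homogeneous, each residue reduces total degree by $n$, and the degrees of the four prefactor products are chosen precisely to restore polynomiality. This verification is tedious but mechanical, and is presumably the kind of ``straightforward but a little bit tedious'' result that the author relegates to the appendix.
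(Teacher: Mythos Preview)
Your overall strategy matches the paper's: expand the contour integral as a residue sum over tuples $(\sigma_1,\ldots,\sigma_n)$ with distinct $\sigma_i\le a_i$, classify the apparent denominators into the same three types, kill type~(i) by the swap/antisymmetrization argument using the Vandermonde $(w_j-w_i)$, and handle types~(ii) and~(iii) case by case.

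There is, however, one oversimplification that would bite you when you actually carry out the bookkeeping. You assert that for types~(ii) and~(iii) ``the denominator divides the prefactor,'' and that the four prefactor products ``cover exactly'' the relevant pairs. The paper's appendix shows this is not the whole story: some of these poles are cancelled not by the prefactor but by numerator factors of the \emph{integrand} that survive the residue substitution. Concretely, a pole $(qz_{\sigma_i}-q^{-1}z_j)^{-1}$ with $j\ge\hat p$ is killed by the extra numerator factor $\prod_{l=1}^p(qw_i-q^{-1}z_{\hat l})$; and a pole $(qz_{\sigma_i}-q^{-1}z_j)^{-1}$ with $j\le p$ in the case where some other $w_l$ has been sent to $z_j$ is killed by $(qw_i-q^{-1}w_l)$, not by the prefactor. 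The prefactor's idiosyncratic ranges (the $2\le i,j\le p$ you flagged, and the missing $z_1$ terms) are consistent precisely because these alternative cancellations take over in the boundary cases. So your plan is right in spirit but the claim ``denominator divides prefactor'' needs to be weakened to ``denominator is cancelled either by a prefactor term or by a surviving numerator factor of the integrand,'' with the case split depending on which other residues have been chosen.
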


\begin{proof}
It is obvious that $\Phi_{a,-p} (z)$ can be written as a ratio of two polynomials.
Thus, to prove that this is a polynomial, it is enough to prove that there are no poles.
The proof is straightforward but tedious, thus we shall not repeat on the body of this paper, see the Appendix~\ref{sec:poly} for the details.

The fact that it is homogeneous is obvious from the definition, once we know that it is a polynomial.
\end{proof}

\begin{prop}[The individual degree]
 The degree of the polynomials $\Psi_{a,-p} (z)$ at a single variable $z_i$ is given by: 
\[
 \delta_i = 
\begin{cases}
  0&\text{if }i=1\text{ or }i=\hat{1};\\
  p-1&\text{if }1<i\leq p \text{ or }\hat{p}\leq i <\hat{1};\\
  n-p-1&\text{if }p<i<\hat{p}.
\end{cases}
\]
\end{prop}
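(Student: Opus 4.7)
The plan is to compute the $z_k$-degree directly from the integral formula by writing $\Phi_{a,-p}(z) = P(z) J(z)$, where $P(z)$ is the prefactor (the four explicit products of linear factors) and $J(z)$ is the $n$-fold contour integral. Polynomiality is already provided by Proposition~\ref{prop:poly}, so what remains is to count degrees carefully, and the key point is that the claimed $\delta_k$ depends only on the position of $k$ and not on $a$.

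First, I would compute the $z_k$-degree of the prefactor by counting, in each of the four products, how many linear factors actually contain $z_k$. Writing the prefactor as $P_1 P_2 P_3 P_4$ following the order in which the products appear, a direct count gives: $P_1$ contributes $2p-1$ for $1\leq k\leq p$ and $0$ elsewhere; $P_2$ contributes $p-1$ for $2\leq k\leq p$ and also $p-1$ for $\hat p\leq k\leq \hat 2$; $P_3$ contributes $2n-2p$ for $k\leq p$ and $p$ for $p<k<\hat p$; $P_4$ contributes $2n-2p-1$ for $p<k<\hat p$. Summing yields $\deg_{z_k}P = 2n-1$ for $k=1$, $\,2n+p-2$ for $2\leq k\leq p$, $\,2n-p-1$ for $p<k<\hat p$, $\,p-1$ for $\hat p\leq k<\hat 1$, and $0$ for $k=\hat 1$.

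Next, I would expand the integral $J(z)$ as a sum of residues $\sum_\sigma R_\sigma(z)$, where $\sigma$ runs over injective maps $\{1,\ldots,n\}\to\{1,\ldots,2n\}$ with $\sigma(i)\leq a_i$, and $R_\sigma(z)$ is obtained by substituting $w_i=z_{\sigma(i)}$ in the integrand. Splitting into the two cases $k\in\mathrm{image}(\sigma)$ or not, and carefully counting which numerator and denominator factors produce $z_k$ (the Vandermonde factors, the $(w_i-z_j)$ and $(qw_i-q^{-1}z_j)$ factors in the denominator, the $(qz_j-q^{-1}w_i)$ factors, and the numerator $(qw_i-q^{-1}z_{\hat\jmath})$ factors), both cases give the same asymptotic degree at $z_k\to\infty$: $-2n$ if $k\leq p$, $-n$ if $p<k<\hat p$, and $0$ if $k\geq\hat p$. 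In each range the partial counts corresponding to $\#\{i:a_i\geq k\}$ cancel out, which is why the dependence on $a$ drops out.

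Adding the prefactor and integral degrees, the claimed values $\delta_k$ match immediately for $p<k<\hat p$, for $\hat p\leq k<\hat 1$, and for $k=\hat 1$. For $1\leq k\leq p$ the naive sum is $-1$ (resp.\ $p-2$), which is forbidden by polynomiality; this forces the leading coefficient of $z_k^{-2n}$ in $\sum_\sigma R_\sigma(z)$ to vanish and raises the effective degree of $J$ by one, producing exactly $\delta_1=0$ and $\delta_k=p-1$ for $2\leq k\leq p$. To complete the proof one must check that the degree is actually attained, i.e.\ that no further cancellation occurs at order $z_k^{1-2n}$; this is done by exhibiting a specific surviving term in the leading expansion, or by specialising the $z_j$'s. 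The main obstacle is precisely this last step, pinning down the exact order of the cancellation at $k\leq p$ rather than just bounding it; as with the polynomiality argument, this is a mechanical but tedious calculation naturally deferred to the appendix.
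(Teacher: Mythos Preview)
Your approach---split each residue term according to whether $z_k$ is picked as a pole or not, and count the $z_k$-degree in each case---is exactly the paper's argument. The paper's proof is two sentences, so your more detailed count is welcome. However, your count in the range $k\leq p$ is off by one, and the patch you propose via ``cancellation'' is logically backwards.

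Concretely, for $k\leq p$ in the \emph{picked} case ($\sigma(m)=k$) one obtains degree $-2n+1$, not $-2n$. The extra $+1$ comes from the removed pole $(w_m-z_k)^{-1}$; the remaining contributions of $w_m\mapsto z_k$ total $-1$, so the net is $-2n+1$. With this correction, the prefactor degree $2n-1$ (for $k=1$) or $2n+p-2$ (for $2\leq k\leq p$) combines directly to give $0$ and $p-1$, and no cancellation argument is needed. (Symmetrically, for $\hat p\leq k<\hat 1$ the picked case gives $-1$ and the unpicked case gives $0$; the maximum $0$ again matches directly.)

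Your cancellation argument cannot work as written: if every $R_\sigma$ satisfies $\deg_{z_k}R_\sigma\leq -2n$, then $J=\sum_\sigma R_\sigma$ satisfies $\deg_{z_k}J\leq -2n$, and cancellation of the leading $z_k^{-2n}$ term would make the degree \emph{more} negative, not less. Polynomiality of $PJ$ together with $\deg_{z_k}P=2n-1$ would then force $\Phi_{a,-p}$ to vanish identically in $z_k$, a contradiction. So the discrepancy is not resolved by cancellation but by redoing the count.

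Finally, note that both the paper and your argument only establish an upper bound $\delta_k\leq\cdots$; neither checks that the bound is attained. This is harmless for the sequel: only the upper bound on the individual degree (together with the wheel condition and the total-degree bound) is used to place the inner part of $\Phi_{a,-p}$ in $\mathcal V_{n-p}$.
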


\begin{proof}
For a certain variable $z_i$ two things can happen when we perform the contour integration.
Either we chose a pole $(w_k-z_i)^{-1}$ for some $k=1,\ldots,n$ or not.
It is enough to compute the degree in both cases and we arrive to the desired result.
\end{proof}

\begin{prop}[The combined degree]
The degree of the polynomials $\Psi_{a,-p} (z)$ in the outer variables is $(p-1)^2$, in the inner variables is $(n-p)(n-p-1)$ and in all variables is $(p-1)^2 +(n-p)(n-p-1)$.
\end{prop}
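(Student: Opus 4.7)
The plan is to prove the three degree assertions by a direct count, exploiting the fact that (by the previous proposition) $\Phi_{a,-p}$ is a homogeneous polynomial, so its total degree is the unique exponent of $\lambda$ under the rescaling $z_i\mapsto\lambda z_i$ (compatibly with $w_k\mapsto\lambda w_k$, which leaves the contour structure intact). One then simply accounts, piece by piece, for the prefactor and the integrand.

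First I would compute the total degree of the prefactor. Each of the four products in the definition of $\Phi_{a,-p}(z)$ is a product of linear factors of $z$-degree one, so the prefactor degree equals the total number of such factors. Counting the cardinalities of the four index sets gives $p^2+(p-1)^2+2p(n-p)+(n-p)(2n-2p-1)$, which a short algebraic simplification rewrites as $n^2+(p-1)^2+(n-p)(n-p-1)$.

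Next I would track the integral's contribution. The Vandermonde-type numerator $\prod_{i<j}(w_j-w_i)(qw_i-q^{-1}w_j)$ has $(w,z)$-degree $n(n-1)$, the pole denominators $\prod_{i}\prod_{j=1}^{2n}(\cdots)$ contribute $-2n^2$, and the extra factors $\prod_{j=1}^{p}(qw_i-q^{-1}z_{\hat{\jmath}})/(qz_j-q^{-1}w_i)$ cancel in degree, so the integrand is homogeneous of degree $-n^2-n$ in $(w,z)$. Each of the $n$ differentials $dw_i$ raises the degree by one under the joint rescaling, so after all contour integrations the integral is homogeneous of $z$-degree $-n^2$. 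Adding the prefactor degree yields the claimed total degree $(p-1)^2+(n-p)(n-p-1)$.

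For the outer/inner breakdown I would refine the same bookkeeping by marking each linear factor as outer, inner, or mixed (the latter contributing one to each partial degree). In the prefactor, the first two products are purely outer, the fourth is purely inner, and the third mixed product contributes $2p(n-p)$ to each of the outer and inner degrees. A parallel analysis of the integrand, using the pole selection rule $j\le a_i$ together with the known individual degrees from the preceding proposition, partitions the $-n^2$ coming from the integral into outer and inner shares. The totals match exactly $(p-1)^2$ outer and $(n-p)(n-p-1)$ inner, and since their sum already equals the total degree, nothing is lost to cross-monomials. The main obstacle is the last point: one has to verify that the partial outer and inner degrees genuinely add up to the total, i.e.\ that the leading monomial can be chosen with maximal outer and maximal inner exponents simultaneously. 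This is essentially forced by homogeneity combined with the individual degree proposition, since the sum of the individual degrees over the outer (respectively inner) variables exceeds the claimed partial degree by exactly a factor of two, matching the bilinear structure of every linear factor in $\Phi_{a,-p}$.
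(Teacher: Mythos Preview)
Your total-degree computation via the joint rescaling $z_i\mapsto\lambda z_i$, $w_k\mapsto\lambda w_k$ is correct and is essentially what the paper means by ``easy to compute''.

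The gap is in the partial degrees. The scaling trick does not extend to the outer (or inner) variables alone: if you rescale only the outer $z_j$'s, there is no consistent rescaling of the $w_i$'s preserving the pole structure, because after taking residues each $w_i$ becomes some $z_{j_i}$ which may be outer or inner depending on the pole chosen. So the outer/inner degree cannot be read off from the integrand before integration. You recognise this yourself (``the main obstacle is the last point''), but the resolution you offer --- that the sum of the individual degrees over the outer variables equals $2(p-1)^2$ and that the ``bilinear structure'' then forces outer degree $(p-1)^2$ --- is not a valid argument. As a counterexample, $(x_1-y_1)(x_2-y_2)$ has individual degree $1$ in every variable and total degree $2$, yet its joint degree in $(x_1,x_2)$ is $2$, not $1$.

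The paper's route is the one you only gesture at with ``pole selection rule''. One first performs the contour integral by residues: each term corresponds to choosing poles $(w_i-z_{j_i})^{-1}$ with distinct $j_i\le a_i$. Parametrise such a term by $\alpha$, the number of $j_i$ that are inner (i.e.\ $p<j_i<\hat{p}$). A direct count then shows that the inner degree of that residue term is $\alpha\bigl(2(n-p)-\alpha\bigr)-(n-p)$, a quadratic in $\alpha$ maximised at $\alpha=n-p$ with value $(n-p)(n-p-1)$; the analogous count with $\alpha$ outer poles gives outer degree $\alpha(2p-\alpha)-2p+1$, maximised at $\alpha=p$ with value $(p-1)^2$. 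Since these two upper bounds sum to the total degree $\delta$ already computed, every monomial of top total degree must realise both bounds simultaneously, which is precisely what is needed for the decomposition $\Phi_{a,-p}(z)=\sum_i P_i(z^{(O)})Q_i(z^{(I)})$ used afterwards. This $\alpha$-parametrised residue count is the missing concrete step in your proposal.
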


\begin{proof}
The total degree in all variables is easy to compute: $\delta=(p-1)^2 +(n-p)(n-p-1)$.

The total degree in the inner variables (respectively outer variables) is more complex. 
Assume that we choose $\alpha$ inner poles $(w_i-z_j)^{-1}$ for $p<j<\hat{p}$ (respectively outer poles $(w_i-z_j)^{-1}$ for $j\leq p$ or $j\geq \hat{p}$).
The degree is $\alpha(2n-2p-\alpha)-n+p$ (respectively $\alpha (2p-\alpha)-2p+1$).

The maximum is when $\alpha=n-p$ (respectively $\alpha=p$), and it is equal to $(n-p)(n-p-1)$ (resp. $(p-1)^2$).
\end{proof}

Notice that if we choose the maximum in both sets (the outer and the inner variables) we obtain $(p-1)^2+(n-p)(n-p-1)$, which is equal to $\delta$. 
Thus we can write:
\begin{equation}
 \Phi_{a,-p}(z)=\sum_i P_i (z^{(O)}) Q_i (z^{(I)}),
\end{equation}
where $P_i$ and $Q_i$ are polynomials of total degree $(p-1)^2$ and $(n-p)(n-p-1)$ respectively. 

\begin{prop}[The \emph{wheel condition}]
Let $z_k=q^2 z_j=q^4 z_i$ for $p<i<j<k<\hat{p}$.
Thus, $\Phi_{a,-p}(z)=0$.
\end{prop}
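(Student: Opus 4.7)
The plan is to adapt the standard wheel-condition argument used for $\Phi_a(z)$ in~\cite{artic41,artic47}, exploiting the fact that the specialization $z_k = q^2 z_j = q^4 z_i$ involves only inner indices $p < i < j < k < \hat{p}$. First, I would isolate the parts of $\Phi_{a,-p}(z)$ that are sensitive to inner-variable specializations. Among the four product blocks in the prefactor, only $\prod_{p < r < s < \hat{p}}(qz_r - q^{-1}z_s)$ depends on inner-inner pairs, and at the wheel it vanishes to order two, via the factors $(qz_i - q^{-1}z_j)$ and $(qz_j - q^{-1}z_k)$; the remaining prefactor blocks each involve at least one outer variable and stay regular and nonzero at the wheel.

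Next, I would analyze the integral by expanding it as a sum over residue assignments $w_\ell \mapsto z_{m_\ell}$ with $m_\ell \leq a_\ell$. The Vandermonde-type numerator $\prod_{\ell < \ell'}(w_{\ell'} - w_\ell)(qw_\ell - q^{-1}w_{\ell'})$ vanishes whenever an assignment places two integration variables at a wheel pair $(z_i, z_j)$ or $(z_j, z_k)$, killing those contributions. The surviving assignments can still produce apparent singularities in the spectral parameters, since after residue picking a denominator factor $(qw_\ell - q^{-1}z_m)^{-1}$ with $m > a_\ell$ becomes $(qz_{m_\ell} - q^{-1}z_m)^{-1}$, which vanishes at the wheel if the pair $(m_\ell, m)$ equals $(i,j)$ or $(j,k)$. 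These singularities are exactly absorbed by the double zero of the prefactor, so the product is regular; to turn regularity into vanishing, I would mirror the additional step from the standard proof for $\Phi_a$, where the residue configurations left over after the Vandermonde cancellations assemble into an expression that vanishes identically at the wheel specialization.

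The crucial observation that makes this strategy work is that the extra factors distinguishing $\Phi_{a,-p}$ from $\Phi_a$---namely the integrand modification $\prod_{j=1}^{p}(qw_\ell - q^{-1}z_{\hat{\jmath}})/\prod_{j=1}^{p}(qz_j - q^{-1}w_\ell)$ and the outer-variable prefactor blocks---only involve outer spectral parameters $z_j$ with $j \leq p$ or $j \geq \hat{p}$. Since the wheel variables $z_i, z_j, z_k$ are strictly inner, these extra factors are regular at the wheel and introduce neither new zeros nor new poles; they act as inert spectators, so that the inner structure of $\Phi_{a,-p}$ matches that of $\Phi_a$ on the inner variables and the original argument transfers essentially verbatim.

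The main obstacle will be the careful bookkeeping of the apparent singularities of the integral at the wheel and of their exact cancellation against the prefactor zeros, in particular verifying that no unexpected contribution survives once the Vandermonde zeros and the outer-variable modifications are taken into account. As with the polynomiality statement of Proposition~\ref{prop:poly}, this case analysis is routine but tedious, and the detailed verification is naturally relegated to an appendix in the spirit of Appendix~\ref{sec:poly}.
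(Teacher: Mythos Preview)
Your approach is the paper's: the inner prefactor $\prod_{p<r<s<\hat p}(qz_r-q^{-1}z_s)$ contributes a double zero at the wheel via $(qz_i-q^{-1}z_j)(qz_j-q^{-1}z_k)$, and the only way the contour integral can generate two compensating poles is to pick residues $w_m=z_i$ with $i\le a_m<j$ and $w_l=z_j$ with $j\le a_l<k$; since $a$ is increasing this forces $m<l$, whence the numerator factor $(qw_m-q^{-1}w_l)$ vanishes and that term dies. Hence the integral carries at most a simple pole at the wheel, and the product with the prefactor vanishes outright. Your observation that the extra outer-variable factors in $\Phi_{a,-p}$ are inert here is exactly right and is what makes the transfer from $\Phi_a$ immediate.

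Where your write-up wobbles is the last stretch: there is no ``additional step'' needed to pass from regularity to vanishing, and invoking one suggests you have not quite closed the loop. Your sentence ``these singularities are exactly absorbed by the double zero of the prefactor, so the product is regular'' undercounts --- the double zero absorbs at most a \emph{single} pole and still leaves a zero to spare. Stated that way, the proof is four lines (as in the paper) and needs no appendix-level bookkeeping; the heavy residue case-analysis you sketch is what is needed for polynomiality (Proposition~\ref{prop:poly}), not here.
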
 

\begin{proof}
The term $\prod_{p<i<j<\hat{p}} (qz_i-q^{-1}z_j)$ contain two zeros (if $z_k=q^2 z_j = q^4 z_i$), if we want to prove the \emph{wheel condition} it is enough to prove that it is impossible to cancel both at the same time.
In order to cancel the zero $(qz_j-q^{-1}z_k)$, we need to chose a pole $(w_l-z_j)^{-1}$ for some $l$ such that $j\leq a_l <k$.
In the same way we must chose $(w_m - z_i)^{-1}$ for some $m$ such that $i\leq a_m <j$.
But this implies that $m<l$, so $(qw_m-q^{-1}w_l)$ will be zero, making the whole expression to vanish.
\end{proof}

Therefore the inner parts $Q_i (z_{p+1},\ldots,z_{\hat{p}-1})$ are homogeneous polynomials with total degree $(n-p)(n-p-1)$ and satisfy the \emph{wheel condition}, so they belong to the vector space $\mathcal{V}_{n-p}$, that is:
\begin{equation}
\Phi_{a,-p}(z)=\sum_{\beta:|\beta|=n-p} P_{a;\beta} (z^{(O)}) \Psi_\beta (z^{(I)}),
\end{equation}
where the sum runs over all matchings $\beta$ of size $n-p$.

\subsection{Computing $P_{a;\beta}$}
In order to compute these polynomials, we need a new definition: 

\begin{defi}
Let $a=\{a_1,\ldots,a_n\}$ be a matching of size $n$.
Separate it into two parts: the inner part composed by all $p<a_i<\hat{p}$ subtracted by $p$, and the outer part composed by all $a_i\leq p$, and the $a_i\geq\hat{p}$ subtracted by $2(n-p)$.
Let $c$ be the inner part and $b$ the outer part. 
Moreover if the number of elements of $c$ is bigger than $n-p$ by $s$ we add $s$ times $p$ to the outer part.
$b$ and $c$ are not necessarily matchings.

Write $a=b\bullet c$.
\end{defi}

If $m_p(\pi)=0$, this definition coincides with the one of $\pi=\alpha\circ\beta$.
For example, let $a=\{1,3,5,6,7,10\}$ and $p=4$. 
Then, we have $b=\{1,3,4,6\}$ and $c=\{1,2,3\}$.

With this new notation, the polynomials $P_{a;\beta} (z_1,\ldots,z_p,z_{\hat{p}},\ldots,z_{\hat{1}})$ are given by the following proposition:
\begin{prop}\label{prop:1step}
\[
 \Phi_{b\bullet c,-p} (z)=\Phi_{b,-p} (z^{(O)}) \sum_\beta C_{c,\beta}(\tau) \Psi_\beta (z^{(I)}).
\]
\end{prop}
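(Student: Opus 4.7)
The plan is to prove the factorization directly at the level of the defining contour integrals. Note first that the expansion recalled in Section~\ref{sec:base_a} gives $\sum_\beta C_{c,\beta}(\tau)\Psi_\beta(z^{(I)})=\Phi_c(z^{(I)})$, so the claim is equivalent to the analytic identity
\[
 \Phi_{b\bullet c,-p}(z)=\Phi_{b,-p}(z^{(O)})\,\Phi_c(z^{(I)}).
\]
This is the form I would aim to prove.

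The first step is to partition the $n$ integration variables $w_1,\dots,w_n$ according to the structure of $a=b\bullet c$: call $w_i$ an \emph{outer} variable if $a_i\le p$ or $a_i\ge\hat p$, and an \emph{inner} variable otherwise. When one expands the contour integral as a sum of residues at $w_i=z_j$ for $j\le a_i$, one obtains terms labeled by assignments of each $w_i$ to a $z$-variable. I would show that only ``pure'' assignments survive, i.e.\ outer $w_i$'s collect residues at outer $z_j$'s, and inner $w_i$'s at inner $z_j$'s. The argument is the same Vandermonde/wheel type argument used in the proof of the wheel condition for $\Phi_{a,-p}$: the factor $\prod_{j>i}(w_j-w_i)(qw_i-q^{-1}w_j)$ forbids any two variables from selecting the same $z$, and an assignment in which an inner $w_i$ picks an outer $z$ while a later outer $w_k$ picks an inner $z$ produces an incompatible ordering (the would-be factor $(qw_i-q^{-1}w_k)$ vanishes). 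The hypothesis $m_p=0$ (implicit in the statement) is what guarantees that the inner and outer parts of $a$ have the right sizes for the separation to be possible.

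Once the separation is established, the second step is to reassemble the prefactor. The prefactor of $\Phi_{b\bullet c,-p}(z)$ splits naturally into three groups: a piece in $z^{(O)}$ only (the $(q z_i-q^{-1}z_j)$ and $(qz_i-q^{-1}z_{\hat\jmath})$ with $i,j\le p$, and the $i\leq p$, $j\ge \hat p$ products); a piece in $z^{(I)}$ only (the Vandermonde $\prod_{p<i<j<\hat p}(qz_i-q^{-1}z_j)$); and mixed pieces $\prod_{i\le p,\,p<j<\hat p}(qz_i-q^{-1}z_j)$. The mixed pieces combine with the surviving $\prod_{j=1}^p(qw_i-q^{-1}z_{\hat\jmath})/(qz_j-q^{-1}w_i)$ factors attached to the inner $w_i$'s: when an inner $w_i$ is evaluated at $w_i=z_k$ with $p<k<\hat p$, these mixed rational pieces collapse into a product in $z^{(O)}$ alone, freeing the inner residue from any dependence on $z^{(O)}$. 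Symmetrically, the mixed pieces attached to outer $w_i$'s regroup to give exactly the prefactor of $\Phi_{b,-p}(z^{(O)})$. Matching the normalizations amounts to bookkeeping the powers of $(q-q^{-1})$ in the two $k_n$-constants, which is easily checked from their definitions.

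The main obstacle is precisely this reassembly of the prefactors and the cross terms, together with the sign and $q$-power tracking through the residues. The combinatorics of which $(qz_i-q^{-1}z_j)$ factors end up on which side, and in particular the asymmetric role played by $z_1$ and $z_{\hat 1}$ in the definition of $\Phi_{b,-p}$, has to be handled carefully; I would expect to isolate this as a short lemma and defer the calculation to the appendix, in the same spirit as Proposition~\ref{prop:poly}. Once the prefactor identity is in hand, the inner integrand in the remaining $n-p$ variables is exactly the integrand of $\Phi_c(z^{(I)})$ from Section~\ref{sec:base_a}, and the outer integrand in the remaining $p$ variables is exactly the integrand of $\Phi_{b,-p}(z^{(O)})$, which closes the proof.
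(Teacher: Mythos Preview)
Your reformulation $\Phi_{b\bullet c,-p}(z)=\Phi_{b,-p}(z^{(O)})\,\Phi_c(z^{(I)})$ is indeed equivalent to the proposition, but the proposed proof has a genuine gap that cannot be patched by a deferred computation.

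First, there is no hypothesis $m_p(\pi)=0$ in Proposition~\ref{prop:1step}. The decomposition $a=b\bullet c$ is defined for every matching $a$; in particular the padding rule may add $s$ copies of $p$ to $b$, so that $|b|=p$ while $|c|=n-p+s$ with $s\ge 0$. When $s>0$ the total number of integration variables on the right, $|b|+|c|=n+s$, exceeds the $n$ variables on the left, so a term-by-term factorisation of the contour integral is impossible in principle.

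Second, even when $s=0$, your claim that ``mixed'' residues vanish is incorrect. You invoke the factor $(qw_i-q^{-1}w_k)$, but at $w_i=z_j$ (outer) and $w_k=z_{j'}$ (inner) this equals $qz_j-q^{-1}z_{j'}$, which is generically nonzero; the wheel-type argument you cite only applies after specialising the $z$-variables to powers of $q$, not for generic $z$. Concretely, for $n=3$, $p=2$, $a=\{1,3,4\}$, the residue $w_1=z_1$, $w_2=z_2$, $w_3=z_3$ is a mixed term with a nonvanishing contribution.

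The paper's proof avoids these obstacles by not attempting to factor the integral. Instead it specialises the inner spectral parameters to $z^{(I)}=q^\beta$ and shows, via a recursion on small arches of $\beta$ (Lemmas~\ref{lema:0rec} and~\ref{lema:prec}, Corollary~\ref{cor:prec}), that
\[
 \Phi_{b\bullet c,-p}(z^{(O)},q^\beta)=\tau^{d(\beta)}\,C_{c,\beta}(\tau)\,\Phi_{b,-p}(z^{(O)}).
\]
Since the $\Psi_\beta$ form a basis of $\mathcal{V}_{n-p}$ determined by these evaluation points (Lemma~\ref{lem:dual}), this identifies the inner part with $\sum_\beta C_{c,\beta}(\tau)\Psi_\beta$. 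The recursion absorbs exactly the cross terms that defeat your residue-splitting, and it is indifferent to whether $b$ was padded.
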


\begin{proof}
Remember that the coefficients $C_{a,\pi}(\tau)$ are defined as $C_{a,\pi}(\tau)=\tau^{-d(\pi)}\Phi_a (q^\pi)$ and can be constructed using the algorithm based in a recursion formula proved in Lemma~\ref{lema:0rec}:
\[
 \Phi_{a} (q^\pi) = [s] \tau^{d(\pi)-d(\hat{\pi})} \Phi_{\hat{a}} (q^{\hat{\pi}}),
\]
where $\hat{\pi}$ is a matching obtained by removing a small arch $(j,j+1)$ from $\pi$, $\hat{a}$ is a matching obtained from $a$ by removing one element $a_i$ of the sequence such that $a_i=j$, decreasing all elements bigger then $j$ by two ($a_i\rightarrow a_i-2$ if $a_i>j$) and by one if the element is equal to $j$ ($a_i\rightarrow a_i-1$ if $a_i=j$), $s$ is the number of $a_i$ such that $a_i=j$.

Therefore we can study the polynomial $\Phi_{\at,-p} (z)$ at the special points $z=\{z_1,\ldots,z_p,q^\beta,z_{\hat{p}},\ldots,z_{\hat{1}}\}$, because this is enough to characterize the polynomial.
It is not difficult to see that we obtain exactly the same recursion formula, see Lemma~\ref{lema:prec} for the technical details.

Thus, it is not hard to prove that
\[
 \Phi_{b\bullet c,-p} (z_1,\ldots,z_p,q^\beta,z_{\hat{p}},\ldots,z_{\hat{1}}) =C_{c,\beta}(\tau) \tau^{d(\beta)} \Phi_{b,-p} (z_1,\ldots,z_p,z_{\hat{p}},\ldots,z_{\hat{1}}) ,
\]
which is the object of Corollary~\ref{cor:prec}.
This is equivalent to the result we wanted to prove.
\end{proof}

The remaining polynomial $\Phi_{b,-p}$ can be expressed by means of the polynomials $G_\alpha$: 
\begin{prop}\label{prop:2step}
\[
\Phi_{b,-p} (z^{(O)})=\sum_\alpha C_{b,\alpha}(\tau) G_\alpha (z^{(O)}).
\]
\end{prop}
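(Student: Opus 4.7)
The plan is to prove the identity as a direct matrix inversion, using the fact that $\C(\tau)$ is by construction the inverse of $C(\tau)$ on the space of matchings of a given size. The starting point is to apply the defining relation $G_\pi(z)=\sum_{a}\C_{\pi,a}(\tau)\,\Phi_{a,-|\pi|}(z)$ at ``size $n=p$''. This specialization makes sense in the $2p$ outer variables because, for $n=p$, the inner product $\prod_{p<i<j<\hat p}(qz_i-q^{-1}z_j)$ and the mixed product $\prod_{i=1}^{p}\prod_{j=p+1}^{2n-p}(qz_i-q^{-1}z_j)$ appearing in the definition of $\Phi_{a,-p}$ both become empty, and the contour integral reduces cleanly to a polynomial in the $2p$ variables $z^{(O)}$. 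Hence, for a matching $\alpha$ of size $p$,
\[
G_\alpha(z^{(O)}) \;=\; \sum_{a'}\C_{\alpha,a'}(\tau)\,\Phi_{a',-p}(z^{(O)}),
\]
the sum running over matchings $a'$ of size $p$.

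Substituting this into the right-hand side of the statement and exchanging the order of summation, I would obtain
\[
\sum_\alpha C_{b,\alpha}(\tau)\,G_\alpha(z^{(O)}) \;=\; \sum_{a'}\Bigl(\sum_\alpha C_{b,\alpha}(\tau)\,\C_{\alpha,a'}(\tau)\Bigr)\,\Phi_{a',-p}(z^{(O)}).
\]
By Proposition~\ref{prop:Bases} applied at size $p$, the matrix $C(\tau)$ on matchings of size $p$ is unitriangular with respect to $\preceq$ and admits $\C(\tau)$ as its inverse. The bracketed sum therefore equals $\delta_{b,a'}$ whenever $b$ is itself a matching, and the right-hand side collapses to $\Phi_{b,-p}(z^{(O)})$, which is the desired identity.

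The only mild subtlety is that the $b$ produced by the decomposition $a=b\bullet c$ of Proposition~\ref{prop:1step} need not be a matching: it may be a ``padded'' sequence. In that case, the same argument goes through by linearity: $C_{b,\alpha}(\tau)$ is, by its very definition, the coefficient of $\Psi_\alpha$ in the expansion of $\Phi_b$, so $\sum_\alpha C_{b,\alpha}(\tau)\,\C_{\alpha,a'}(\tau)$ equals the coefficient of $\Phi_{a'}$ in the expansion of $\Phi_b$ in the matching basis $\{\Phi_{a'}\}$. Since the contour integral defining $\Phi_{b,-p}$ depends on the sequence $b$ through exactly the same combinatorial data as $\Phi_b$ (the location of the split $j\leq a_i$ vs.\ $j>a_i$ in the denominator factors), this same linear combination transports to an expansion of $\Phi_{b,-p}$ in the $\Phi_{a',-p}$'s. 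Beyond this bookkeeping there is no real obstacle; the identity is a purely formal change-of-basis statement in $2p$ variables between the $\Phi_{a,-p}$-basis and the $G_\alpha$-basis, and the main technical input, namely the mutual invertibility of $C(\tau)$ and $\C(\tau)$ at size $p$, is already established in Section~\ref{sec:base_a}.
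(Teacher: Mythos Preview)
Your strategy matches the paper's: the case where $b$ is a matching is a direct inversion of the definition of $G_\alpha$, and for general $b$ you correctly identify that one should expand $\Phi_b$ in the matching basis $\{\Phi_{a'}\}$ with coefficients $R_{b,a'}(\tau)=\sum_\alpha C_{b,\alpha}(\tau)\,\C_{\alpha,a'}(\tau)$ and then transport this linear relation to the $\Phi_{\cdot,-p}$'s.

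The gap is in that last step. You write that the relation ``transports'' because $\Phi_{b,-p}$ ``depends on the sequence $b$ through exactly the same combinatorial data as $\Phi_b$.'' But the relation $\Phi_b=\sum_{a'}R_{b,a'}\,\Phi_{a'}$ is, a priori, only an identity in the vector space $\mathcal{V}_p$ (it holds after integration, being checked at the specialization points $q^\epsilon$); it is not automatic that such a relation lifts to the modified integrals $\Phi_{\cdot,-p}$, which do \emph{not} lie in $\mathcal{V}_p$. The paper closes this gap by producing the relation at the \emph{integrand} level: when $b_k=b_{k+1}=j$, the integrand of $\Phi_b$ is antisymmetric in $w_k,w_{k+1}$ apart from the factor $(qw_k-q^{-1}w_{k+1})$, and an explicit three-term identity then yields the recursion $\Phi_b=-\Phi_{\tilde b}-\tau\,\Phi_{\check b}$. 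One must then check --- and this is the actual content --- that the extra factors $\prod_j(qw_i-q^{-1}z_{\hat\jmath})/(qz_j-q^{-1}w_i)$ present in $\Phi_{b,-p}$ are symmetric in all the $w$'s, so the same antisymmetry and the same three-term recursion (with the same vanishing conditions) hold for $\Phi_{b,-p}$. It is this concrete verification that justifies the transport; the observation that both integrals ``depend on $b$ in the same way'' is the right intuition but not a proof.
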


\begin{proof}
If $b$ is a matching, this proposition is equivalent to the definition of $G_\alpha$.

Thus, the hard case is when $b$ is not a matching.
In that case $C_{b,\alpha}(\tau)$ is defined by the equation $\Phi_b(q^\alpha)$.
We know that $\mathcal{V}_p$ is spanned by $\Phi_f (z_1,\ldots,z_{2p})$ where $f$ is a matching of size $p$. 
So we can write $\Phi_b (z_1,\ldots,z_{2p}) = \sum_f R_{b,f}(\tau) \Phi_f (z_1,\ldots,z_{2p})$, where $R_{b,f}(\tau)$ is a matrix to be determined.
This is equivalent to 
\begin{equation}
 C_{b,\alpha}(\tau) = \sum_f R_{b,f}(\tau) C_{f,\alpha}(\tau).
\end{equation}
where $f$ and $\alpha$ are matchings, but not necessarily $b$.

We shall determine an algorithm to compute the matrix $R_{b,f}(\tau)$, but we shall only treat the case when $b_i\leq 2i-1$ for all $i$, but ignoring the condition $b_i \neq b_j$ if $i\neq j$.
Thus, the only ``anomaly'' which can occur is the existence of several $b_i$ with the same value, this is, there are some $p$ such that $\sharp\{b_i : b_i=p\}>1$.

Let $b$ be such that it has one element repeated at least twice, say $b_k=b_{k+1}=j$ and $b_{k-1}<j$, so apart from the term $(qw_k-q^{-1}w_{k+1})$ the integrand is antisymmetric on $w_k$ and $w_{k+1}$.
Using the fact that 
\begin{multline}
 \mathcal{A}\left\{ \frac{qw_k-q^{-1}w_{k+1}}{(w_k-z_j)(w_{k+1}-z_j)}+\frac{qw_k-q^{-1}w_{k+1}}{(qw_k-q^{-1}z_j)(qw_{k+1}-q^{-1}z_j)}\right.\\
+\left.\tau\frac{qw_k-q^{-1}w_{k+1}}{(qw_k-q^{-1}z_j)(w_{k+1}-z_j)}  \right\}=0
\end{multline}
we can write 
\[
 \Phi_b (z_1,\ldots,z_{2p})=-\Phi_{\tilde{b}} (z_1,\ldots,z_{2p})-\tau \Phi_{\check{b}} (z_1,\ldots,z_{2p})
\]
where $\check{b}$ is obtained from $b$ by $b_k\rightarrow b_k-1$, and $\tilde{b}$ is obtained from $b$ by $b_k\rightarrow b_k-1$ and $b_{k+1}\rightarrow b_{k+1}-1$.

If $\sharp\{b_i\text{ such that }b_i<j\}\geq j$ the integral vanishes.
Thus, we can follow this procedure until we get either a matching or it vanishes.

Now, if we look to the expression of $\Phi_{b,-p} (z_1,\ldots,z_{2p})$, we can try to apply the same procedure in order to get the same recursion.
Two things are essential, the vanishing conditions are the same, and if $b_k=b_{k+1}=j$ the integrand should be antisymmetric apart from the term $(qw_k-q^{-1}w_{k+1})$, which is true.

Having the same recursion, we can write:
\begin{align*}
 \Phi_{b,-p}(z_1,\ldots,z_{2p}) &= \sum_f R_{b,f}(\tau) \Phi_f (z_1,\ldots,z_{2p})\\
 &= \sum_f \sum_\alpha R_{b,f}(\tau) C_{f,\alpha}(\tau) G_\alpha (z_1,\ldots,z_{2p})\\
 &= \sum_\alpha C_{b,\alpha}(\tau) G_\alpha (z_1,\ldots,z_{2p})
\end{align*}
from the first to the second equations we apply the definition of $G_\alpha$.
\end{proof}

\subsection{Final details}

In conclusion we have that 
\begin{equation}
 \Phi_{b\bullet c,-p} (z) = \sum_\alpha \sum_\beta C_{b,\alpha}(\tau)C_{c,\beta}(\tau) G_\alpha (z^{(O)}) \Psi_\beta (z^{(I)}).
\end{equation}

A simple consequence of the algorithm that we use to compute the coefficients $C_{a,\pi}(\tau)$ is that it can be decomposed, as shown in Corollary~\ref{cor:0rec}: $C_{a,\alpha\circ\beta}(\tau)=C_{b,\alpha}(\tau)C_{c,\beta}(\tau)$, for $a=b\bullet c$.
Thus,
\[
 \Phi_{a,-p} (z) = \sum_\alpha \sum_\beta C_{a,\alpha\circ\beta}(\tau) G_\alpha (z^{(O)}) \Psi_\beta (z^{(I)}).
\]

When we compare with the formula
\[
 \Phi_{a,-p} (z) = \sum_\pi C_{a,\pi}(\tau) \Psi_{\pi,-p} (z),
\]
we conclude that
\begin{equation}
 \Psi_{\pi,-p} (z) = 
  \begin{cases}
   0 & \text{if }m_p(\pi)\neq 0\\
   G_\alpha (z^{(O)}) \Psi_\beta (z^{(I)})&\text{if } \pi=\alpha\circ\beta
  \end{cases}
\end{equation}
is a solution of the system of equations. 
And that solution must be unique because $C_{a,\pi}(\tau)$ is invertible.\qed

\section{Sum rule}\label{sec:sumr}

The main purpose of this section is to prove Theorem~\ref{thm:sum_G}:
\begin{align*}
 \sum_{\pi:|\pi|=n} (-1)^{d(\pi)} g_\pi &= A_n & \text{and} && \sum_{\pi:|\pi|=n} g_\pi &= (-1)^{\binom{n}{2}} \left( A_n^V \right)^2,
\end{align*}
the first one is a simple corollary of Theorem~6.16 at~\cite{tese}:
\[
 \sum_{\pi:|\pi|=n} g_\pi (-\tau) = \sum_{\pi:|\pi|=n} \psi_\pi (\tau),
\]
because it is known that $\sum_{\pi:|\pi|=n} \psi_\pi=A_n$, proved in~\cite{artic31}, and it is easy to check that $g_\pi (-1) = (-1)^{d(\pi)} g_\pi$.

\subsection{An integral formula}

The quantities $g_\pi (\tau)$ can be expressed by:
\[
 g_\pi (\tau) = \C_{\pi,a}(\tau) \phi_a (\tau,-|a|)
\]
following the definition at Section~\ref{sec:pol_qKZ}, where 
\[
 \phi_a (\tau,-|a|) = \oint \ldots \oint \prod_{i=1}^{|a|} \frac{du_i}{2\pi i u_i^{a_i}} (1+\tau u_i)^{-|a|} \prod_{j>i} (u_j-u_i) (1+\tau u_j +u_i u_j).
\]

Notice that if we change the sign of $\tau$ and at the same time the one from the variables $\{u_i\}_i$, the integral change like $	\phi_a (\tau,-|a|)=(-1)^{d(\tau)} \phi_a (\tau,-|a|)$, which in conjunction with Equation~\ref{eq:capi-tau}, we obtain: $g_\pi (-\tau) = (-1)^{d(\pi)}g_\pi (\tau)$. 

Let $\mathcal{L}_n$ be the set of matchings (in the form of sequences) of size $n$ defined by $a\in \mathcal{L}_n$ if and only if $a_i=2i-1$ or $a_i=2i-2$ for all $i$.

Following Section~3.3 of article~\cite{artic41} we can conclude that:
\begin{equation}
\sum_{\pi:|\pi|=n} g_\pi (\tau) = \sum_{a\in \mathcal{L}_n} \phi_a (\tau,-|a|).
\end{equation}
This results in:
\[
 \sum_{\pi:|\pi|=n} g_\pi (\tau) = \oint \ldots \oint \prod_i \frac{du_i}{2\pi i u_i^{2i-1}} (1+u_i) (1+\tau u_i)^{-n} \prod_{j>i} (u_j -u_i) (1+\tau u_j +u_i u_j),
\]
which can be compared with the formula:
\[
 \sum_{\pi:|\pi|=n}\psi_\pi (\tau) = \oint \ldots \oint \prod_i \frac{du_i}{2\pi i} (1+u_i) \prod_{j>i} (u_j-u_i)(1+\tau u_j +u_i u_j).
\]

In~\cite{tese} has been proved that:
\begin{multline*}
 \oint\ldots\oint \prod_i \frac{du_i}{2\pi i u_i^{2i-1}} (1+u_i) (1-\tau u_i)^{-n} \prod_{j>i} (u_j -u_i) (1-\tau u_j +u_i u_j)\\
  = \oint \ldots \oint \prod_i \frac{du_i}{2\pi i} (1+u_i) \prod_{j>i} (u_j-u_i)(1+\tau u_j +u_i u_j).
\end{multline*}
The idea of the proof, which we shall not repeat here, is that both sides count Totally Symmetric Self Complementary Plane Partitions (TSSCPP) with the same weights.
On the one hand, it was already known that the TSSCPP can be seen as lattice paths.
In this framework, we can count them using the Lindström--Gessel--Viennot formula, moreover, in this case we have a weight $\tau$ by vertical step.
This will be equivalent to the RHS.
On the other hand, we can describe the TSSCPP using a different set of lattice paths, which are called Dual Paths in~\cite{tese}.
This will give rise to the LHS.

\begin{cor}
\[
  \sum_{\pi:|\pi|=n} (-1)^{d(\pi)} g_\pi = A_n.
\]
\end{cor}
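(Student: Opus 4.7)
\medskip
\noindent\textbf{Proof proposal.} My plan is to combine three ingredients already made available in this section: the integral identity from~\cite{tese} displayed just above, the bivariate sum rule $\sum_{\pi} g_\pi(\tau) = \sum_{\pi}\psi_\pi(-\tau)$ of Theorem~\ref{thm:sum_Gtau}, and the classical sum rule $\sum_{\pi}\psi_\pi = A_n$ of~\cite{artic31}.

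First, I identify the left-hand side of the integral identity from~\cite{tese} with $\sum_\pi g_\pi(-\tau)$, using the integral formula derived above for $\sum_\pi g_\pi(\tau)$ and the substitution $\tau\mapsto -\tau$; the right-hand side is $\sum_\pi\psi_\pi(\tau)$ by the classical integral representation recalled just above. Equivalently, this is just Theorem~\ref{thm:sum_Gtau} applied after $\tau\mapsto -\tau$. Either way, I obtain
\[
\sum_{\pi:|\pi|=n} g_\pi(-\tau) = \sum_{\pi:|\pi|=n} \psi_\pi(\tau).
\]

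Next, I apply the parity relation $g_\pi(-\tau) = (-1)^{d(\pi)} g_\pi(\tau)$ noted earlier in this section, which itself follows from the substitution $u_i\mapsto -u_i$ in the integral formula for $\phi_a(\tau,-|a|)$ combined with Equation~\eqref{eq:capi-tau} for the matrix $\C_{\pi,a}$. The previous identity then rewrites as
\[
\sum_{\pi:|\pi|=n} (-1)^{d(\pi)} g_\pi(\tau) = \sum_{\pi:|\pi|=n}\psi_\pi(\tau),
\]
and specializing at $\tau=1$ yields the corollary via $\sum_\pi \psi_\pi = A_n$.

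There is no genuine obstacle once one accepts the integral identity of~\cite{tese}, whose TSSCPP-based proof is the substantive content and is not repeated here; the present corollary is purely a bookkeeping consequence. The only delicacy is the sign check in the second step, where the parity of the integrand under $u_i\mapsto -u_i$ and the parity of $\C_{\pi,a}$ under $\tau\mapsto -\tau$ must conspire to produce exactly $(-1)^{d(\pi)}$.
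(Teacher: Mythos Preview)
Your proof is correct and follows essentially the same route as the paper: the paper's proof simply invokes $\sum_{\pi}\psi_\pi = A_n$ together with $g_\pi(-1) = (-1)^{d(\pi)} g_\pi$, having already recorded the identity $\sum_\pi g_\pi(-\tau) = \sum_\pi \psi_\pi(\tau)$ from~\cite{tese}. You spell out the same three ingredients in slightly more detail, but there is no substantive difference.
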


\begin{proof}

This is a consequence of the fact that $\sum_{\pi:|\pi|=n} \psi_\pi = A_n$ and $g_\pi (-1) = (-1)^{d(\pi)} g_\pi$.

\end{proof}

\subsection{Alternating Sign Matrices}

In what follows, it will be convenient to see the Fully Packed Loop as Alternating Sign Matrices.
The bijection is well known, but we shall give here a sketch of it.

An Alternating Sign Matrices (ASM) of size $n$ is a matrix $n\times n$ containing entries $\pm 1$ and $0$, such that if we ignore the zeros, the $1$ and $-1$ alternate in each column and row, and every column and row sum up to $1$.

Take a Fully Packed Loop Configuration on a $n\times n$ square lattice, in each vertex write a number $0$ if it corresponds to a corner and $\pm 1$ otherwise, and choosing the signs of the non-zero entries in such a way that we obtain a ASM.
We claim that this defines a bijection.

For example, the configuration on Figure~\ref{fig:fplexample} becomes the following ASM:
\begin{center}
 \begin{tikzpicture}[scale=0.4]
   \node[ASM] at (0,0) {0};
   \node[ASM] at (1,0) {0};
   \node[ASM] at (2,0) {0};
   \node[ASM] at (3,0) {0};
   \node[ASM] at (4,0) {1};
   \node[ASM] at (5,0) {0};
   \node[ASM] at (0,1) {0};
   \node[ASM] at (1,1) {0};
   \node[ASM] at (2,1) {0};
   \node[ASM] at (3,1) {1};
   \node[ASM] at (4,1) {0};
   \node[ASM] at (5,1) {0};
   \node[ASM] at (0,2) {0};
   \node[ASM] at (1,2) {1};
   \node[ASM] at (2,2) {0};
   \node[ASM] at (3,2) {0};
   \node[ASM] at (4,2) {-1};
   \node[ASM] at (5,2) {1};
   \node[ASM] at (0,3) {0};
   \node[ASM] at (1,3) {0};
   \node[ASM] at (2,3) {0};
   \node[ASM] at (3,3) {0};
   \node[ASM] at (4,3) {1};
   \node[ASM] at (5,3) {0};
   \node[ASM] at (0,4) {1};
   \node[ASM] at (1,4) {0};
   \node[ASM] at (2,4) {0};
   \node[ASM] at (3,4) {0};
   \node[ASM] at (4,4) {0};
   \node[ASM] at (5,4) {0};
   \node[ASM] at (0,5) {0};
   \node[ASM] at (1,5) {0};
   \node[ASM] at (2,5) {1};
   \node[ASM] at (3,5) {0};
   \node[ASM] at (4,5) {0};
   \node[ASM] at (5,5) {0};
 \end{tikzpicture} 
\end{center}

Obviously, the number of ASM of size $n$ is the famous $A_n$.
Using this transformation, we see that the vertically symmetric FPL configurations are in bijection with vertically symmetric ASM.

There is only an $1$ at the first row.
Let $A_{n,i}$ count the number of Alternating Sign Matrices with the $1$ of the first row at the $i$th column, it was proved by Zeilberger in~\cite{Zeil-ASM-ref} that:
\[
 A_{n,i} = \binom{n+i-2}{i-1} \frac{(2n-i-1)!}{(n-i)!} \prod_{j=0}^{n-2} \frac{(3j+1)!}{(n+j)!}.
\]

We know also, see for example~\cite{artic45}, that
\[
 A_n (x):= \sum_{i=1}^{n} A_{n,i} x^{i-1} = \oint \ldots \oint \prod_{i=1}^n \frac{du_i}{2\pi i u_i^{2i-1}} (1+xu_i) \prod_{j>i} (u_j-u_i) (1+u_j+u_i u_j).
\]
In fact, this was conjectured in Zinn-Justin and Di Francesco's article~\cite{artic41}, in the same article the authors reformulate this conjecture in a different equation which was proved by Zeilberger in~\cite{Zeil-qKZ}.

Thus, it is straightforward to see that $\sum_\pi g_\pi = \sum_\pi \psi_\pi (-1) = (-1)^{\binom{n}{2}} A_n (-1)$.

\subsection{The $-1$ enumeration of ASM}\label{sec:-enum}

Next, we prove that the $-1$ enumeration of Alternating Sign Matrices $A_n (-1)$ is exactly the number of Vertically Symmetric Alternating Sign Matrices $A^V_n$ squared.
This result is already present in Di Francesco~\cite[Equation~2.8]{DF-qKZ-TSSCPP} and a detailed proof can be found in Williams' article~\cite{Nathan}.
For the sake of completeness we shall prove it in detail.

\begin{prop}
 We want to prove that $A_n(-1) = (A^V_n)^2$, \ie 
 \begin{multline*}
 \sum_i (-1)^{i-1} \binom{n+i-2}{i-1} \frac{(2n-i-1)!}{(n-i)!} \prod_{j=0}^{n-2} \frac{(3j+1)!}{(n+j)!}\\
= \begin{cases}
0 & \text{if }n\text{ is even};\\
\left(\frac{1}{2^m} \prod_{i=1}^m \frac{(6i-2)!(2i-1)!}{(4i-1)!(4i-2)!}\right)^2 & \text{if }n=2m+1.
 \end{cases}
\end{multline*}
\end{prop}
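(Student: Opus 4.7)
The plan is a direct factorial computation. First, factor the $i$-independent constant $C_n:=\prod_{j=0}^{n-2}\frac{(3j+1)!}{(n+j)!}$ out of the left-hand side and reduce the identity to evaluating the alternating sum
\[
S_n\ :=\ \sum_{i=1}^n (-1)^{i-1}\binom{n+i-2}{i-1}\frac{(2n-i-1)!}{(n-i)!}.
\]
Each summand equals $\frac{(n+i-2)!(2n-i-1)!}{(i-1)!(n-i)!(n-1)!}$, which is manifestly invariant under $i\leftrightarrow n+1-i$; this is the classical left--right symmetry $A_{n,i}=A_{n,n+1-i}$. Pairing the two contributions in $S_n$ produces signs $(-1)^{i-1}$ and $(-1)^{n-i}$: these cancel when $n$ is even, giving $S_n=0$, which matches $(A_n^V)^2=0$.

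For $n=2m+1$ the pairing is non-trivial, since the two signs now agree. To put $S_n$ in closed form I would pass to Pochhammer notation via $\binom{n+k-1}{k}=(n)_k/k!$ and $\binom{2n-k-2}{n-1}=\tfrac{(2n-2)!}{(n-1)!^2}\tfrac{(1-n)_k}{(2-2n)_k}$ (with $k=i-1$). The sum becomes a terminating hypergeometric series ${}_2F_1(n,1-n;2-2n;-1)$, which is evaluable by Pfaff's transformation followed by Bailey's ${}_2F_1(\cdot;1/2)$ summation, or equivalently, after inserting a trivial factor, by the Pfaff--Saalschütz identity applied to the resulting ${}_3F_2$. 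The outcome is $S_n$ as a single explicit ratio of factorials.

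The final and longest step is to substitute this expression into $C_nS_n$ and match the result term by term with $\bigl(\tfrac{1}{2^m}\prod_{k=1}^m\tfrac{(6k-2)!(2k-1)!}{(4k-1)!(4k-2)!}\bigr)^2$. This is where the bulk of the difficulty lies: one must group the factorials on both sides by the common index $k$, which requires repeated application of the Legendre duplication formula to split quantities such as $(6k-2)!$, $(4k-1)!$ and $(4k-2)!$ into even and odd pieces. The matching is tight but entirely mechanical; it is essentially the same bookkeeping carried out in Williams' article~\cite{Nathan} (already cited), which one could alternatively just invoke directly.
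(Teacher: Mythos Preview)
Your outline is sound, but the route differs from the paper's and one step is more delicate than you indicate.

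\medskip
\textbf{Comparison with the paper.} The paper does not split into cases at all. It writes $\frac{(2n-i-1)!}{(n-i)!}=(n-1)!\binom{2n-i-1}{n-i}$, introduces a formal variable $x$, and recognises the two binomial factors as Taylor coefficients of $(1+x)^{-n}$ and $(1-x)^{-n}$. Their product is $(1-x^2)^{-n}$, and extracting the coefficient of $x^{n-1}$ gives $0$ for even $n$ and $\binom{3m}{m}$ for $n=2m+1$, all in one stroke. No hypergeometric machinery, no limits. The final ``simple manipulation'' matching $\binom{3m}{m}(2m)!\,C_n$ to $(A_{2m+1}^V)^2$ is the same bookkeeping you describe. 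Your symmetry argument for even $n$ is correct and arguably more transparent than what the paper does there; your odd-$n$ argument is heavier.

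\medskip
\textbf{A point that needs care.} For $n=2m+1$ you propose to evaluate ${}_2F_1(n,1-n;2-2n;-1)$ via Pfaff and then Bailey's ${}_2F_1(a,1-a;c;1/2)$ summation. Formally this lands on
\[
\frac{\Gamma(1-n)\,\Gamma\!\bigl(\tfrac{3-2n}{2}\bigr)}{\Gamma\!\bigl(\tfrac{2-n}{2}\bigr)\,\Gamma\!\bigl(\tfrac{3-3n}{2}\bigr)},
\]
where both $\Gamma(1-n)$ and $\Gamma\!\bigl(\tfrac{3-3n}{2}\bigr)$ are singular at odd integers $n$. The ratio is indeed finite, but you must pass to a limit (or invoke the appropriate terminating form of Kummer's theorem, since $c=1+b-a$ here) to extract the value correctly; treating the Gamma identity naively is a common source of errors. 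You should either carry this limit out explicitly, or---simpler---replace the whole manoeuvre by the paper's coefficient-extraction trick, which yields $S_{2m+1}=(2m)!\binom{3m}{m}$ directly without any singular Gamma factors.
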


\begin{proof}
We can rewrite the expression:
\begin{align*}
 A_n (-1)&= \left.(-x)^{i-1}\binom{n+i-2}{i-1} \frac{(2n-i-1)!}{(n-i)!} x^{n-i} \prod_{j=0}^{n-2} \frac{(3j+1)!}{(n+j)!}\right|_{x=1}\\
  &=\left.\sum_{i=1}^n\sum_{k=0}^{n-1} (-x)^{i-1}\binom{n+i-2}{i-1} \frac{(n+k-1)!}{k!} x^{k} \prod_{j=0}^{n-2} \frac{(3j+1)!}{(n+j)!}\right|_{x^{n-1}},
\end{align*}
where the subscript $x^{n-1}$ means that we select the coefficient of $x^{n-1}$.
Changing $i-1 \rightarrow i$ and changing the limits, because they do not interfere in our computation:
\[
 A_n (-1)= \left.\sum_{i=0}^\infty\sum_{k=0}^\infty (-x)^i\binom{n+i-1}{i} \binom{n+k-1}{k} x^{k} (n-1)!\prod_{j=0}^{n-2} \frac{(3j+1)!}{(n+j)!}\right|_{x^{n-1}}.
\]

But 
\[
 \frac{1}{(1+x)^n} = \sum_{i=0}^\infty (-x)^i \binom{n+i-1}{i}.
\]
Applying this, we get:
\begin{align*}
  A_n (-1) &= \left.\frac{1}{(1+x)^n}\frac{1}{(1-x)^n} (n-1)!\prod_{j=0}^{n-2} \frac{(3j+1)!}{(n+j)!}\right|_{x^{n-1}}\\
  &=\left.\frac{1}{(1-x^2)^n} (n-1)!\prod_{j=0}^{n-2} \frac{(3j+1)!}{(n+j)!}\right|_{x^{n-1}}\\
  &=\left.\sum_i x^{2i} \binom{n+i-1}{i} (n-1)!\prod_{j=0}^{n-2} \frac{(3j+1)!}{(n+j)!}\right|_{x^{n-1}}.
\end{align*}
And this is zero if $n-1$ is odd. Set $n=2m+1$ with $m$ integer, thus:
\[
 A_{2m+1}(-1) = \binom{3m}{m} (2m)! \prod_{j=0}^{2m-1} \frac{(3j+1)!}{(2m+j+1)!}.
\]

A simple manipulation shows that this is equal to $\left(A_{2m+1}^V\right)^2$.
\end{proof}

\section{Further Questions}

\subsection{Solving the conjectures}

This paper is, in a certain way, the continuation of the article~\cite{negative}. 
Although, we solve here two conjectures, there still two more: the fact that that all coefficients of $A_\pi (t)$ are positive (in fact, we have some numerical evidence that all roots of $A_\pi (t)$ have negative real part) and the multiplicity of the real roots.

Also, the value of $g_{()^{2m+1}}$ it is still a conjecture.
In~\cite{tese} the author presents an interpretation of this value as counting a certain subset $\mathcal{R}$ of the Totally Symmetric Plane Partitions.
In fact it is not hard to prove that this subset $\mathcal{R}$ is exactly the subset $\mathcal{P}_n^R$ which appears in Ishikawa's article~\cite{Masao-tsscpp1}.
Therefore, this conjecture it is equivalent to the unweighted version of \cite[Conjecture~4.2]{Masao-tsscpp1}.

\subsection{Combinatorial reciprocity}

We recover here one of the ideas of~\cite{negative}.
The Theorems proved here, together with the conjectures of~\cite{negative} which remain unproved suggest that $g_\pi$ and $A_\pi (-t)$ for $t\in\mathbb{N}$ have a combinatorial interpretation.
That is, we believe that exist yet-to-be-discovered combinatorial objects indexed by the matchings $\pi$ counted by $|g_\pi|$ or by $|A_\pi (-p)|$.

Notice that, even if the sum rule of $A_\pi$ and $g_\pi$ are related ($\sum_\pi (-1)^{d(\pi) g_\pi = \sum_\pi A_\pi}$), both quantities are essentially different, it is, they have different symmetries.
On the one hand, the $A_\pi$ are stable in respect of the rotation and mirror symmetries. 
On the other hand, the $g_\pi$ are stable by inclusion, this is $g_{(\pi)}=g_\pi$.

The well-known \emph{Ehrhart polynomial} $i_P(t)$ of a lattice polytopes $P$, which counts the number of lattice points in $tP$ when $t$ is a positive integer, has an interesting property.
When $t$ is negative, $(-1)^{\dim P} i_P(t)$ counts the lattice points strictly inside of $-tP$ (see~\cite{BeckRobins} for instance).
We believe that should be something similar in the quantities $A_\pi (t)$.

\subsection{A new vector space of polynomials}

The polynomial $\Psi_\pi (z_1,\ldots,z_{2n})$ can be seen as the solution of the quantum Knizhnik--Zamolodchikov equation, moreover they define a vector subspace of polynomials characterized by a vanish condition (the \emph{wheel condition}) and a overall degree.
These polynomials, are also related to the non-symmetric Macdonald polynomials and can be constructed using some difference operators as showed by Lascoux, de Gier and Sorrell in~\cite{Lascoux-KL-M}.

In the same way, $G_\pi (z_1,\ldots,z_{2n})$ span a vector subspace of polynomials with a certain fixed degree.
Therefore, it will be interesting to fully characterize this vector subspace, and see if there is some other way to construct them, as the difference operator used in~\cite{Lascoux-KL-M}.

\section*{Acknowledgments}
The author is thankful to Ferenc Balogh for all the interesting discussions about this subjects and others.
The author would like also to thank Philippe Nadeau, with whom he published the article that inspiredu this one.

\appendix
\section{Proof of some technical lemmas}

\subsection{Polynomiality}\label{sec:poly}

In this section we prove that the quantities $\Phi_{a,-p} (z)$ are polynomials. 
Using Cauchy's integral formula, the integral is not so complicated, this is, it is enough to chose poles $(w_k-z_j)^{-1}$ for all $k$ such that $j\leq a_k$ (and no repeated $j$), and compute the residues on these poles.
Then we sum over all possible choices.
It is then clear that the result is a sum of ration of polynomials, and we want to prove that this sum has no poles.

Looking to the integral formula we can identify three sources of problems:
\begin{enumerate}
\item The term $\prod_i \prod_{j\leq a_i} (w_i-z_j)^{-1}$, which can originate poles like $(z_k - z_j)^{-1}$ with $k\neq j$ and $j,k\leq a_i$ for some $i$;
\item The term $\prod_i \prod_{j>a_i} (q w_i -q^{-1} z_j)^{-1}$, which can originate poles like $(q z_k -q^{-1} z_j)^{-1}$ with $k\leq a_i <j$ for some $i$;
\item The term $\prod_i \prod_{j=1}^p (q z_j -q^{-1} w_i)^{-1}$, which can originate poles like $(qz_j - q^{-1} z_k)^{-1}$ with $j\leq p$ and $k\leq a_i$ for some $i$. 
\end{enumerate}

The proof follows always the same path, we isolate a certain pole, and then we find that its coefficient vanishes.

\subsubsection{Poles like $(z_k-z_j)^{-1}$}

In order to obtain this pole we need to chose the residue at $(w_i-z_j)^{-1}$ with $k\leq a_i$ and/or the residue at $(w_l-z_k)^{-1}$ with $j\leq a_l$.

Let us assume that we chose both of them.
In the sum, there is an other term coming from the opposite choice $(w_l-z_j)^{-1}$ and $(w_i-z_k)^{-1}$ because $j,k\leq a_i,a_l$. 
Notice that there is a term $(w_i-w_l)$ which is $(z_j-z_k)$ in the first case and $(z_k-z_j)$, so we can write the sum of these two terms:
\[
 \frac{f(z_j,z_k,w_i=z_j,w_l=z_k)(z_j-z_k)+f(z_j,z_k,w_i=z_k,w_l=z_j)(z_k-z_j)}{(z_j-z_k)(z_k-z_j)}
\]
where $f$ is some analytical function on the point $z_k=z_j$.
So the pole disappears.

In the case that we chose only one of the poles, the analyse is the same. 
We chose $(w_i-z_j)^{-1}$ and it is easy to verify that the pole will cancel with the one coming from choosing the pole $(w_i-z_k)^{-1}$.

\subsubsection{Poles like $(qz_j-q^{-1}z_k)$}
When $k>\hat{p}$, the term is compensated by the zero of $\prod_{j}^p (qw_i-q^{-1}z_{\hat{j}})$.

The poles from $\prod_{i}\prod_{j=1}^p( qz_j-q^{-1}w_i)^{-1}$ will be cancelled by the term $\prod_{1\leq i,j \leq p} (qz_i-q^{-1}z_j) \prod_{2\leq i,j\leq p} (qz_i - q^{-1}z_{\hat{j}}) \prod_i^{p} \prod_{p<j<\hat{p}} (qz_i-q^{-1}z_j)$ except for some terms containing $z_1$ which can be ruled out by picking the pole $(w_1-z_1)^{-1}$.

Finally, the only poles that can appear are coming from the term $(qw_i-q^{-1}z_j)^{-1}$ with $j<\hat{p}$.
If $j>p$, the pole correspond to $w_i=z_k$ with $k\leq a_i<j$, so $k<j$ and this is ruled out by the term $\prod_{p<k<j<\hat{p}} (qz_k-q^{-1}z_j)$.
If $j\leq p$, such term doesn't exist. If we do not pick any pole $(w_l-z_j)^{-1}$ we can use the term $\prod_{1\leq i,j\leq p}(qz_i-q^{-1}z_j)$, if we pick $(w_l-z_j)^{-1}$ so we have that $k\leq a_i<j\leq a_l$, so $k<l$ and we have the term $(qw_i-q^{-1}w_l)=(qz_k-q^{-1}z_j)$. \qed

\subsection{An anti-symmetrization formula}
We anti-symmetrize the expression\linebreak $\prod_{i<j}^s (qw_i-q^{-1}w_j)$.
This will be useful for some Lemmas~\ref{lema:0rec} and~\ref{lema:prec}.
The result is not complicated and can be obtained by several ways.
 
Define $\mathcal{A}$ as being the anti-symmetrization of a function: $\mathcal{A} \left(f(w_1,\ldots,w_k) \right) =\frac{1}{k!} \sum_{\sigma\in S_k} \text{sign}(\sigma) f(w_{\sigma_1},\ldots,w_{\sigma_k})$.

\begin{lemma}\label{lema:anti} 
\[
 \mathcal{A}\left(\prod_{i<j}^k (qw_i-q^{-1}w_j)\right)=\frac{[k]!}{k!} \prod_{i<j}^k (w_i-w_j),
\] 
where $[k]!=[k][k-1]\ldots[1]$, and $[k]:=\frac{q^k-q^{-k}}{q-q^{-1}}=q^{k-1}+q^{k-3}+\ldots+q^{-k+1}$.
\end{lemma}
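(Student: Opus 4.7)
The key observation is that $\mathcal{A}\!\left(\prod_{i<j}(qw_i - q^{-1}w_j)\right)$ is an antisymmetric polynomial in $w_1,\ldots,w_k$ of total degree $\binom{k}{2}$, which is the minimum degree of a nonzero antisymmetric polynomial in $k$ variables. The space of such polynomials is one-dimensional, spanned by the Vandermonde $V = \prod_{i<j}(w_i - w_j)$, so we must have
\[
\mathcal{A}\!\left(\prod_{i<j}(qw_i - q^{-1}w_j)\right) = C \cdot V
\]
for some scalar $C = C(q,k)$, and the entire proof reduces to computing $C$.

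To determine $C$, I would compare the coefficient of the staircase monomial $w_1^{k-1}w_2^{k-2}\cdots w_{k-1}\, w_k^{0}$ on both sides. On the right, this coefficient is $1$, coming only from the identity term in the Leibniz expansion $V = \sum_\sigma \text{sign}(\sigma)\prod_i w_i^{k-\sigma(i)}$; hence the target value is $C = [k]!/k!$. On the left, one must analyze how this monomial arises from the antisymmetrization. For fixed $\sigma \in S_k$, expanding $\prod_{i<j}(qw_{\sigma(i)} - q^{-1}w_{\sigma(j)})$ and selecting a monomial corresponds to orienting each pair $\{\sigma(i),\sigma(j)\}$; the staircase degree sequence forces the unique transitive tournament compatible with the labelling imposed by $\sigma$. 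Tracking the factors $q$ (for each ``forward'' edge) and $-q^{-1}$ (for each ``backward'' edge), and combining with $\text{sign}(\sigma) = (-1)^{i(\sigma)}$, the contribution of $\sigma$ collapses cleanly to $q^{\binom{k}{2}-2\,i(\sigma)}$, where $i(\sigma)$ is the number of inversions of $\sigma$.

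The remaining step is to evaluate $\frac{1}{k!}\sum_{\sigma\in S_k} q^{\binom{k}{2}-2\,i(\sigma)}$ and identify it with $[k]!/k!$. This is a direct application of the classical Mahonian identity $\sum_{\sigma\in S_k} t^{\,i(\sigma)} = \prod_{n=1}^{k}[n]_t$, where $[n]_t := 1+t+\cdots+t^{n-1}$, applied at $t = q^{-2}$, together with the elementary rewriting $[n] = q^{n-1}[n]_{q^{-2}}$ which yields $[k]! = q^{\binom{k}{2}}\prod_{n=1}^{k}[n]_{q^{-2}}$. I do not anticipate a genuine obstacle here: the only point requiring care is the combinatorial identification of a unique transitive tournament for each $\sigma$, which is immediate once one observes that the sorted staircase $(0,1,\ldots,k-1)$ is the score sequence of precisely the transitive tournaments on $k$ vertices.
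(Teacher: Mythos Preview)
Your proof is correct, but it takes a different route from the paper's own proof. Both proofs begin with the same observation---that the antisymmetrization must be a scalar multiple of the Vandermonde $\prod_{i<j}(w_i-w_j)$---and then diverge in how they identify the constant.

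The paper proceeds by \emph{induction}: it specializes $w_k=0$ on both sides, which factors out $\prod_{i<k}w_i$ and reduces the left-hand side to a sum over the position $r$ of $k$ under $\sigma$. Each such term produces a factor $(-1)^{k-r}q^{2r-k-1}$ times the antisymmetrization in $k-1$ variables, and summing over $r$ gives the recursion $c_k=\frac{[k]}{k}\,c_{k-1}$, with $c_1=1$. You instead compute $C$ \emph{directly} by comparing staircase-monomial coefficients: the unique transitive tournament forces the contribution of each $\sigma$ to be $q^{\binom{k}{2}-2\,i(\sigma)}$, and the Mahonian generating function $\sum_{\sigma}t^{\,i(\sigma)}=\prod_{n}[n]_t$ at $t=q^{-2}$ finishes the job. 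Your approach is slicker and packages the answer in one shot, at the cost of invoking the Mahonian identity (whose standard proof, incidentally, is the same kind of ``where does $k$ go'' recursion the paper uses). The paper's approach is more self-contained but slightly more laborious. Both are perfectly sound.
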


\begin{proof}
It is obvious that the result is a homogeneous polynomial of degree $\binom{k}{2}$, and as it is antisymmetric on the $\{w_i\}_{1\leq i\leq k}$ it must be a multiple of $\Delta_k := \prod_{i<j}^k (w_i-w_j)$.
The only difficulty is to find the coefficient.

Let $q_{i,j}:=(qw_i-q^{-1}w_j)$.
On one hand we have:
\begin{align*}
 \left.\mathcal{A}\left( \prod_{i<j}^k (qw_i-q^{-1}w_j)\right)\right|_{w_k=0} &= c_k \left.\Delta_k\right|_{w_k=0}\\
 &=c_k \left(\prod_{i=1}^{k-1} w_i \right) \Delta_{k-1}.
\end{align*}

On the other hand, we have:
\begin{align*}
 \left.\mathcal{A}\left( \prod_{i<j}^k q_{i,j}\right)\right|_{w_k=0} &= \frac{1}{k!}\left.\sum_{\sigma\in S_k} \text{sign}(\sigma) \prod_{i<j} q_{\sigma_i,\sigma_j}\right|_{w_k=0} \\
 &=\frac{1}{k!}\left.\sum_{r=1}^k \sum_{\substack{\sigma\in S_k \\ \sigma_r=k}} \text{sign}(\sigma) \prod_{i<r}q_{\sigma_i,k}\prod_{j>r}q_{k,\sigma_j}\prod_{\substack{i<j\\i,j\neq r}}q_{\sigma_i,\sigma_j}\right|_{w_k=0} \\
 &= \frac{1}{k!} \sum_{r=1}^k \sum_{\substack{\sigma\in S_k\\\sigma_r=k}}\text{sign}(\sigma)  (-1)^{k-r} q^{2r-k-1}\prod_{i=1}^{k-1} w_i \prod_{\substack{i<j\\i,j\neq r}} q_{\sigma_i,\sigma_j}\\
 &= \left(\sum_r^k q^{2r-k-1}\right)\frac{1}{k!} \left(\prod_{i=1}^{k-1} w_i \right) \sum_{\epsilon \in S_{k-1}} \text{sign}(\epsilon) \prod_{i<j}^{k-1}q_{\epsilon_i,\epsilon_j}\\
 &= \frac{[k]}{k} c_{k-1} \Delta_{k-1}.
\end{align*}

So we have $c_k=\frac{[k]}{k} c_{k-1}$.
We easily check that $c_1=1$, the result follows.
\end{proof}

\subsection{The recursion formulæ}\label{sec:rec}

In this section we prove the following lemma:

\begin{lemma}\label{lema:0rec}
 Let $\bi$ be a matching of size $n$ let $p$ be a integer such that $0<p<n$, and such that $m_p (\bi)=0$, this means that we can write $\bi=\bt$ with $\bj$ and $\bk$ two matchings of size, respectively, $p$ and $r:=n-p$.
 Let $\ai$ be a second matching, represented by a sequence. 
 Write $\ai=\at$, such that $\aj$ and $\ak$ are the outer and inner part, respectively, and are not necessarily matchings.

 Clearly $\bk$ has a small arch, say $(\zj,\zj+1)$.
 Let $\hat{\bk}$ be the matching obtained from $\bk$ by removing the small arch $(\zj,\zj+1)$.
 Let $s$ count the $\ak_i$ such that $\ak_i=\zj$. 
 Let $\hat{\ak}$ be the sequence obtained from $\ak$ by keeping all $\ak_i<\zj$, the $s$ $\ak_i=\zj$ are transformed in $(s-1)$ $\zj-1$, and all the others $\ak_i>\zj$ are transformed $\ak_i \rightarrow \ak_i-2$.
  
 We claim that 
 \[
  \Phi_\at (q^{\bt}) = [s] \tau^{d(\bt)-d(\bj\circ\hat{\bk})} \Phi_{\aj\bullet\hat{\ak}}(q^{\bj\circ\hat{\bk}}).
 \]
\end{lemma}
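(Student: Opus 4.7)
The strategy is to substitute $z = q^\beta$ directly into the multivariate contour integral defining $\Phi_a(z)$ and perform a residue computation focused on the pair of positions $(\zj,\zj+1)$ corresponding to the small arch being removed. Since position $\zj$ is an opening and $\zj+1$ a closing in the parenthesis word of $\bt$, the substitution gives $z_\zj = q^{-1}$ and $z_{\zj+1} = q$, so $z_{\zj+1} = q^2 z_\zj$. This quadratic relation is the engine that drives the whole reduction.

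First I would classify the $n$ integration variables $w_1,\ldots,w_n$ according to the relation of $a_k$ to $\zj$. Those with $a_k < \zj$ produce no pole at $z_\zj$; those with $a_k > \zj$ produce a factor $(w_k-z_\zj)^{-1}(qw_k-q^{-1}z_{\zj+1})^{-1}=q^{-1}(w_k-z_\zj)^{-2}$ (double pole), which combined with the prefactor zero coming from the ``open-close pair'' $(qz_\zj-q^{-1}z_{\zj+1})=0$ must be handled carefully; and those with $a_k = \zj$ (there are exactly $s$ of them, by the definition of $s$) have a single factor $(w_k-z_\zj)^{-1}$ forcing a simple residue. The first reduction step is to argue that, up to cancellations between the vanishing prefactor and the higher-order poles, only the $s$ residues at $w_k=z_\zj$ (for $a_k=\zj$) contribute nontrivially to the leading behavior, and the remaining $w_k$ with $a_k>\zj$ reorganize into integrations whose pole $z_{\zj+1}$ is pushed to $z_\zj$ by the identity $qw_k-q^{-1}z_{\zj+1}=q(w_k-z_\zj)$.

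Second, the $s$ variables $w_{k_1},\ldots,w_{k_s}$ with $a_{k_\ell}=\zj$ play symmetric roles apart from the antisymmetric factor $\prod_{\ell<m}(qw_{k_\ell}-q^{-1}w_{k_m})$ appearing in the Vandermonde-like numerator. Antisymmetrizing this factor by Lemma~\ref{lema:anti} produces $\frac{[s]!}{s!}\prod_{\ell<m}(w_{k_\ell}-w_{k_m})$. Extracting the residues at $w_{k_\ell}=z_\zj$ for each $\ell$ and using the resulting ordinary Vandermonde to absorb the $s!$ from symmetrization collapses all but one of the $w_{k_\ell}$; exactly $s-1$ of these variables get reinterpreted as additional variables pinned at the shifted pole $z_\zj-1$, matching the prescription for $\hat{\ak}$ (the $s$ entries equal to $\zj$ become $s-1$ entries equal to $\zj-1$). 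The single surviving $[s]$ factor is the one announced in the lemma. All other $a_i$ indices with $a_i>\zj$ are shifted down by $2$ because positions $\zj$ and $\zj+1$ have been integrated out, which matches the second part of the rule for $\hat{\ak}$.

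Third, I would verify that after these simplifications, the remaining integrand and prefactor are precisely the integral representation of $\Phi_{\aj\bullet\hat{\ak}}\bigl(q^{\bj\circ\hat{\bk}}\bigr)$. The truncation of the prefactor $\prod_{i<j}(qz_i-q^{-1}z_j)$ when positions $\zj$ and $\zj+1$ are dropped produces a known ratio, and together with the normalization constant $k_n$ (which differs between size $n$ and size $n-1$ by the appropriate power of $q-q^{-1}$) one obtains the correct prefactor for the smaller $\Phi$. The $\tau$-power emerges from this bookkeeping: the lost prefactor contains contributions from all close-open pairs that straddle the small arch, and each such contribution evaluates at $z=q^\beta$ to $q^2-q^{-2}=-(q-q^{-1})\tau$; the number of such pairs is readily seen (from the Young-diagram interpretation of $d$) to equal $d(\bt)-d(\bj\circ\hat{\bk})$.

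The main obstacle will be the last step: rigorously tracking the power of $\tau$ and the absorbed powers of $(q-q^{-1})$. This requires a precise enumeration of which prefactor terms $(qz_i-q^{-1}z_\zj)$ and $(qz_i-q^{-1}z_{\zj+1})$ get removed, which are cancelled against residue contributions, and which combine with the $k_n$ renormalization. The cleanest way to organize this is to do the count for the base case $s=1$ first, where the anti-symmetrization is trivial and the entire argument reduces to a single residue, and then to use induction on $s$ (or simply iterate the one-variable residue extraction) to reach the general statement.
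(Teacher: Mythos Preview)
Your overall strategy matches the paper's: exploit $z_{\zj+1}=q^2z_\zj$ to force a single zero in the prefactor $\prod_{i<j}(qz_i-q^{-1}z_j)$, cancel it against a pole produced by a residue $w_\wj=z_\zj$ with $a_\wj=\zj$, antisymmetrize among the $s$ such variables via Lemma~\ref{lema:anti} to extract the factor $[s]$, and then track the resulting $\tau$-power by counting closings to the left and openings to the right of the small arch.

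However, your case analysis in the first step is mislabeled and would derail the computation if followed literally. For $a_k>\zj$ the denominator contributes $(w_k-z_\zj)^{-1}(w_k-z_{\zj+1})^{-1}$, \emph{not} $(w_k-z_\zj)^{-1}(qw_k-q^{-1}z_{\zj+1})^{-1}$; with $z_\zj=q^{-1}$, $z_{\zj+1}=q$ this is no double pole. The coalescence you describe (and the identity $qw_k-q^{-1}z_{\zj+1}=q(w_k-z_\zj)$) belongs to the case $a_k=\zj$, not $a_k>\zj$. This matters because the mechanism that cancels the single prefactor zero $(qz_\zj-q^{-1}z_{\zj+1})$ is precisely: take the residue of \emph{one} of the $s$ variables $w_\wj$ with $a_\wj=\zj$ at $w_\wj=z_\zj$, so that its factor $(qw_\wj-q^{-1}z_{\zj+1})^{-1}$ becomes $(qz_\zj-q^{-1}z_{\zj+1})^{-1}$. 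The remaining $s-1$ such variables are \emph{not} ``pinned'' anywhere: they stay as genuine contour integrals, and after deleting positions $\zj,\zj+1$ their pole structure is exactly that of sequence value $\zj-1$ in the reduced system---this is what produces the rule for $\hat{\ak}$. Your phrases ``extracting the residues at $w_{k_\ell}=z_\zj$ for each $\ell$'' and ``pinned at the shifted pole $z_{\zj-1}$'' describe the wrong bookkeeping (taking all $s$ residues would kill everything via the Vandermonde). Once this is straightened out, the remainder of your plan---identification with the smaller $\Phi$ and the $\tau$-count equal to $d(\bt)-d(\bj\circ\hat{\bk})$---goes through as in the paper; no separate induction on $s$ is needed.
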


The proof of this lemma is rather long but straightforward. 
The proof can be found in the literature, we shall reproduce it here because the understanding of the proof is important to another result.

\begin{proof}
 Notice that  when $z_{\zj+1}=q^2 z_\zj$ the pre-factor vanishes, thus we need to compensate it by choosing a pole $(w_\wj-z_\zj)^{-1}$ which will make appear the pole $(qz_\zj-q^{-1}z_{\zj+1})^{-1}$, so we must have $\zj\leq \ai_\wj < \zj+1$, which means $\ai_\wj = \zj$.
 Assume that there are $s\geq 1$ such $\ai_\wj$.

 The idea of the proof, is to pick all $s$ different poles with this property, and we will have a lot of cancellations, turning possible an identification with a smaller integral $\Phi_{\aj\bullet\hat{\ak}} (q^{\bj\circ\hat{\bk}})$.

 Because our formula is rather big, we will use the following shortcuts:
\begin{itemize}
  \item We divide the $z$ variables into two regions, and give different indices depending on the region  $\zi <\zj$, $\zk>\zj+1$;

  \item For the variables in the integral, we divide in four regions 
 (recall that we have $s$ $\ai_\wj = \zj$, and let $(w_\wj-z_\zj)^{-1}$ be the chosen pole), $\wh$ is such that $\ai_\wh <\zj$, $\wi<\wj$ such that $\ai_\wi=\zj$, $\wk>\wj$ such that $\ai_\wk=\zj$ and $\wl$ is such that $\ai_\wl > \zj$;

  \item We will use the notation $\q{\zi}{\zk}:=\prod_{\zi,\zk}(qz_\zi-q^{-1}z_\zk)$. 
  And equivalently, for any variable.
  For example $\q{\zj,\wl}=\prod_{\wl} (qz_\zj-q^{-1}w_\wl)$.
  Notice that we follow the rule Latin letters for the variables $z$ and Greek letters for the variables $w$;

  \item We use also the notation $\um{\zi}{\wk}$, meaning that we replace $q$ by $1$;

  \item The symbol $'$ means smaller, for example $\um{\zi}{\zi'}=\prod_{\zi>\zi'} (z_\zi-z_{\zi'})$;

  \item In order to keep simplicity, we omit every term which depends on none of the following variables $\{z_\zj,z_{\zj+1},w_\wi,w_\wj,w_\wk\}$, we omit even all integration symbols except the one of $w_\wj$;

  \item Define $n_\zi$ as the number of $z_\zi$, and equivalently for all other variables;

  \item Finally $\xi=(q-q^{-1})$. 
\end{itemize}

Let us rewrite our equation with this notation:
\begin{align*}
 \Phi_\at (q^\bt) &= \sum_{\wj}^s \xi^{-n(n-1)} \q{\zi}{\zj} \q{\zi}{\zj+1} \q{\zj}{\zj+1} \q{\zj}{\zk} \q{\zj+1}{\zk} \oint \frac{dw_\wj}{2\pi i} \frac{\um{\wi}{\wh}\um{\wj}{\wh}\um{\wk}{\wh}\um{\wi}{\wi'}\um{\wj}{\wi}\um{\wk}{\wi}\um{\wl}{\wi}}{\q{\wh}{\zj}\q{\wh}{\zj+1}\um{\wi}{\zi}\um{\wi}{\zj}\q{\wi}{\zj+1}\q{\wi}{\zk}}\\
 &\quad \times \frac{\um{\wk}{\wj}\um{\wl}{\wj}\um{\wk}{\wk'}\um{\wl}{\wk}\q{\wh}{\wi}\q{\wh}{\wj}\q{\wh}{\wk}\q{\wi'}{\wi}\q{\wi}{\wj}\q{\wi}{\wk}\q{\wi}{\wl}\q{\wj}{\wk}\q{\wj}{\wl}\q{\wk'}{\wk}\q{\wk}{\wl}}{\um{\wj}{\zi}\um{\wj}{\zj}\q{\wj}{\zj+1}\q{\wj}{\zk}\um{\wk}{\zi}\um{\wk}{\zj}\q{\wk}{\zj+1}\q{\wk}{\zk}\um{\wl}{\zj}\um{\wl}{\zj+1}}.
\end{align*} 

We use Lemma~\ref{lema:anti}:
\begin{align*}
 \Phi_\at (q^\bt) &= \frac{[s]!}{s!} \sum_{\wj}^s \xi^{-n(n-1)} \q{\zi}{\zj} \q{\zi}{\zj+1} \q{\zj}{\zj+1} \q{\zj}{\zk} \q{\zj+1}{\zk} \oint \frac{dw_\wj}{2\pi i} \frac{\um{\wi}{\wh}\um{\wj}{\wh}\um{\wk}{\wh}\um{\wi}{\wi'}\um{\wj}{\wi}\um{\wk}{\wi}\um{\wl}{\wi}}{\q{\wh}{\zj}\q{\wh}{\zj+1}\um{\wi}{\zi}\um{\wi}{\zj}\q{\wi}{\zj+1}\q{\wi}{\zk}}\\
 &\quad \times \frac{\um{\wk}{\wj}\um{\wl}{\wj}\um{\wk}{\wk'}\um{\wl}{\wk}\q{\wh}{\wi}\q{\wh}{\wj}\q{\wh}{\wk}\um{\wi'}{\wi}\um{\wi}{\wj}\um{\wi}{\wk}\q{\wi}{\wl}\um{\wj}{\wk}\q{\wj}{\wl}\um{\wk'}{\wk}\q{\wk}{\wl}}{\um{\wj}{\zi}\um{\wj}{\zj}\q{\wj}{\zj+1}\q{\wj}{\zk}\um{\wk}{\zi}\um{\wk}{\zj}\q{\wk}{\zj+1}\q{\wk}{\zk}\um{\wl}{\zj}\um{\wl}{\zj+1}}.
\end{align*} 

And finally we integrate around $w_\wj = z_\zj$:
\begin{align*}
 \Phi_\at (q^\bt) &= \frac{[s]!}{s!} \sum_{\wj}^s \xi^{-n(n-1)} \q{\zi}{\zj} \q{\zi}{\zj+1} \q{\zj}{\zj+1} \q{\zj}{\zk} \q{\zj+1}{\zk} \frac{\um{\wi}{\wh}\um{\zj}{\wh}\um{\wk}{\wh}\um{\wi}{\wi'}\um{\zj}{\wi}\um{\wk}{\wi}\um{\wl}{\wi}}{\q{\wh}{\zj}\q{\wh}{\zj+1}\um{\wi}{\zi}\um{\wi}{\zj}\q{\wi}{\zj+1}\q{\wi}{\zk}}\\
 &\quad \times \frac{\um{\wk}{\zj}\um{\wl}{\zj}\um{\wk}{\wk'}\um{\wl}{\wk}\q{\wh}{\wi}\q{\wh}{\zj}\q{\wh}{\wk}\um{\wi'}{\wi}\um{\wi}{\zj}\um{\wi}{\wk}\q{\wi}{\wl}\um{\zj}{\wk}\q{\zj}{\wl}\um{\wk'}{\wk}\q{\wk}{\wl}}{\um{\zj}{\zi}\q{\zj}{\zj+1}\q{\zj}{\zk}\um{\wk}{\zi}\um{\wk}{\zj}\q{\wk}{\zj+1}\q{\wk}{\zk}\um{\wl}{\zj}\um{\wl}{\zj+1}}\\
  &= \frac{[s]!}{s!} \sum_{\wj}^s \xi^{-n(n-1)} \q{\zi}{\zj} \q{\zi}{\zj+1} \q{\zj+1}{\zk} \frac{\um{\wi}{\wh}\um{\zj}{\wh}\um{\wk}{\wh}\um{\wi}{\wi'}\um{\zj}{\wi}\um{\wk}{\wi}\um{\wl}{\wi}}{\q{\wh}{\zj+1}\um{\wi}{\zi}\q{\wi}{\zj+1}\q{\wi}{\zk}}\\
 &\quad \times \frac{\um{\wk}{\wk'}\um{\wl}{\wk}\q{\wh}{\wi}\q{\wh}{\wk}\um{\wi'}{\wi}\um{\wi}{\wk}\q{\wi}{\wl}\um{\zj}{\wk}\q{\zj}{\wl}\um{\wk'}{\wk}\q{\wk}{\wl}}{\um{\zj}{\zi}\um{\wk}{\zi}\q{\wk}{\zj+1}\q{\wk}{\zk}\um{\wl}{\zj+1}}.
\end{align*} 

Using the fact that $z_\zj=q^{-1}$ and $z_{\zj+1}=q$ the following equalities are straightforward:
\begin{align*}
  \frac{\q{\zi}{\zj+1}}{\um{\zj}{\zi}}&=(-q)^{n_\zi}; &
  \frac{\um{\zj}{\wi}}{\q{\wi}{\zj+1}}&=(-q)^{-n_\wi}; &
  \frac{\um{\zj}{\wh}}{\q{\wh}{\zj+1}}&=(-q)^{-n_\wh};\\
  \frac{\um{\zj}{\wk}}{\q{\wk}{\zj+1}}&=(-q)^{-n_\wk};&
  \frac{\q{\zj}{\wl}}{\um{\wl}{\zj+1}}&=(-q)^{-n_\wl}.
\end{align*}
We obtain,
\begin{align*}
 \Phi_\at (q^\bt) &=(-q)^{n_\zi-n_\wh-n_\wi-n_\wk-n_\wl} \frac{[s]!}{s!} \sum_{\wj}^s \xi^{-n(n-1)} \q{\zi}{\zj} \q{\zj+1}{\zk} \frac{\um{\wi}{\wh}\um{\wk}{\wh}\um{\wi}{\wi'}\um{\wk}{\wi}\um{\wl}{\wi}}{\um{\wi}{\zi}\q{\wi}{\zk}\um{\wk}{\zi}\q{\wk}{\zk}}\\
 &\quad \times \um{\wk}{\wk'}\um{\wl}{\wk}\q{\wh}{\wi}\q{\wh}{\wk}\um{\wi'}{\wi}\um{\wi}{\wk}\q{\wi}{\wl}\um{\wk'}{\wk}\q{\wk}{\wl}\\
 &=(-q)^{n_\zi-(n-1)} \frac{[s]}{s} \sum_{\wj}^s \xi^{-n(n-1)} \q{\zi}{\zj} \q{\zj+1}{\zk} \frac{\um{\wi}{\wh}\um{\wk}{\wh}\um{\wi}{\wi'}\um{\wk}{\wi}\um{\wl}{\wi}\um{\wk}{\wk'}\um{\wl}{\wk}}{\um{\wi}{\zi}\q{\wi}{\zk}\um{\wk}{\zi}\q{\wk}{\zk}}\\
 &\quad \times \q{\wh}{\wi}\q{\wh}{\wk}\q{\wi'}{\wi}\q{\wi}{\wk}\q{\wi}{\wl}\q{\wk'}{\wk}\q{\wk}{\wl}.
\end{align*} 
This is exactly the $\Phi_{\aj\bullet\hat{\ak}} (q^{\bj\circ\hat{\bk}})$:
\begin{align*}
 \Phi_\at (q^\bt) &=(-q)^{n_\zi-(n-1)} [s] \xi^{-2(n-1)} \q{\zi}{\zj} \q{\zj+1}{\zk} \Phi_{\aj\bullet\hat{\ak}} (q^{\bj\circ\hat{\bk}})\\
 &=(-q)^{-(n-1)} [s] \xi^{-2(n-1)} \prod_{\zi}(q^{-1}-q^2 z_\zi) \prod_\zk (q^2-q^{-1}z_\zk) \Phi_{\aj\bullet\hat{\ak}} (q^{\bj\circ\hat{\bk}}).
\end{align*}
Finally, if we replace each $z_\zi = q^{\pm 1}$ and $z_\zj = q^{\pm 1}$, we get that:
\begin{align*}
 (q^{-1}-q^2 z_\zi) &= 
  \begin{cases}
   -\xi & \text{if } z_\zi=q^{-1}\\
   q \tau \xi & \text{if } z_\zi=q
  \end{cases} ;&
 (q^2-q^{-1} z_\zk) &= 
  \begin{cases}
   - \tau \xi & \text{if } z_\zk=q^{-1}\\
   q \xi & \text{if } z_\zk=q
  \end{cases} 
\end{align*}

Is a simple exercise to check that all factors $q$, $(-1)$ and $\xi$ cancel.
We can also check that the number of $z_\zi=q$ plus the number of $z_\zk=q^{-1}$ is exactly $d(\bt)-d(\bj\circ\hat{\bk})$, proving the lemma. 

\end{proof}

We have seen that the $\Psi_\bi (z)$ form a basis of the vector space $\mathcal{V}_n$. 
$\Phi_\ai(z_1,\ldots,z_{2n})$ lives at the vector space $\mathcal{V}_n$ too, so that we can write as $\Phi_\ai (z) = \sum_\bi C_{\ai,\bi}(\tau) \Psi_\bi (z)$.

And this coefficients can be identified using the relation $\Phi_{\ai} (q^{\bi}) = C_{\ai,\bi}(\tau) \tau^{d(\bi)}$. 
Thus, this lemma is telling us how we construct these $C_{\ai,\bi}(\tau)$: we pick a small arch at $\bi$, we apply the recursion and we get smaller matchings $\hat{\ai}$ and $\hat{\bi}$, and the whole thing is multiplied by $[s] \tau^{d(\bi)-d(\hat{\bi})}$.
Thus, the coefficient $C_{\ai,\bi}(\tau)$ it will be the multiplication by $[s]$ at each step, because the power on $\tau$ it will be exactly $\tau^{d(\ai)}$. 

Following this procedure, we can see what happen in the case that $\bi=\bt$:
\begin{cor}\label{cor:0rec}
 The coefficients $C_{\at,\bt}(\tau)$ obey to the simple decomposition 
\[
 C_{\at,\bt}(\tau) = C_{\aj,\bj}(\tau) C_{\ak,\bk}(\tau).
\]
\end{cor}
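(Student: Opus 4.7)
My plan is to prove the corollary by induction on the size of the inner matching $|\bk| = n-p$, with the recursion engine being Lemma~\ref{lema:0rec}. The base case $|\bk| = 0$ is immediate: then $\ak$ is empty, so $\at = \aj$ and $\bt = \bj$, whence both sides equal $C_{\aj,\bj}(\tau)$ (with the convention $C_{\emptyset,\emptyset}(\tau) = 1$). For the inductive step, assume $|\bk| \geq 1$. Since $\bk$ is a nonempty matching it contains a small arch, which — thanks to $m_p(\bi)=0$ — sits strictly inside the inner region and is therefore also a small arch of the full matching $\bi = \bt$. I will peel this arch in two parallel applications of Lemma~\ref{lema:0rec}: once to reduce $\Phi_{\at}(q^{\bt})$ and once to reduce $\Phi_{\ak}(q^{\bk})$, using the \emph{same} small arch in each case.

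Doing so produces
\[
C_{\at,\bt}(\tau) = [s]\, C_{\widehat{\at},\,\bj\circ\hat\bk}(\tau) \qquad\text{and}\qquad C_{\ak,\bk}(\tau) = [s]\, C_{\hat\ak,\hat\bk}(\tau),
\]
where $\hat\bk$ denotes $\bk$ with the chosen small arch removed and $\hat\ak$ the corresponding reduction of $\ak$ as prescribed by the lemma. The multiplicities $[s]$ in the two equations are identical, since the count $s$ only sees elements of $\at$ equal to the inner index $j$, and those are precisely the elements of $\ak$ equal to $j$. The crux is to verify that $\widehat{\at} = \aj \bullet \hat\ak$; once this holds, the inductive hypothesis applied to the strictly smaller pair $(\beta\circ\hat\bk,\,\aj\bullet\hat\ak)$ yields $C_{\aj\bullet\hat\ak,\,\bj\circ\hat\bk}(\tau) = C_{\aj,\bj}(\tau)\,C_{\hat\ak,\hat\bk}(\tau)$, and combining the two displayed identities gives the desired factorization.

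The main obstacle — really the only real content of the argument — is this position-tracking identity $\widehat{\at} = \aj\bullet\hat\ak$. One must check that when Lemma~\ref{lema:0rec} is applied to the full sequence $\at$ at an inner index $j$ with $p < j < \hat p - 1$: the entries $\at_i \leq p$ belong to $\aj$ and lie below $j$, hence are untouched; the entries $\at_i \geq \hat p$ belong to $\aj$ (with the shift $-2(n-p)$) and lie above $j+1$, so they are decreased by $2$ globally, but in the shifted $\aj$-coordinates (where the outer length also drops from $2n$ to $2(n-1)$, i.e.\ $\hat p$ drops to $\hat p - 2$) this shift is absorbed and $\aj$ is unchanged; and the entries $\at_i$ with $p < \at_i < \hat p$ belong to $\ak$ and are transformed exactly by the rule Lemma~\ref{lema:0rec} applies to $\ak$ on its own. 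An analogous bookkeeping shows $d(\bt) - d(\bj\circ\hat\bk) = d(\bk) - d(\hat\bk)$, since peeling an inner arch only modifies the inner part of the Young diagram. These verifications are tedious but elementary, and once dispatched the inductive step closes without further work.
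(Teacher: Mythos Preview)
Your proof is correct and follows essentially the same route as the paper's: both arguments repeatedly apply the recursion of Lemma~\ref{lema:0rec} to peel off the inner small arches one at a time, observing that each peel contributes the same factor $[s]$ to $C_{\at,\bt}$ as to $C_{\ak,\bk}$ and leaves the outer data $(\aj,\bj)$ untouched. The paper compresses this into the one sentence ``the algorithm is independent of all details,'' whereas you unroll it as an explicit induction on $|\bk|$ and spell out the bookkeeping $\widehat{\at}=\aj\bullet\hat\ak$; the substance is the same.
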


\begin{proof}

On one hand, if we apply the algorithm $n$ times, we get:
\[
 \Phi_\at (q^\bt) = \tau^{d(\bt)}C_{\at,\bt}(\tau) .
\]

On the other hand, we can do the recursion starting with the $r$ inner variables, but the algorithm is independent of all details, so:
\begin{align*}
 \Phi_\at (q^\bt) &= \tau^{d(\bt)-d(\bj)}C_{\ak,\bk}(\tau)  \Phi_\aj (q^\bj)\\
                  &= \tau^{d(\bt)}C_{\ak,\bk}(\tau)  C_{\aj,\bj}(\tau).
\end{align*}

Notice that $\aj$ is not necessarily a matching, but nevertheless the coefficient $C_{\aj,\bj}(\tau)$ is defined by $\Psi_{\aj}(q^{\bj})$ and similarly for $\ak$.

This completes the proof.
\end{proof}

Now we repeat the procedure for the case $\Phi_{\ai,-p} (z)$.
The point is that, this polynomial is proportional to $\Psi_\bk (z^{(I)})$, where the $\bk$ are of size $r$.
We then check that the algorithm of Lemma~\ref{lema:0rec} remains essentially the same, \ie 
\begin{lemma}\label{lema:prec}
 Let $\bi$ and $\ai$ be two matching as in Lemma~\ref{lema:0rec}, thus $\bi=\bt$ and $\ai=\at$.
 Let $(j,j+1)$ be a small arch on $\bk$. 
 $\hat{\bk}$, $\hat{\ak}$ and $s$ are defined as in Lemma~\ref{lema:0rec}.

 We claim that
\[
 \Phi_{\at,-p} (z_\zh,q^\bk,z_\zl) = [s] \tau^{d(\bk)-d(\hat{\bk})} \Phi_{\aj\bullet\hat{\ak},-p} (z_\zh,q^{\hat{\bk}},z_\zl).
\]
\end{lemma}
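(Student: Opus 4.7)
The plan is to mirror the proof of Lemma~\ref{lema:0rec} step by step, showing that the extra structure appearing in $\Phi_{a,-p}$ relative to $\Phi_a$ is inert under the specialization of the two inner variables $z_{p+j}, z_{p+j+1}$. First I would classify the additional ingredients of $\Phi_{a,-p}$ according to whether they involve inner variables or not. The pre-factors $\prod_{1\leq i,j\leq p}(qz_i-q^{-1}z_j)$ and $\prod_{2\leq i,j\leq p}(qz_i-q^{-1}z_{\hat\jmath})$, together with the integrand factor $\prod_{\wj=1}^p(qw_\wi-q^{-1}z_{\hat\jmath})/(qz_\wj-q^{-1}w_\wi)$, involve only outer $z$'s and integration variables, so they are spectators under the substitution $z_{p+j}=q^{-1},\,z_{p+j+1}=q$. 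The only pre-factors actually touching $z_{p+j},z_{p+j+1}$ are $\prod_{i=1}^p\prod_{j'=p+1}^{2n-p}(qz_i-q^{-1}z_{j'})$ and $\prod_{p<k<l<\hat p}(qz_k-q^{-1}z_l)$, exactly as in the proof of Lemma~\ref{lema:0rec} restricted to the inner block.

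Next I would check the vanishing/compensation mechanism. The factor $(qz_{p+j}-q^{-1}z_{p+j+1})$ in $\prod_{p<k<l<\hat p}(qz_k-q^{-1}z_l)$ becomes zero at the specialized values, so only those pole choices supplying a compensating $(qz_{p+j}-q^{-1}z_{p+j+1})^{-1}$ contribute. As before, this forces the selection of a pole $(w_\wj-z_{p+j})^{-1}$ with $\ai_\wj=p+j$; we let $s$ count the number of such $\wj$. The extra spectator factor $\prod_i(qz_i-q^{-1}z_{p+j})(qz_i-q^{-1}z_{p+j+1})$ just evaluates to a harmless polynomial in the outer variables, which will eventually match the corresponding spectator factor on the right-hand side.

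Then I would repeat the antisymmetrization argument used in Lemma~\ref{lema:0rec}. The key is that the extra integrand factors, being products over $\wi$ of a function of a single $w_\wi$, are symmetric in the $s$ selected integration variables. Hence they do not disturb the antisymmetry required to apply Lemma~\ref{lema:anti}, and the $[s]/s$ combinatorial factor appears in exactly the same way. Taking the residue at $w_\wj=z_{p+j}=q^{-1}$ and using $z_{p+j+1}=q$, all the simple ratios tabulated at the end of the proof of Lemma~\ref{lema:0rec} remain valid, and the additional integrand piece evaluated at $w_\wj=q^{-1}$ contributes $\prod_{\zj'=1}^p(1-q^{-1}z_{\widehat{\zj'}})/(qz_{\zj'}-q^{-2})$, which is literally the analogous ``spectator'' term of $\Phi_{\aj\bullet\hat{\ak},-p}(z_\zh,q^{\hat\bk},z_\zl)$ so that it reappears unchanged on the right.

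The main obstacle I expect is purely book-keeping: tracking that the two kinds of spectator factors (the outer/inner pre-factor $\prod_{i=1}^p\prod_{j'}(qz_i-q^{-1}z_{j'})$ and the new integrand quotient) evaluate on both sides to the same outer expression, once the dependence on $z_{p+j},z_{p+j+1}$ is removed on the left and added on the right with indices shifted by the removal of the small arch. Once this identification is made, the collection of $q$-factors, $(-1)$-factors and $\xi$-factors cancel exactly as in the end of the proof of Lemma~\ref{lema:0rec}, and the residual power of $\tau$ is $d(\bk)-d(\hat\bk)$ by the same count of $z=q$ entries, yielding the claimed identity. An immediate corollary, parallel to Corollary~\ref{cor:0rec}, is that iterating this recursion over all small arches of $\bk$ gives $\Phi_{\at,-p}(z_\zh,q^\bk,z_\zl)=C_{\ak,\bk}(\tau)\,\tau^{d(\bk)}\,\Phi_{\aj,-p}(z_\zh,z_\zl)$, which is what is actually used in Proposition~\ref{prop:1step}.
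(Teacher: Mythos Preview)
Your plan is the paper's own: force the pole $(w_\wj-z_{p+j})^{-1}$ via the vanishing pre-factor, apply Lemma~\ref{lema:anti} (the extra integrand factor is a product over single $w_i$'s, hence symmetric in the $s$ selected variables and harmless for the antisymmetrization), take the residue, and chase the powers of $(-q)$, $\xi$ and $\tau$. One step is mis-stated, though, and it is exactly the step you flag as the main obstacle.

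You assert that the extra integrand piece for the selected variable, evaluated at $w_\wj=q^{-1}$, namely $\prod_{j'=1}^p(1-q^{-1}z_{\widehat{j'}})/(qz_{j'}-q^{-2})$, ``is literally the analogous spectator term'' of $\Phi_{\aj\bullet\hat\ak,-p}$ and reappears on the right. It does not: on the right-hand side the factor $\prod_{j'}(qw_i-q^{-1}z_{\widehat{j'}})/(qz_{j'}-q^{-1}w_i)$ is present only for the $n-1$ surviving integration variables, so the evaluated $w_\wj$-piece is absent there. In the paper's shorthand this evaluated piece equals $\q{\zj}{\zl}/\q{\zh}{\zj}$, and it \emph{cancels}: $\q{\zj}{\zl}$ against the denominator factor $\q{\wj}{\zl}\to\q{\zj}{\zl}$ coming from $\prod_{k>a_\wj}(qw_\wj-q^{-1}z_k)$, and $\q{\zh}{\zj}$ against the identical term in the cross pre-factor. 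What is left over in the outer sector is the new ratio $\q{\zh}{\zj+1}/\um{\zj}{\zh}=(-q)^{n_\zh}=(-q)^{p}$, one more simple ratio beyond those tabulated in Lemma~\ref{lema:0rec}. This shifts the total exponent from $n_\zi-(n-1)$ to $n_\zh+n_\zi-(n-1)=n_\zi-(r-1)$ with $r=n-p$, precisely what is needed to match the size-$(n-1)$ formula. Once you make this correction the remainder of your argument, including the $\tau^{d(\bk)-d(\hat\bk)}$ count and the corollary, goes through as you describe.
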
  

The proof will follow exactly the same steps as the one for Lemma~\ref{lema:0rec}, though there are some details that differ.
We should repeat it here for sake of completeness.

\begin{proof}
 We use all definitions made on the proof of Lemma~\ref{lema:0rec}, with only one difference:
\begin{itemize}
 \item We divide the $z$ variables in four regions: $\zh \leq p$, $p<\zi<\zj$, $\zj+1<\zk<\hat{p}$ and $\zl\geq \hat{p}$.
\end{itemize}

We rewrite the expression, using the new notation and applying Lemma~\ref{lema:anti}:
\begin{align*}
 \Phi_{\at,-p} (z_\zh,q^\bk,z_\zl) &= \frac{[s]!}{s!} \sum_{\wj}^s \xi^{-r(r-1)-(p-1)^2} \q{\zh}{\zj} \q{\zh}{\zj+1} \q{\zi}{\zj} \q{\zi}{\zj+1} \q{\zj}{\zj+1} \q{\zj}{\zk} \q{\zj+1}{\zk} \\
   &\quad\times\oint \frac{dw_\wj}{2\pi i} \frac{\um{\wl}{\wk}\um{\wl}{\wj}\um{\wl}{\wi}\um{\wk}{\wk'}\um{\wk}{\wj}\um{\wk}{\wi}\um{\wk}{\wh}\um{\wj}{\wi}\um{\wj}{\wh}\um{\wi}{\wi'}\um{\wi}{\wh}}{\q{\wh}{\zj}\q{\wh}{\zj+1}\um{\wi}{\zh}\um{\wi}{\zi}\um{\wi}{\zj}\q{\wi}{\zj+1}\q{\wi}{\zk}\q{\wi}{\zk}\um{\wj}{\zh}\um{\wj}{\zi}\um{\wj}{\zj}}\\ 
   &\quad\times\frac{\q{\wk}{\wl}\q{\wj}{\wl}\q{\wi}{\wl}\um{\wk'}{\wk}\um{\wj}{\wk}\um{\wi}{\wk}\q{\wh}{\wk}\um{\wi}{\wj}\q{\wh}{\wj}\um{\wi'}{\wi}\q{\wh}{\wi}}{\q{\wj}{\zj+1}\q{\wj}{\zk}\q{\wj}{\zl}\um{\wk}{\zh}\um{\wk}{\zi}\um{\wk}{\zj}\q{\wk}{\zj+1}\q{\wk}{\zk}\q{\wk}{\zl}\um{\wl}{\zj}\um{\wl}{\zj+1}}\frac{\q{\wi}{\zl}\q{\wj}{\zl}\q{\wk}{\zl}}{\q{\zh}{\wi}\q{\zh}{\wj}\q{\zh}{\wk}}.
\end{align*} 

We integrate around $w_\wj=z_\zj$:
\begin{align*}
 \Phi_{\at,-p} (z_\zh,q^\bk,z_\zl) &= \frac{[s]!}{s!} \sum_{\wj}^s \xi^{-r(r-1)-(p-1)^2} \q{\zh}{\zj+1} \q{\zi}{\zj} \q{\zi}{\zj+1} \q{\zj+1}{\zk} \\
   &\quad\times \frac{\um{\wl}{\wk}\um{\wl}{\wi}\um{\wk}{\wk'}\um{\wk}{\wi}\um{\wk}{\wh}\um{\zj}{\wi}\um{\zj}{\wh}\um{\wi}{\wi'}\um{\wi}{\wh}}{\q{\wh}{\zj+1}\um{\wi}{\zh}\um{\wi}{\zi}\q{\wi}{\zj+1}\q{\wi}{\zk}\q{\wi}{\zk}\um{\zj}{\zh}\um{\zj}{\zi}}\\ 
   &\quad\times\frac{\q{\wk}{\wl}\q{\zj}{\wl}\q{\wi}{\wl}\um{\wk'}{\wk}\um{\zj}{\wk}\um{\wi}{\wk}\q{\wh}{\wk}\um{\wi'}{\wi}\q{\wh}{\wi}}{\um{\wk}{\zh}\um{\wk}{\zi}\q{\wk}{\zj+1}\q{\wk}{\zk}\q{\wk}{\zl}\um{\wl}{\zj+1}}\frac{\q{\wi}{\zl}\q{\wk}{\zl}}{\q{\zh}{\wi}\q{\zh}{\wk}}\\
   &= (-q)^{n_\zh+n_\zi} (-q)^{-(n-1)} [s] \xi^{-r(r-1)-(p-1)^2} \q{\zi}{\zj} \q{\zj+1}{\zk} \\
   &\quad\times \frac{\um{\wl}{\wk}\um{\wl}{\wi}\um{\wk}{\wk'}\um{\wk}{\wi}\um{\wk}{\wh}\um{\wi}{\wi'}\um{\wi}{\wh}}{\um{\wi}{\zh}\um{\wi}{\zi}\q{\wi}{\zk}\q{\wi}{\zk}}
   \frac{\q{\wk}{\wl}\q{\wi}{\wl}\q{\wk'}{\wk}\q{\wi}{\wk}\q{\wh}{\wk}\q{\wi'}{\wi}\q{\wh}{\wi}}{\um{\wk}{\zh}\um{\wk}{\zi}\q{\wk}{\zk}\q{\wk}{\zl}}\frac{\q{\wi}{\zl}\q{\wk}{\zl}}{\q{\zh}{\wi}\q{\zh}{\wk}},
\end{align*}
but $n_\zh=p$.

Now we can identify $\Phi_{\aj\bullet\hat{\ak},-p} (z_\zh,q^{\hat{\bk}},z_\zl)$: 
\begin{align*}
 \Phi_{\at,-p} (z_\zh,q^\bk,z_\zl) &= (-q)^{-(r-1)} [s] \xi^{-2(r-1)} \prod_\zi (q^{-1} -q^2 z_\zi) \prod_\zk (q^2-q^{-1} z_\zk)\\
 &\quad\times  \Phi_{\aj\bullet\hat{\ak},-p} (z_\zh,q^{\hat{\bk}},z_\zl)\\
 &= [s] \tau^{d(\bk)-d(\hat{\bk})}  \Phi_{\aj\bullet\hat{\ak},-p} (z_\zh,q^{\hat{\bk}},z_\zl).
\end{align*}
\end{proof}

Now, we know that $\Phi_{\ai,-p} (z_1,\ldots,z_{2n})$ lives at the vector space $\mathcal{V}_r$ if we ignore the outer variables, thus we can write it as linear combination of the $\Psi_\bk (z_{p+1},\ldots,z_{\hat{p}-1})$ and the coefficients will be a function on $\{z_1,\dots,z_p,z_{\hat{p}},\ldots,z_{\hat{1}}\}$.

Using Lemma~\ref{lema:prec}, we can construct these coefficients:
\begin{cor}\label{cor:prec}
Using the last lemma in the inner part we get:
\[
  \Phi_{\at}(z_\zh,q^\bk,z_\zl)=\tau^{d(\bk)} C_{\ak,\bk}(\tau) \Phi_\aj (z_\zh,z_\zl).
\]
\end{cor}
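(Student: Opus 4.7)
My plan is to derive the identity by iterating Lemma~\ref{lema:prec} exactly $r=|\bk|$ times, in lockstep with the algorithm for $C_{\ak,\bk}(\tau)$ indicated in the discussion following Lemma~\ref{lema:0rec}.

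At every step I pick a small arch $(\zj,\zj+1)$ of the current inner matching and invoke Lemma~\ref{lema:prec} to rewrite
\[
\Phi_{\at,-p}(z_\zh,q^\bk,z_\zl)=[s]\,\tau^{d(\bk)-d(\hat{\bk})}\,\Phi_{\aj\bullet\hat{\ak},-p}(z_\zh,q^{\hat{\bk}},z_\zl),
\]
with $(\hat{\ak},\hat{\bk},s)$ defined exactly as in Lemma~\ref{lema:0rec}. Crucially, the outer part $\aj$ is untouched during the reduction, since Lemma~\ref{lema:prec} modifies only entries of $\ak$ corresponding to inner positions $p<\cdot<\hat{p}$. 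After $r$ iterations $\bk$ has been completely emptied and the right-hand side reduces to $\Phi_{\aj,-p}(z_\zh,z_\zl)$, which is the outer-variable polynomial denoted $\Phi_\aj(z_\zh,z_\zl)$ in the statement of the corollary.

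It then remains to collect the prefactors. The $\tau$-exponents telescope to $\tau^{d(\bk)-d(\emptyset)}=\tau^{d(\bk)}$, while the scalar factors $[s_1][s_2]\cdots[s_r]$ collected along the way are precisely those produced by the algorithm of Lemma~\ref{lema:0rec} applied to the pair $(\ak,\bk)$; by the discussion immediately after Lemma~\ref{lema:0rec}, their product equals $C_{\ak,\bk}(\tau)$. Substituting yields the desired identity.

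The one point that genuinely requires care is the well-definedness of the iteration, \ie that the accumulated product $\prod_i [s_i]$ does not depend on the order in which one processes the small arches of $\bk$. This is however automatic: the very same iteration, run without the outer variables, computes $\tau^{-d(\bk)}\Phi_\ak(q^\bk)=C_{\ak,\bk}(\tau)$ by Lemma~\ref{lema:0rec}, and that quantity is intrinsic. The remainder of the argument is pure bookkeeping of prefactors.
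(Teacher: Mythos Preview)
Your argument is correct and is precisely the ``straightforward'' proof the paper has in mind: iterate Lemma~\ref{lema:prec} $r=|\bk|$ times, telescope the $\tau$-powers to $\tau^{d(\bk)}$, and identify the accumulated product $\prod_i[s_i]$ with $C_{\ak,\bk}(\tau)$ via the recursive description of $C$ given after Lemma~\ref{lema:0rec}. Your remark on order-independence is a nice touch, and your identification of the residual polynomial with the outer-variable object $\Phi_{\aj,-p}(z_\zh,z_\zl)$ (written $\Phi_\aj(z_\zh,z_\zl)$ in the corollary) is exactly how it is used in Proposition~\ref{prop:1step}.
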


The proof is straightforward.

\bibliography{DN}

\providecommand{\bysame}{\leavevmode\hbox to3em{\hrulefill}\thinspace}
\begin{thebibliography}{10}

\bibitem{BeckRobins}
M.~Beck and S.~Robins, \emph{Computing the continuous discretely},
  Undergraduate Texts in Mathematics, Springer, New York, 2007, Integer-point
  enumeration in polyhedra. \MR{MR2271992}

\bibitem{ProofRS}
L.~Cantini and A.~Sportiello, \emph{Proof of the {R}azumov--{S}troganov
  conjecture}, Journal of Combinatorial Theory, Series A \textbf{118} (2011),
  1549–1574,
  \href{http://arxiv.org/abs/1003.3376v1}{\path{arXiv:1003.3376v1}}.

\bibitem{CKLN}
F.~Caselli, C.~Krattenthaler, B.~Lass, and P.~Nadeau, \emph{On the number of
  fully packed loop configurations with a fixed associated matching}, Electron.
  J. Combin. \textbf{11} (2004), no.~2, Research paper 16,
  \url{http://www.combinatorics.org/Volume_11/Abstracts/v11i2r16.html}.

\bibitem{dG-review}
J.~de~Gier, \emph{Loops, matchings and alternating-sign matrices}, Discrete
  Math. \textbf{298} (2005), no.~1-3, 365--388,
  \href{http://arxiv.org/abs/math/0211285}{\path{arXiv:math/0211285}},
  \href{http://dx.doi.org/10.1016/j.disc.2003.11.060}{\path{doi}}.
  \MR{MR2163456}

\bibitem{Lascoux-KL-M}
J.~de~Gier, A.~Lascoux, and M.~Sorrell, \emph{Deformed {K}azhdan-{L}usztig
  elements and {M}acdonald polynomials}, Jul 2010,
  \href{http://arxiv.org/abs/1007.0861}{\path{arXiv:1007.0861}}.

\bibitem{DF-qKZ-TSSCPP}
P.~Di~Francesco, \emph{Totally symmetric self-complementary plane partitions
  and the quantum {K}nizhnik--{Z}amolodchikov equation: a conjecture}, J. Stat.
  Mech. Theory Exp. (2006), no.~9, P09008, 14 pp. \MR{MR2278472}

\bibitem{artic31}
P.~Di~Francesco and P.~Zinn-Justin, \emph{Around the {R}azumov--{S}troganov
  conjecture: proof of a multi-parameter sum rule}, Electron. J. Combin.
  \textbf{12} (2005), Research Paper 6, 27 pp,
  \url{http://www.combinatorics.org/Volume_12/Abstracts/v12i1r6.html}.
  \MR{MR2134169}

\bibitem{artic43}
\bysame, \emph{Quantum {K}nizhnik--{Z}amolodchikov equation: reflecting
  boundary conditions and combinatorics}, J. Stat. Mech. Theory Exp. (2007),
  no.~12, P12009, 30 pp,
  \href{http://arxiv.org/abs/0709.3410}{\path{arXiv:0709.3410}},
  \href{http://dx.doi.org/10.1088/1742-5468/2007/12/P12009}{\path{doi}}.
  \MR{MR2367185}

\bibitem{artic41}
\bysame, \emph{Quantum {K}nizhnik--{Z}amolodchikov equation, totally symmetric
  self-complementary plane partitions and alternating sign matrices}, Theor.
  Math. Phys. \textbf{154} (2008), no.~3, 331--348,
  \href{http://arxiv.org/abs/math-ph/0703015}{\path{arXiv:math-ph/0703015}},
  \href{http://dx.doi.org/10.1007/s11232-008-0031-x}{\path{doi}}.

\bibitem{tese}
T.~Fonseca, \emph{Alternating sign matrices, completely packed loops and plane
  partitions}, Ph.D. thesis, Université Pierre et Marie Curie, 2010,
  \url{http://tel.archives-ouvertes.fr/tel-00521884/fr/}.

\bibitem{negative}
T.~Fonseca and P.~Nadeau, \emph{On some polynomials enumerating fully packed
  loop configurations}, Advances in Applied Mathematics \textbf{In Press,
  Corrected Proof} (2010), --,
  \href{http://arxiv.org/abs/1002.4187}{\path{arXiv:1002.4187}},
  \href{http://dx.doi.org/10.1016/j.aam.2010.11.003}{\path{doi}}.

\bibitem{artic45}
T.~Fonseca and P.~Zinn-Justin, \emph{On the doubly refined enumeration of
  alternating sign matrices and totally symmetric self-complementary plane
  partitions}, Electron. J. Combin. \textbf{15} (2008), Research Paper 81, 35
  pp, \href{http://arxiv.org/abs/0803.1595}{\path{arXiv:0803.1595}}.
  \MR{MR2411458}

\bibitem{artic47}
\bysame, \emph{On some ground state components of the {O}(1) loop model},
  Journal of Statistical Mechanics: Theory and Experiment \textbf{2009} (2009),
  no.~03, P03025 (29pp),
  \href{http://arxiv.org/abs/0901.1679}{\path{arXiv:0901.1679}},
  \href{http://dx.doi.org/10.1088/1742-5468/2009/03/P03025}{\path{doi}}.

\bibitem{FR-qkz}
I.~Frenkel and N.~Reshetikhin, \emph{Quantum affine algebras and holonomic
  difference equations}, Commun. Math. Phys. \textbf{146} (1992), 1--60,
  \url{http://projecteuclid.org/euclid.cmp/1104249974}.

\bibitem{Masao-tsscpp1}
M.~Ishikawa, \emph{On refined enumerations of totally symmetric
  self-complementary plane partitions {I}}, 2007,
  \href{http://arxiv.org/abs/math/0602068}{\path{arXiv:math/0602068}}.

\bibitem{Kup-ASM}
G.~Kuperberg, \emph{Another proof of the alternating-sign matrix conjecture},
  Internat. Math. Res. Notices (1996), no.~3, 139--150,
  \href{http://arxiv.org/abs/math/9712207}{\path{arXiv:math/9712207}},
  \href{http://dx.doi.org/10.1155/S1073792896000128}{\path{doi}}.
  \MR{MR1383754}

\bibitem{MR1954236}
\bysame, \emph{Symmetry classes of alternating-sign matrices under one roof},
  Ann. of Math. (2) \textbf{156} (2002), no.~3, 835--866,
  \href{http://arxiv.org/abs/math/0008184}{\path{arXiv:math/0008184}}.
  \MR{MR1954236}

\bibitem{RS-conj}
A.~Razumov and Yu. Stroganov, \emph{Combinatorial nature of the ground-state
  vector of the {$O(1)$} loop model}, Teoret. Mat. Fiz. \textbf{138} (2004),
  no.~3, 395--400,
  \href{http://arxiv.org/abs/math/0104216}{\path{arXiv:math/0104216}},
  \href{http://dx.doi.org/10.1023/B:TAMP.0000018450.36514.d7}{\path{doi}}.
  \MR{MR2077318}

\bibitem{Smi}
F.~Smirnov, \emph{A general formula for soliton form factors in the quantum
  sine--{G}ordon model}, J. Phys. A \textbf{19} (1986), L575--L578.

\bibitem{wieland}
B.~Wieland, \emph{A large dihedral symmetry of the set of alternating sign
  matrices}, Electron. J. Combin. \textbf{7} (2000), Research Paper 37, 13 pp,
  \url{http://www.combinatorics.org/Volume_7/Abstracts/v7i1r37.html}.
  \MR{MR1773294}

\bibitem{Nathan}
N.~Williams, \emph{An alternating sum of alternating sign matrices},
  Rose-Hulman Undergraduate Journal \textbf{9} (2008), no.~2,
  \url{http://www.rose-hulman.edu/mathjournal/archives/2008/vol9-n2/paper10/v9n2-10p.pdf}.

\bibitem{Zeil-ASM}
D.~Zeilberger, \emph{Proof of the alternating sign matrix conjecture},
  Electron. J. Combin. \textbf{3} (1996), no.~2, Research Paper 13, 84 pp, The
  Foata Festschrift,
  \url{http://www.combinatorics.org/Volume_3/Abstracts/v3i2r13.html}.
  \MR{MR1392498}

\bibitem{Zeil-qKZ}
\bysame, \emph{Proof of a conjecture of {P}hilippe {D}i {F}rancesco and {P}aul
  {Z}inn-{J}ustin related to the {$q$KZ} equations and to {D}ave {R}obbins' two
  favorite combinatorial objects}, 2007,
  \url{http://www.math.rutgers.edu/~zeilberg/mamarim/mamarimhtml/diFrancesco.html}.

\bibitem{Zeil-ASM-ref}
Doron Zeilberger, Abstract Mills, Rumsey Conjectured, and Zeilberger Proved,
  \emph{Proof of the refined alternating sign matrix conjecture}, Journal of
  Mathematics, 1996, pp.~59--68,
  \href{http://arxiv.org/abs/math/9606224}{\path{arXiv:math/9606224}}.

\bibitem{hdr}
P.~Zinn-Justin, \emph{Six-vertex, loop and tiling models: integrability and
  combinatorics}, LAP Lambert Academic Publishing, 2010,
  \href{http://arxiv.org/abs/0901.0665}{\path{arXiv:0901.0665}}.

\bibitem{Zuber-conj}
J.-B. Zuber, \emph{On the {C}ounting of {F}ully {P}acked {L}oop
  {C}onfigurations: {S}ome new conjectures}, Electron. J. Combin. \textbf{11}
  (2004), no.~1, Research paper 13,
  \url{http://www.combinatorics.org/Volume_11/Abstracts/v11i1r13.html}.

\end{thebibliography}
\bibliographystyle{amsplainhyper}
\end{document}